\definecolor{newblue}{rgb}{0.2, 0.3, 0.85}
\def\newaliasedtheorem#1[#2]#3{
  \newaliascnt{#1@alt}{#2}
  \newtheorem{#1}[#1@alt]{#3}
  \expandafter\newcommand\csname #1@altname\endcsname{#3}
}
\numberwithin{equation}{section}
\newtheoremstyle{slanted}{\topsep}{\topsep}{\slshape}{}{\bfseries}{.}{.5em}{}
\theoremstyle{plain}
\newtheorem{theorem}{Theorem}[section]
\theoremstyle{definition}
\theoremstyle{remark}
\newcommand{\N}{\mathbb{N}}
\newcommand{\R}{\mathbb{R}}
\newcommand{\X}{{\rm X}}
\newcommand{\lip}{{\rm lip \,}}
\newcommand{\nchi}{{\raise.3ex\hbox{\(\chi\)}}}
\newcommand{\eps}{\varepsilon}
\let\phi\varphi
\newcommand{\abs}[1]{\left\lvert#1\right\rvert}
\newcommand{\st}{\ensuremath{\ :\ }} 
\newcommand{\eqdef}{\ensuremath{\vcentcolon=}}
\DeclareMathOperator{\Lip}{Lip}
\newcommand{\haus}{\mathcal{H}}
\newcommand{\dist}{\mathsf{d}}
\newcommand{\meas}{\mathfrak{m}}
\newcommand{\vol}{\mathcal{H}^n}
\DeclareMathOperator{\esssup}{ess sup}
\newfont{\tmpf}{cmsy10 scaled 2500}
\newcommand{\de}{\ensuremath{\,\mathrm d}} 
\author[G. Antonelli]{Gioacchino Antonelli}
\address{Department of Mathematics, University of Notre Dame, Hurley Hall, 255 Hurley, Notre Dame, IN 46556, United States}
\email{gantonel@nd.edu}
\author[M. Fogagnolo]{Mattia Fogagnolo}
\address{Dipartimento di Matematica Tullio Levi-Civita, Universit\`a di Padova, via Trieste 63, 35010 Padova, Italy}
\email{mattia.fogagnolo@unipd.it}
\author[S. Nardulli]{Stefano Nardulli}
\address{Centro de Matemática Cognição e Computação, Universidade Federal do ABC, Av. Dos Estados 5001, Santo André, SP, Brazil}\email{stefano.nardulli@ufabc.edu.br}
\author[M. Pozzetta]{Marco Pozzetta}
\address{Dipartimento di Matematica, Politecnico di Milano, Via Bonardi 9, 20133 Milano, Italy}
\email{marco.pozzetta@polimi.it}
\subjclass{Primary: 49Q20, 49J45, 53A35. Secondary: 53C23, 49J40.}
\keywords{Isoperimetric problem, curvature bounded below, scalar curvature, positive mass theorem}
\begin{document}
\title[Positive mass and isoperimetry for continuous metrics]{Positive mass and isoperimetry for continuous metrics with nonnegative scalar curvature}

\medskip

\begin{abstract}
This paper deals with quasi-local isoperimetric versions of the positive mass theorem on $3$-manifolds endowed with continuous complete metrics having nonnegative scalar curvature in a suitable weak sense. As a corollary, we derive existence results for isoperimetric sets in such low regularity setting. Our main tool is a new local version of the weak inverse mean curvature flow enjoying $C^0$-stable quantitative estimates.
\end{abstract}

\maketitle

\tableofcontents

\vspace{-0.7cm}

\section{Introduction}

\subsection{Main results}
The classical positive mass theorem (PMT) in $3$ dimensions, originally due to Schoen--Yau \cite{SchoenYauPMT}, asserts that the ADM mass of a $3$-dimensional smooth complete asymptotically flat manifold with nonnegative scalar curvature is nonnegative. This seminal result has been later rediscovered, generalized, improved and exploited in a number of ways.  Without any attempt to be complete, we refer to \cite{WittenPMT,  SYDefinitivePMT, AMO21, bray-karazas-khuri-stern} for different proofs and extensions to higher dimensions, and to \cite{HuiskenIlmanen, BrayPenrose} for its main refinement, i.e., the Riemannian Penrose inequality.

In this paper we prove  quasi-local versions of the PMT for continuous metrics with nonnegative scalar curvature in the approximate sense on $3$-manifolds. We say that $(M,g)$ is a \textit{$C^0$-Riemannian manifold} if $M$ is a smooth differentiable manifold, and $g$ is a $C^0$-metric on $M$.
The following is the weak notion of nonnegative scalar curvature we adopt in our work.

\begin{definition}[$R_g\geq 0$ in the approximate sense]\label{def:NonnegativeScalInApproximate}
    Let $(M,g)$ be a complete $C^0$-Riemannian manifold without boundary, and let $\Omega\subset M$ be an open set. We say that \textit{$R_{g}\geq 0$ in the approximate sense on $\Omega$} if there exist
    smooth complete Riemannian metrics $g_j$ on $M$, such that: 
    \begin{enumerate}
        \item $g_j$ converges to $g$ locally uniformly on $M$;
        \item There exists a sequence $(\varepsilon_j)_{j\in\mathbb N}$ of positive numbers such that $\varepsilon_j\to 0$, and $R_{g_j}\geq -\varepsilon_j$ on $\Omega$.
    \end{enumerate}
\end{definition}
By a result of Gromov \cite{GromovScal} (see also \cite{BamlerGromov} for a different proof using Ricci flow) a smooth Riemannian manifold has nonnegative scalar curvature if and only if $R_g\geq 0$ holds in the approximate sense on $M$.

For a Borel set $E\subset M$, we let $|E|,P(E)$ denote the volume and the perimeter of $E$ respectively, see \cref{sec:Perimeter} for precise definitions. Let us recall that the perimeter of $E$ never charges the boundary of $M$, if any. The following quantity is a localized version of the \emph{isoperimetric mass} introduced by Huisken \cite{Huisken}. 

\begin{definition}[Quasi-local isoperimetric mass]\label{def:IsopMass}
    Let $(M, g)$ be a $C^0$-Riemannian $3$-manifold, possibly with
boundary. Then, for an open set $E\subset M$ we
define the \textit{quasi-local (isoperimetric) mass} of $E$ as
\begin{equation}\label{eqn:mQL}
\mathfrak{m}_{\mathrm{QL}}(E):=\frac{2}{P(E)}\left(|E|-\frac{P(E)^{3/2}}{6\sqrt \pi}\right).
\end{equation}
\end{definition}
Huisken's isoperimetric mass is defined by
\begin{equation}
\label{eq:isomass}
\begin{split}
\mathfrak{m}_{\mathrm{iso}}\eqdef \sup
\bigg\{
\limsup_{j\to+\infty} \mathfrak{m}_{\mathrm{QL}}(E_j)
\,\,\st\,\, 
&E_j\subset M, P(E_j)<+\infty \,\,\forall\, j,\, P(E_j)\xrightarrow[]{}+\infty
\bigg\}.    
\end{split}
\end{equation}
For asymptotically flat $3$-manifolds with nonnegative scalar curvature, the isoperimetric mass coincides with the ADM mass {whenever the boundary is minimal} \cite[Theorem 3]{JaureguiLee}, \cite[Theorem C.2]{ChodoshEichmairShiYu21} (see also \cite[Theorem 1.4]{BFMIsoperimetricRiemannianPenrose} for a proof in the sharp asymptotic regime).  

Huisken has conjectured \cite{Huisken2} that a weak (isoperimetric) PMT should hold true for continuous Riemannian metrics if one interprets the notion of nonnegative scalar curvature (shortly, $R_g\geq 0$) in an appropriate weak sense. On the other hand in \cite{JaureguiLeeUnger24IsoperimetricMass}, after the first version of this preprint appeared, the authors proved that the property of being $C^0_{\rm loc}$-asymptotic to $\mathbb R^n$ (see \cref{def:C0locasymptotic}) is enough to imply $\mathfrak{m}_{\mathrm{iso}}\geq 0$ in every dimension, without further imposing conditions on the curvature. 
The following is the precise meaning of  $C^0_{\rm loc}$-asymptoticity.
\begin{definition}\label{def:C0locasymptotic}
    Let $K\subset M$ be a compact set (possibly empty) of a $C^0$-Riemannian manifold $(M,g)$. We say that an unbounded connected component $\mathcal{E}$ of $M\setminus K$ is \textit{$C^{0}_{\mathrm{loc}}$-asymptotic to a $C^0$-Riemannian manifold $(N,h)$} if the following holds. For every diverging sequence $\mathcal{E}\ni p_i\to +\infty$ there exists a point $o\in N$ such that $(M,g_i,p_i)\to (N,h,o)$ in the $C^0$-sense, see \cref{def:C0locconvergence}.
\end{definition}

In this paper, we prove that in $3$-manifolds that are $C^0_{\rm loc}$-asymptotic to $\mathbb{R}^3$, the curvature condition $R\geq 0$ in fact manifests itself at large scales with the existence of arbitrarily large open sets $E$ with $\mathfrak{m}_{\mathrm{QL}}(E)\geq 0$.

\begin{theorem}\label{thm:CorMassa}
Let $(M,g)$ be a complete 3-dimensional $C^0$-Riemannian manifold without boundary. Let $K\subset M$ be a compact set, and let $\mathcal{E}$ be an unbounded connected component $M\setminus K$. Assume that $\mathcal{E}$ is $C^0_{\mathrm{loc}}$-asymptotic to $\mathbb R^3$, see \cref{def:C0locasymptotic}, and that $R_{g}\geq 0$ in the approximate sense on $\mathcal{E} \setminus K'$, see \cref{def:NonnegativeScalInApproximate}, where $K'\subset M$ is a compact set.
Then for any ${C}>0$, there exists $E \subset \mathcal{E} \setminus K'$ such that
\begin{equation}
\label{eq:quasilocalbig}
\min\{P(E),|E|\}\ge {C} ,
\qquad
\mathfrak{m}_{\rm QL}(E) \ge 0.
\end{equation}
\end{theorem}

Of course \eqref{eq:quasilocalbig} directly implies $\mathfrak{m}_{\mathrm{iso}}\geq 0$. 
More importantly, while the latter has been recently shown to hold in asymptotically flat manifolds regardless of any curvature condition \cite{JaureguiLeeUnger24IsoperimetricMass}, it is not difficult to construct asymptotically flat $3$-manifolds $(M, g)$ such that $\mathfrak{m}_{\rm QL}(E) < 0$ for any bounded $E \subset M$. A general class of such manifolds is given by Cartan--Hadamard asymptotically flat $3$-manifolds with strictly negative sectional curvatures. In fact, any bounded domain sitting in such a space is known to satisfy the sharp Euclidean isoperimetric inequality \cite{kleiner-isoperimetric} with strict inequality.

\smallskip

The same strategy used to prove \cref{thm:CorMassa} also yields a quasi-local isoperimetric PMT for \emph{small} sets, without any asymptotic condition.

\begin{theorem}
\label{thm:small}
Let $(M,g)$ be a complete 3-dimensional $C^0$-Riemannian manifold without boundary. Let $\Omega \subset M$ be an open subset such that $R_g\ge0$ in the approximate sense on $\Omega$.
Then for any $o \in \Omega$ there exists $r_o>0$ such that for any $r \in (0,r_o]$ there exists $E \subset B_r(o)$ such that $\mathfrak{m}_{\rm QL}(E) \ge 0$.
\end{theorem}

As already suggested by Huisken \cite{Huisken2}, the nonnegativity of the isoperimetric mass should be compared with the following asymptotic formula, that can be directly deduced from \cite[Theorem 3.1]{gray}: there exists a positive constant $c$ such that for any smooth $3$-dimensional Riemannian manifold $(M, g)$, and any $o \in M$, it holds 
\begin{equation}
\label{eq:asy-scalar}
{R_g}(o) =  \lim_{r \to 0^+} \frac{c}{r^{5}} \left(\abs{B_r(o)} - \frac{P(B_r(o))^{3/2}}{6\sqrt{\pi}} \right).
\end{equation}
The latter \eqref{eq:asy-scalar} implies that, if ${R}_g(o) > 0$, then $\mathfrak{m}_{\mathrm{QL}} (B_r(o)) > 0$ for sufficiently small $r$. However, this asymptotic reasoning does not provide a method to construct sets with nonnegative quasi-local mass in a smooth manifold with nonnegative scalar curvature, let alone sets {for which the condition $\mathfrak{m}_{\mathrm{QL}} \geq 0$ remains stable} under $C^0$-limits. This is what \cref{thm:small} instead provides in dimension $3$. Taking into account \eqref{eq:asy-scalar} 
we wonder whether the existence of arbitrarily small sets with nonnegative quasi-local mass could in fact characterize nonnegative scalar curvature. If this were the case, it would yield a new characterization of nonnegative scalar curvature that by \cref{thm:small} would be stable with respect to $C^0$-limits of smooth manifolds.

Both \cref{thm:CorMassa} and \cref{thm:small} are new even in the smooth setting, where they were known only under additional asymptotic assumptions, ensuring the existence of a global, proper weak inverse mean curvature flow \cite{HuiskenIlmanen}. For general conditions under which such a flow exists, see \cite{KaiXu}.
In this case, the sets with nonnegative quasi-local mass are given by domains evolving through the inverse mean curvature flow, as a consequence of a reverse isoperimetric inequality one could infer from the work of Shi \cite{Shi}. Similarly, our proof will rely on both a new localized version of the weak inverse mean curvature flow, thus allowing to drop the asymptotic assumptions, and on quantitative estimates that are stable for $C^0$-limits, under a lower bound on the scalar curvature. 
\smallskip

The nonnegativity of the isoperimetric quasi-local mass for both small and large sets also has implications on the existence of isoperimetric regions. We show that in the $C^0$-setting with nonnegative scalar curvature, isoperimetric sets exist for sequences of both small and large volumes. The result below should be compared with \cite[Proposition K.1]{CarlottoChodoshEichmair}.

\begin{theorem}\label{thm:CorIsop}
Let $(M,g)$ be a complete 3-dimensional $C^0$-Riemannian manifold without boundary, and assume that $R_{g}\geq 0$ in the approximate sense on $M\setminus \mathcal{C}$, where $\mathcal{C}$ is a compact set, see \cref{def:NonnegativeScalInApproximate}. Assume in addition that $M$ is $C^0_{\mathrm{loc}}$-asymptotic to $\mathbb R^3$, see \cref{def:C0locasymptotic}.

Then there exists on $M$ a sequence of isoperimetric sets $(E_j)_{j\in\mathbb N}$ such that $|E_j|\to +\infty$ and a sequence of isoperimetric sets $(F_j)_{j\in\mathbb N}$ such that $|F_j|\to 0$. 
\end{theorem}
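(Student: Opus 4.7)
The strategy is to combine the nonnegativity of the isoperimetric mass provided by \cref{thm:CorMassa} with a concentration-compactness / generalized existence analysis for perimeter-minimizing sequences in the $C^0$-Riemannian setting.

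First, I would apply \cref{thm:CorMassa} to $(M,g)$ (taking the set $K$ empty and $K' = \mathcal{C}$), obtaining $\mathfrak{m}_{\mathrm{iso}} \geq 0$. Unfolding \cref{def:IsopMass}, this yields a sequence of finite-perimeter sets $\{\Omega_j\} \subset M$ with $P(\Omega_j) \to +\infty$ and
$$
P(\Omega_j) \leq (36\pi |\Omega_j|^2)^{1/3} + o(|\Omega_j|^{1/3}).
$$
Setting $V_j := |\Omega_j| \to +\infty$, this yields the upper bound $I_M(V_j) \leq (36\pi V_j^2)^{1/3} + o(V_j^{1/3})$ on the isoperimetric profile $I_M$ of $(M,g)$ along the sequence $V_j$.

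Next, I would invoke a generalized existence theorem for isoperimetric regions in the $C^0$-Riemannian setting, obtained by approximating $g$ by smooth metrics $g_k \to g$ (as in \cref{def:NonnegativeScalInApproximate}) and exploiting stability of the isoperimetric problem under $C^0$-local convergence. For each volume $V$, this decomposes any perimeter-minimizing sequence into a \emph{captured} portion, converging to an isoperimetric region $F \subset M$ of volume $|F| = V - V_{\infty}$, and a \emph{diverging} portion of volume $V_{\infty} \geq 0$, modeled---by the $C^0_{\mathrm{loc}}$-asymptotic flatness of $(M,g)$---on a single Euclidean ball escaping to infinity. Hence
$$
I_M(V) = P_g(F) + (36\pi V_{\infty}^2)^{1/3},
$$
and an isoperimetric region of volume $V$ exists in $M$ if and only if $V_{\infty} = 0$. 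Applying this at each $V_j$, if $V_{\infty,j} = 0$ for infinitely many $j$ the conclusion follows. Otherwise, $F_j$ is an isoperimetric region of volume $V_{1,j} := V_j - V_{\infty,j} > 0$, and it suffices to show $V_{1,j} \to +\infty$ along a subsequence. Combining the upper bound from the first step with the Euclidean asymptotic lower bound $I_M(V) \geq (36\pi V^2)^{1/3}(1-o(1))$ at large $V$---which follows from the $C^0_{\mathrm{loc}}$-asymptotic flatness via a covering argument---and the strict subadditivity of $t \mapsto t^{2/3}$ on $(0,1)$, one forces $V_{\infty,j}/V_j \to 0$ or $V_{\infty,j}/V_j \to 1$ along subsequences; the first case directly gives $V_{1,j} \to +\infty$.

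The main obstacle is establishing the concentration-compactness / generalized existence theorem in the $C^0$-Riemannian setting, as this requires adapting the Ritor\'e--Rosales and Nardulli-type arguments to handle merely continuous metrics and the $C^0$-local convergence of the approximating smooth metrics, in particular ensuring lower semicontinuity of perimeter along the relevant limits. A further delicate point is to exclude the residual scenario $V_{\infty,j}/V_j \to 1$ with $V_{1,j}$ bounded, by iterating the concentration-compactness decomposition on a refined sequence of admissible competitors and leveraging that the isoperimetric profile cannot remain strictly below its Euclidean counterpart by a sublinear-in-$V^{1/3}$ amount when the captured volume $V_{1,j}$ stays bounded.
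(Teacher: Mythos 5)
Your overall architecture (mass decomposition into a captured isoperimetric piece plus a diverging Euclidean ball, then an asymptotic comparison of profiles) matches the paper's use of its Theorem~\ref{thm:MassDecompositionC0}, and that theorem is indeed established in the $C^0$ setting, so the obstacle you flag there is surmountable. The fatal problem is the endgame. Under the hypothesis you must rule out --- no isoperimetric sets of volume larger than some $v_0$ --- the decomposition theorem forces the captured volume $V_{1,j}\le v_0$ for \emph{every} large $V_j$, so your ``first case'' $V_{\infty,j}/V_j\to 0$ never occurs and the entire proof rests on the residual scenario, which you do not resolve. And it cannot be resolved with the information you are using: $\mathfrak m_{\rm iso}\ge 0$ only yields competitors with $|\Omega_j|\ge \frac{1}{6\sqrt\pi}P(\Omega_j)^{3/2}-\eps_j P(\Omega_j)$, hence the one-sided bound $I(V_j)\le (36\pi V_j^2)^{1/3}+o(V_j^{1/3})$; meanwhile, in the non-existence scenario the decomposition gives $I(V)=P(E)+(36\pi(V-|E|)^2)^{1/3}$ with $|E|\le v_0$, so $(36\pi V^2)^{1/3}-CV^{-1/3}\le I(V)\le (36\pi V^2)^{1/3}$. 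These two statements are perfectly compatible, so no contradiction can be extracted from profile asymptotics alone. (Separately, your asserted lower bound $I(V)\ge(36\pi V^2)^{1/3}(1-o(1))$ ``via a covering argument'' is unjustified: coverings by fixed-scale almost-Euclidean charts only give a non-sharp isoperimetric constant, as in Lemma~\ref{lem:TechnicalLemmaC0Manifold}(3); in the paper this bound is instead a consequence of the decomposition under the contradiction hypothesis.)

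The missing idea is that one must use the scalar curvature hypothesis at full strength rather than through the weaker corollary $\mathfrak m_{\rm iso}\ge 0$. Proposition~\ref{prop:GrossiQuantoVoglio} (via the localized weak IMCF and Geroch monotonicity) produces, \emph{outside any prescribed compact set}, a set $F$ with prescribed perimeter scale satisfying the \emph{exact} reverse inequality $|F|\ge \frac{1}{6\sqrt\pi}P(F)^{3/2}$, with no $o(1)$ error. The paper then shows (again from the decomposition, via a Dini-derivative estimate and the continuity of $I$ from Corollary~\ref{cor:Continuity}) that non-existence forces $I$ to be strictly increasing on $(2v_0,\infty)$, takes the decomposition $I(v)=P(E)+P_{\rm eu}(B)$, replaces the escaping ball $B$ by such an $F$ disjoint from $E$ with $P(F)\le P_{\rm eu}(B)$ and $|F|\ge|B|_{\rm eu}$, and concludes $I(|E\cup F|)\ge I(v)\ge P(E)+P(F)\ge I(|E\cup F|)$, so $E\cup F$ is isoperimetric of large volume --- a contradiction. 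The error-free inequality and the possibility of placing $F$ away from $E$ are exactly what make this chain close; an inequality with an $o(P)$ defect, which is all that $\mathfrak m_{\rm iso}\ge 0$ provides, does not suffice.
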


Very much like \cref{thm:CorMassa}, also \cref{thm:CorIsop} strongly weakens the asymptotic assumptions for the  existence of isoperimetric sets even in the smooth setting.
The literature on the subject is very vast, in particular in relation to the study of canonical foliations of stable CMC compact hypersurfaces:
we refer to the seminal \cite{HuiskenYauFoliations} and to \cite{CarlottoChodoshEichmair, eichmair-metzger_jdg, EichmairMetzger, nerz, ChodoshEichmairShiYu21, yu, eichmair-koerber, sinestrari-tenan}, as well as to the references therein, for a fairly complete picture.  

In the setting of \cref{thm:CorIsop} it is an interesting open problem to analyze existence of isoperimetric sets for \textit{any}  volume, uniqueness or foliation properties of such isoperimetric sets, as in \cite{nerz, ChodoshEichmairShiYu21, yu} (see also the survey \cite{BFSurvey}). Under the additional assumption that the isoperimetric profile $I$ is strictly increasing, we can actually prove existence of isoperimetric sets for every volume, see \cref{prop:profileincreasing}.

\bigskip

\subsection{Strategy}\label{sec:Strategy}
We briefly discuss the strategy of the proof of \cref{thm:CorMassa}, and \cref{thm:CorIsop}, referring the reader to \cref{sec:IMCFWithGlobalIsoperimetric}, and \cref{sec:Proofs} for the details. 

The starting point in the proof of \cref{thm:CorMassa} and \cref{thm:small} is the following consequence of a result due to Shi \cite{Shi}, after important insights by Brendle--Chodosh \cite{brendle-chodosh}: in a smooth $3$-dimensional asymptotically flat manifold with nonnegative scalar curvature the level sets of the weak inverse mean curvature flow (shortly, IMCF, see \cref{def:WeakIMCFLevelSet}) issuing from a point satisfy a reverse Euclidean isoperimetric inequality with the sharp constant, as long as their boundaries are connected.
Hence the isoperimetric deficit in \eqref{eqn:mQL} is nonnegative when computed on such sets, and this directly provides examples of arbitrarily large regions with nonnegative quasi-local mass.

We will push this heuristic to show that on the approximating manifolds $(M,g_j)$ one can define a well-behaved local weak IMCF $w_j$ on punctured balls $B_j$. This is done by taking scaled limits of the logarithms of $p$-Green functions on such balls, for $p\to 1^+$, as pioneered by R. Moser \cite{MoserIMCF}. In fact, we build on the sharp gradient estimate obtained in \cite{kotschwar_localgradientestimatesharmonic_2009}, and on the Harnack inequality with explicit constants in \cite{salvatori-rigoli-vignati} to get such functions $w_j$ with bounds from below only in terms of constants that stay bounded in $j$ when the $B_j$'s are $C^0$-close enough to a Euclidean ball. In general, these estimates will be in force for balls $B_j$ that are small enough, while under additional $C^0_{\rm loc}$-asymptotics, they can be obtained for arbitrarily large balls $B_{R_j}(p_j)$ with $R_j \to \infty$ and $p_j \to \infty$.  This is in fact the content of the quantitative existence result for local IMCFs  \cref{thm:existenceimcfisoplocalized}. The latter finally allows to pass to the limit the level sets of these IMCFs, obtaining the sets satisfying the (sharp) reverse Euclidean isoperimetric inequality (or equivalently with nonnegative quasi-local mass) claimed in \cref{thm:CorMassa} and \cref{thm:small}, respectively. 
\medskip

The proof of the existence of isoperimetric sets in \cref{thm:CorIsop} is not constructive, and it is based on a contradiction argument. 
One notices that if 
after a certain volume threshold isoperimetric sets do not exist,
then the isoperimetric profile is strictly increasing for large volumes: this is a consequence of a generalized existence theorem for the isoperimetric problem, see \cref{thm:MassDecompositionC0}.
In addition, arguing as in the previous paragraph, we construct sets that satisfy the reverse sharp Euclidean isoperimetric inequality with arbitrarily large volumes and perimeters, and that avoid any fixed compact set. This is enough to show, again using \cref{thm:MassDecompositionC0} and that the isoperimetric profile is increasing, that isoperimetric sets with arbitrarily large volumes must exist, thus resulting in a contradiction.
The analogous statement for small volumes follows from an easier instance of this reasoning.

\subsection{Comments and comparison with related literature}

We collect here some comments and perspectives on the main results of this paper.

\subsubsection{Other notions of weak scalar curvature bounds, and relations with the works \cite{PaulaGAFA, PaulaADM}}\label{NotionsOfWeakScalarBounds}
    Several notions of scalar curvature lower bounds for smooth manifolds endowed with $C^0$-Riemannian metrics have been proposed in the recent years. In \cite{GromovScal} Gromov has proposed a definition based on {nonexistence of suitable} small polyhedra on the manifold, see the recent works \cite{ChaoLiPolyedron, BrendleInv} motivated by this study; a definition  based on regularization through Ricci flow has been suggested by Burkhardt-Guim in \cite{PaulaGAFA}; assuming the nonnegativity of the right hand side of \eqref{eq:asy-scalar} as a replacement for nonnegative scalar curvature has been hinted at by Huisken \cite{Huisken2}. Let us compare now nonnegative scalar curvature in the approximate sense  with the notion in \cite{PaulaGAFA, PaulaADM}. 
    
  Assume that a complete $C^0$-Riemannian manifold $(M,g)$ is \textit{globally} $C^0$-asymptotic to $\mathbb R^3$ outside a compact set $K\subset M$, i.e. $M \setminus K = \R^n \setminus B$ for some ball $B$ and $g = \delta + o(1)$ at infinity, with $\delta$ the flat metric. Hence, for $\beta\in (0,1/2)$, we claim that if $R_{g}\geq 0$ in the $\beta$-weak sense \cite[Definition 2.3]{PaulaADM} on $M\setminus K$, then $R_g\geq 0$ in the approximate sense on $M\setminus K$, up to possibly enlarging $K$. Indeed, this is due to the fact that under the $C^0$-asymptotic hypothesis at infinity one can first define a Ricci-deTurck flow $(M,g_t)$ starting from $g$ which $C^0$-converges to $(M,g)$ locally uniformly by using \cite[Theorem 1.1]{SimonC0}. Then, since $R_{g}\geq 0$ in the $\beta$-weak sense on $M\setminus K$, \cite[Lemma 5.1]{PaulaADM} implies that, up to possibly enlarging $K$, $R_{g_t}\geq -o(1)$ on $M\setminus K$ as $t\to 0$, which in turn implies that $R_{g}\geq 0$ in the approximate sense on $M\setminus K$ by definition.
  In the case $M$ is a compact manifold, the previous reasoning has been explicitly recorded in \cite[Corollary 1.6]{PaulaGAFA}. {Related to this, it would be interesting to understand whether a non-compact $C^0$-manifold $(M,g)$ that has $R_g\geq 0$ in the approximate sense on $M$ admits a smooth metric $\tilde g$ with $R_{\tilde g}\geq 0$. The compact case has been settled in \cite[Corollary 1.8]{PaulaGAFA}. Taking into account the discussion above, this seems likely to hold when $(M,g)$ is, in addition, $C^0$-asymptotic to $\mathbb R^n$ globally. 

    As a consequence of the above discussion, under the stronger assumption that $(M,g)$ is globally $C^0$-asymptotically flat one gets that our theorems \ref{thm:CorMassa}, \ref{thm:small}, and \ref{thm:CorIsop} hold under the assumption that $R_{g}\geq 0$ in the $\beta$-weak sense outside a compact set, for some $\beta\in (0,1/2)$ (\cite[Definition 2.3]{PaulaADM}). 

    Finally, we point out that as of today it is not known whether the definitions in \cite{GromovScal} and \cite{PaulaGAFA} are equivalent, compare with the discussion \cite[Page 1707-1708]{PaulaGAFA}, nor any relation with Huisken's notion in \cite{Huisken2} has  been investigated yet.  

\subsection{The question of rigidity} 
In the smooth global PMT \cite{SchoenYauPMT}, one gets that the mass is zero if and only if the metric is flat. Restricting the attention to the smooth case, an easy combination of the local IMCF arguments leading to \cref{thm:small} with the rigidity triggered by the constancy of the Hawking mass along the weak IMCF \cite{HuiskenIlmanen} allows us to prove the following quasi-local counterpart.  
\begin{theorem}
\label{thm:quasilocalrigidity}
Let $(M, g)$ be a complete $3$-dimensional smooth Riemannian manifold. Suppose that ${R}_g \geq 0$ on an open set $\Omega \subset M$. If
\begin{equation}
\label{eq:quasilocalrigidity}
\sup \{\mathfrak{m}_{\rm QL} (E) \, | \, E \Subset \Omega\} \leq 0,
\end{equation}
then $\Omega$ is flat.
\end{theorem}
The result will follow from observing that \eqref{eq:quasilocalrigidity} yields 
that 
\begin{equation}\label{eqn:Hawking0}
\mathfrak{m}_{H} (\partial E_t) = 0
\end{equation}
for all the domains $\partial E_t \Subset \Omega$ evolving through the local IMCF, where $\mathfrak{m}_{H} (\partial E_t)$ denotes the Hawking mass \eqref{eqn:HawkingMass} of their boundary. We can then easily conclude invoking a rigidity argument by Huisken--Ilmanen \cite{HuiskenIlmanen}.
We observe that a counterpart of \cref{thm:quasilocalrigidity}, with \eqref{eq:quasilocalrigidity} replaced by
\begin{equation}
\label{eq:rigidity-hawking}
    \sup \{\mathfrak{m}_{H} (\partial E) \, | \, E \Subset \Omega \, \text{with } \partial E \in C^2\}  \leq 0,
\end{equation}
has been already obtained by Mondino--Templeton-Browne by different means \cite{t.browne-mondino}. In fact, we stress that having proved the existence of a weak local IMCF allows us to derive a different proof of the result in \cite{t.browne-mondino}. Indeed, \eqref{eq:rigidity-hawking} would directly imply \eqref{eqn:Hawking0}, thus allowing us to conclude using the rigidity argument of Huisken--Ilmanen as mentioned above, see also the proof of \eqref{eq:quasilocalrigidity} below.
Notice that the condition \eqref{eq:quasilocalrigidity} makes sense also for $C^0$-metrics.

We can then pose the following problem
which is likely to require the use of new techniques tailored for the $C^0$-setting, or some refined approximation procedure.

\begin{question}\label{prob:Rigidity}
Let $(M, g)$ be a complete $C^0$-manifold of dimension $3$ without boundary. Assume that $M$ has $R_g\geq 0$ in the approximate sense  in an open set $\Omega \subset M$, and that \eqref{eq:quasilocalrigidity} holds. Is it true that $g$ is smooth and flat on $\Omega$?
\end{question}
\medskip
\medskip

\textbf{Acknowledgments}. A part of this research was carried out while G.A., M. F., and M.P. were visiting S.N. at UFABC. The joyful working conditions and the hospitality are gratefully acknowledged. Another part of the work was carried out while G. A. and M. F. were participating in the workshop ``Recent Advances in Comparison Geometry'' in Hangzhou. They thank the Banff International Research Station and the Institute of Advanced Studies in Mathematics for this enriching opportunity. G.A. is grateful to Paula Burkhardt-Guim for useful discussions around the topic of the paper.
M. F. is grateful to Andrea Mondino for his interest and for having brought to his attention the paper \cite{t.browne-mondino}.
M.P. is grateful to Roberto Ladu for clarifying discussions and stimulating advice. The authors are grateful to Luca Benatti, Luciano Mari, and Kai Xu for useful discussions around the topic of this paper. They are also grateful to Virginia Agostiniani, Alessandro Carlotto, Jeff Jauregui, Dan Lee, Lorenzo Mazzieri, and Christina Sormani for their interest in this work.

G.A. has been partially supported by the AMS-Simons Travel Grant, and the NSF DMS Grant No. 2505713.
M.F. is supported by the Project ``GIANTS", funded by INdAM.
M. F. has been supported by the Project ``iNEST: Interconnected Nord-Est Innovation Ecosystem'' funded under the National Recovery and Resilience Plan (NRRP). M. F. and M. P. are members of INdAM-GNAMPA. S.N. was supported by FAPESP Auxílio Jovem Pesquisador \# 2021/05256-0, FAPESP Sprint 	\# 23/08246-0, CNPq  Bolsa de Produtividade em Pesquisa 1D \# 12327/2021-8, 23/08246-0, Geometric Variational Problems in Smooth and Nonsmooth Metric Spaces \# 441922/2023-6. M.P. is partially supported by the PRIN Project 2022E9CF89 - PNRR Italia Domani, funded by EU Program NextGenerationEU.

\section{Preliminaries}

We gather in this section a number of fundamental properties of $C^0$-Riemannian manifolds, possibly endowed with $C^0$-asymptotics. 

\subsection{Basic definitions and properties of $C^0$-metrics}

\begin{definition}[$C^0$-Riemannian manifold]
A \textit{$C^0$-Riemannian manifold} is a couple $(M,g)$, where $M$ is a smooth $n$-dimensional differentiable manifold (possibly with boundary) and $g$ is a continuous metric tensor. More precisely, in any local chart $(U,\varphi)$ on $M$, the metric tensor in local coordinates is represented by a symmetric positive definite matrix with components $g_{ij} \in C^0(\varphi(U))$.
\end{definition}

We briefly recall some metric properties of $C^0$-Riemannian manifolds $(M,g)$, and we refer the reader to \cite{burtscher-continuousmetrics, petrunin-metricgeometrytwolectures} for a more detailed discussion. As in the smooth case, the continuous Riemannian metric $g$ defines a length structure on absolutely continuous curves on $M$, which in turn gives raise to a distance $\dist$ that induces the manifold topology. We always assume that $(M,\dist)$ is a complete metric space, so that by Hopf--Rinow Theorem the distance $\dist$ is geodesic, and every closed ball is compact. 

The volume form induced by $g$ in a local chart induces integration with respect to $\mathrm{vol}:=\sqrt{\mathrm{det}\,g} \,\mathcal{L}^n$, where $\mathcal{L}^n$ denotes Lebesgue measure. It can be proved that $\mathrm{vol}=\mathcal{H}^n$, where $\mathcal{H}^n$ is the $n$-dimensional Hausdorff measure relative to $\dist$, and that $\mathcal{H}^n$ is a Radon measure. Hence $(M,\dist,\mathcal{H}^n)$ is a complete and separable metric measure space. We will often denote $|E|:=\mathcal{H}^n(E)$. In the following, if $E$ is a measurable set, integrals over $E$ are tacitly understood to be taken with respect to $\haus^n$.

Given $f\in C^{\infty}(M)$, one can define $\nabla f$ as in the smooth case by setting $g(\nabla f,X)=\mathrm{d}f(X)$ for every vector field $X$ on $M$. Hence, in a local chart $\{\partial_i\}_{i=1,\ldots,n}$, we have $|\nabla f|^2:=g(\nabla f,\nabla f)=g^{ij}\partial_i f\partial_j f$. Notice that by the classical Rademacher Theorem applied in chart, if $f\in \mathrm{Lip}_{\mathrm{loc}}(M)$, then $\nabla f$ exists $\mathcal{H}^n$-almost everywhere.

From now on we will always assume that $C^0$-Riemannian manifolds are complete. We recall that a map between metric spaces $F:(X,\dist_X)\to (Y,\dist_Y)$ is said to be \textit{$L$-biLipschitz}, with $L\geq 1$, when $L^{-1}\dist_X(a,b)\leq \dist_Y(F(a),F(b))\leq L\dist_X(a,b)$ for every $a,b\in X$.

\begin{lemma}\label{lem:MetricheVicineImplicaBilipschitz}
    Fix $n\in\N$ with $n\ge2$. Then for any $\delta>0$ there exists $\eps>0$ such that the following holds.
    Let $(M,g)$ be a $C^0$-Riemannian manifold and let $o\in M$ and $R>0$. Denote by $\dist$ the Riemannian distance on $M$. Let $N$ be a differentiable manifold and let $\Omega\subset N$ be an open set. Suppose that there exists a diffeomorphism $F:B_{10R}(o)\to \Omega$ and a $C^0$-Riemannian metric $h$ on $\Omega$ such that $|(F^* h - g)(v,v)| \le \eps g(v,v)$ for any $v \in T_xM$ and any $x \in B_{10R}(o)$.
    
    Then, letting \[\tilde\dist(x,y) \eqdef \inf\left\{ \int_0^1 |\gamma'|_h \st \gamma:[0,1]\to\Omega, \gamma(0)=x, \gamma(1)=y  \right\},\] the map $F|_{B_R(o)}:(B_R(o), \dist) \to (F(B_R(o)), \tilde\dist)$ is $(1+\delta)$-biLipschitz with its image.
\end{lemma}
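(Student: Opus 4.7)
The plan is to reduce the biLipschitz statement to a pointwise length comparison. From the hypothesis, $(1-\eps)g(v,v) \leq (F^*h)(v,v) \leq (1+\eps)g(v,v)$ for every $x \in B_{10R}(o)$ and $v \in T_xM$; taking square roots, $\sqrt{1-\eps}\,|v|_g \leq |v|_{F^*h} \leq \sqrt{1+\eps}\,|v|_g$. Hence for any absolutely continuous curve $\gamma$ whose image lies in $B_{10R}(o)$, the $g$-length of $\gamma$ and the $h$-length of $F\circ\gamma$ are comparable by factors $\sqrt{1\pm\eps}$. I will then choose $\eps=\eps(\delta)>0$ small enough that both $\sqrt{1+\eps}\leq 1+\delta$ and $1/\sqrt{1-\eps}\leq 1+\delta$.

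For the upper bound $\tilde\dist(F(x),F(y))\leq (1+\delta)\dist(x,y)$, fix $x,y\in B_R(o)$. The triangle inequality gives $\dist(x,y)<2R$, and the standing completeness assumption on $(M,\dist)$ together with Hopf--Rinow produces a minimizing $g$-geodesic $\gamma$ from $x$ to $y$ of length $\dist(x,y)$. Every point of $\gamma$ lies within distance $\dist(x,y)<2R$ from $x$, hence within distance $3R$ from $o$, so $\gamma$ stays well inside $B_{10R}(o)$. Then $F\circ\gamma$ is an admissible curve in $\Omega$ from $F(x)$ to $F(y)$, and the pointwise length comparison applied to $\gamma$ yields $\tilde\dist(F(x),F(y))\leq \sqrt{1+\eps}\,\dist(x,y)$.

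For the lower bound $\dist(x,y)\leq (1+\delta)\tilde\dist(F(x),F(y))$, I would take an arbitrary curve $\sigma:[0,1]\to\Omega$ with $\sigma(0)=F(x)$ and $\sigma(1)=F(y)$. Since $\Omega=F(B_{10R}(o))$ and $F$ is a diffeomorphism, the curve $F^{-1}\circ\sigma$ lies in $B_{10R}(o)\subset M$ and connects $x$ to $y$; the pointwise comparison gives $L_g(F^{-1}\circ\sigma)\leq (1-\eps)^{-1/2}L_h(\sigma)$. Since $\dist$ is the infimum of $g$-lengths over all curves in $M$ from $x$ to $y$, one gets $\dist(x,y)\leq (1-\eps)^{-1/2}L_h(\sigma)$, and taking the infimum over $\sigma$ concludes. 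The only mild obstacle is ensuring almost-minimizing curves on either side remain inside the domain of comparison $B_{10R}(o)$; this is precisely what the $10R$-cushion in the hypothesis affords (via the cheap triangle-inequality estimate above), and beyond that the argument is formal.
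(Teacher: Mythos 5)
Your proof is correct and follows essentially the same route as the paper's: push forward a minimizing $g$-geodesic (which stays well inside $B_{10R}(o)$) for the upper bound, and pull back (almost-)minimizing $h$-curves in $\Omega$ for the lower bound, using the pointwise $\sqrt{1\pm\eps}$ comparison of norms. The only cosmetic difference is that the paper works with an $\eta$-almost-minimizing curve and sends $\eta\to0$, whereas you take the infimum over all competitor curves at the end; these are equivalent.
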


\begin{proof}
    Denote $o'\eqdef F(o)$. Let $x,y \in B_R(o)$. A constant speed geodesic $\gamma:[0,1]\to M$ from $x$ to $y$ has image contained in $B_{5R}(o)$. Hence we can estimate
    \[
    \tilde\dist(F(x),F(y)) \le \int_0^1 h\left( (F\circ\gamma)', (F\circ\gamma)' \right)^{\frac12} \de t \le \sqrt{1+\eps} \int_0^1 g(\gamma',\gamma')^{\frac12} \de t \le \sqrt{1+\eps} \,\dist(x,y).
    \]
    Denoting $G=F^{-1}$, by assumptions we have that 
    \[
    \left|G^*g(w,w)-h(w,w) \right| \le  \eps G^*g(w,w),
    \]
    for any $w\in T_zN$ and $z \in \Omega$.  
    Taking now $p,q \in F(B_R(o))$ and a constant speed curve $\sigma:[0,1]\to \Omega$ from $p$ to $q$ such that $\int_0^1 |\sigma'|_h \le (1+\eta)\tilde\dist(p,q)$ for $\eta\in(0,1)$, we can similarly estimate
    \[
    \dist(G(p),G(q)) \le \int_0^1 g( (G\circ \sigma)', (G\circ \sigma)')^{\frac12} \de t \le \frac{1}{\sqrt{1-\eps}} \int_0^1 h(\sigma',\sigma')^{\frac12} \de t  \le \frac{1+\eta}{\sqrt{1-\eps}} \tilde\dist(p,q).
    \]
    Sending $\eta\to0$, for $\eps$ small enough the claim follows.
\end{proof}

\begin{definition}[$C^0$-convergence]\label{def:C0locconvergence}
We say that pointed $C^0$-Riemannian manifolds \textit{$(M_i,g_i,p_i)$ $C^{0}$-converge to $(M,g,p)$} if the following holds. For every $R,\varepsilon>0$ there exist $i_0:=i_0(R,\varepsilon)$ and an open set $\Omega\subset  M$ such that we have:
\begin{enumerate}
\item $\overline{B}_R(p)\subset  \Omega$;
\item for every $i\geq i_0$ there exists an embedding $F_i:\Omega\to M_i$ such that 
\begin{itemize}
\item $F_i(p)=p_i$;
\item $\overline{B}_R(p_i)\subset  F_i(\Omega)$;
\item $ |(F_i^*g_i-g)_x(v,v)|\leq\varepsilon g_x(v,v)$ for every $x\in \overline{B}_R(p)$ and $v\in T_xM$.
\end{itemize}
\end{enumerate}
\end{definition}

\begin{remark}\label{rem:BiLipschitz}
In the notation of \cref{def:C0locconvergence}, arguing as in the proof of \cref{lem:LocalePiattezzaC0metrics}, it follows that for any $R,\eps>0$ the map $F_i:(B_R(p),\dist)\to (F_i(B_R(p)), \dist_i)$ is $(1+\eps)$-biLipschitz with its image for any $i$ large enough.
\end{remark}

\begin{lemma}\label{lem:LocalePiattezzaC0metrics}
Let $(M,g)$ be a $C^0$-Riemannian manifold, let $x_0 \in M\setminus\partial M$, and denote by $\dist$ the Riemannian distance on $M$. Then the following holds.
\begin{itemize}
    \item The metric tangent space of $M$ at $x_0$ is isometric to the Euclidean space $\R^n$, i.e., the rescalings $(M, \delta^{-2}g, x_0)$ converges to $(\R^n, g_{\rm eu}, 0)$ in $C^0$-sense as $\delta\searrow0$. 

    \item For any $\delta>0$ there is $r=r(x_0,\delta)>0$ and a local chart $\varphi: (B_r(x_0),\dist) \to (\varphi(B_r(x_0)), \dist_{\rm eu}) \subset \R^n$ such that $\varphi$ is $(1+\delta)$-biLipschitz with its image, where $\dist_{\rm eu}$ denotes Euclidean distance.
\end{itemize}
\end{lemma}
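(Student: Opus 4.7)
The plan is to reduce everything to a single local chart around $x_0$ in which $g$ is normalized at the center, and then to invoke \cref{lem:MetricheVicineImplicaBilipschitz}. Fix a chart $(U,\varphi)$ with $\varphi(x_0)=0$; after a linear change of coordinates on $\mathbb R^n$ I may assume that the coordinate frame is $g$-orthonormal at $x_0$, i.e., $((\varphi^{-1})^*g)_{ij}(0)=\delta_{ij}$. Since the $g_{ij}$ are continuous, for every $\eta>0$ there exists $\rho=\rho(\eta)>0$ with $\overline{B_\rho^{\rm eu}(0)}\subset\varphi(U)$ and
\[
\bigl|((\varphi^{-1})^*g)_y(v,v)-g_{\rm eu}(v,v)\bigr|\leq \eta\, g_{\rm eu}(v,v)\qquad\forall\,y\in B_\rho^{\rm eu}(0),\ v\in\mathbb R^n.
\]
This uniform closeness of $g$ to $g_{\rm eu}$ on a fixed ball in the chart is the only analytic input I will need.

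For the \textbf{biLipschitz chart} (second bullet), I plan to apply \cref{lem:MetricheVicineImplicaBilipschitz} with $N=\mathbb R^n$, $\Omega=B_\rho^{\rm eu}(0)$, $h=g_{\rm eu}$, $F=\varphi$, and $\eta$ taken to be the $\eps$ that the lemma produces from the prescribed $\delta$. Choosing $r>0$ so small that $B_{10r}(x_0)\subset\varphi^{-1}(B_\rho^{\rm eu}(0))$, the lemma gives that $\varphi|_{B_r(x_0)}$ is $(1+\delta)$-biLipschitz onto its image, where the target is endowed with the intrinsic length distance $\tilde\dist$ associated to $g_{\rm eu}$ on the open set $\Omega$. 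The key observation is that $\Omega=B_\rho^{\rm eu}(0)$ is \emph{convex}, so the infimum defining $\tilde\dist$ is realized by straight Euclidean segments and $\tilde\dist=\dist_{\rm eu}$ on $\Omega$; this upgrades the biLipschitz conclusion to the one stated in the lemma.

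For the \textbf{tangent space} (first bullet), I plan to use the rescalings of the chart: for $\delta\in(0,1)$ small and any $R>0$ with $\delta R<\rho$, define $\Psi_\delta\colon B_{R+1}^{\rm eu}(0)\to M$ by $\Psi_\delta(y)\eqdef\varphi^{-1}(\delta y)$. Then $\Psi_\delta(0)=x_0$ and a direct change of variables gives
\[
\Psi_\delta^{*}(\delta^{-2}g)_y(v,v)=((\varphi^{-1})^{*}g)_{\delta y}(v,v),
\]
so by the normalization at $0$ and continuity of $(\varphi^{-1})^*g$, the right-hand side converges to $g_{\rm eu}(v,v)$ uniformly on $\{(y,v)\st |y|\leq R,\,|v|=1\}$ as $\delta\to 0^{+}$. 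This verifies the metric-closeness condition (2) of \cref{def:C0locconvergence}, while the inclusion $\overline{B}_R^{(M,\delta^{-2}g)}(x_0)\subset \Psi_\delta(B_{R+1}^{\rm eu}(0))$ amounts, in the original scale, to $\overline{B}_{\delta R}^{(M,g)}(x_0)\subset\varphi^{-1}(B_{\delta(R+1)}^{\rm eu}(0))$, which holds for all $\delta$ small by the biLipschitz estimate of the second bullet applied near $x_0$. Hence $(M,\delta^{-2}g,x_0)\to(\mathbb R^n,g_{\rm eu},0)$ in the $C^0$-sense.

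\textbf{Main obstacle.} There is no serious difficulty beyond some bookkeeping: the whole content is that continuity of $g$ in a normalized chart makes the metric $\eta$-close to Euclidean on small balls, and \cref{lem:MetricheVicineImplicaBilipschitz} turns this into biLipschitz control. The one subtle point to handle carefully is that the distance $\tilde\dist$ provided by that lemma is an intrinsic length distance in $\Omega$, not the Euclidean distance; choosing $\Omega$ to be a convex Euclidean ball is what makes the two coincide and gives the clean statement.
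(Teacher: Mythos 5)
Your overall route coincides with the paper's: both bullets are reduced to a normalized chart in which $g$ is uniformly $\eta$-close to $g_{\rm eu}$, the second bullet is obtained from \cref{lem:MetricheVicineImplicaBilipschitz}, and the first from the rescaling map $y\mapsto\varphi^{-1}(\delta y)$ (the paper converges to the constant metric $g(0)$ rather than pre-orthonormalizing the frame, which is the same thing).

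The one place where you diverge is the identification $\tilde\dist=\dist_{\rm eu}$, and there your shortcut does not literally fit the lemma you invoke. \cref{lem:MetricheVicineImplicaBilipschitz} requires $F$ to be a diffeomorphism \emph{onto} $\Omega$, so $\Omega$ is forced to be $\varphi(B_{10r}(x_0))$, which is in general not convex; you cannot simply declare $\Omega=B_\rho^{\rm eu}(0)$ while restricting $F$ to $B_{10r}(x_0)$, because the lower-bound direction of the lemma's proof pulls a nearly $\tilde\dist$-minimizing curve back through $F^{-1}$, and $F^{-1}$ is only defined on $F(B_{10r}(x_0))$. This is exactly the point where the paper does extra work: it shows that nearly minimizing curves between points of $B_r^g(0)$ cannot leave $B_{9r}^g(0)$, passes to a limit curve, and identifies it as a straight segment, concluding $\tilde\dist=\dist_{\rm eu}$. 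Your convexity idea can be salvaged, but you must add the containment argument it silently relies on: for $p,q\in\varphi(B_r(x_0))$ one has $|p|,|q|\le\sqrt{1+\eta}\,r$ (by the easy direction of the biLipschitz estimate), hence the Euclidean segment $[p,q]$ lies in $B_{2r}^{\rm eu}(0)$, and $B_{2r}^{\rm eu}(0)\subset\varphi(B_{10r}(x_0))$ because the $g$-length of $\varphi^{-1}$ of a radial segment of Euclidean length $<2r$ is at most $2\sqrt{1+\eta}\,r<10r$. With that segment available as a competitor inside $F(B_{10r}(x_0))$, the lower-bound direction goes through and $\tilde\dist=\dist_{\rm eu}$ on $\varphi(B_r(x_0))$. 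So the proposal is essentially correct, but as written the appeal to convexity papers over the only non-routine step of the proof.
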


\begin{proof}
Passing in local coordinates we can identify a neighborhood of $x_0$ in $M$ with $(\Omega, g)$, where $\Omega\subset \R^n$ is open and $g=(g_{ij})$ is the metric in local coordinates. Also we can identify $x_0$ with the origin $0 \in \Omega$. Let $r_k\searrow0$. For the first part of the statement, it is sufficient to prove that $(\Omega, r_k^{-2} g, 0)$ $C^0_{\rm loc}$-converges to $(\R^n, g_0, 0)$, where $g_0$ is the constant metric given by $g$ evaluated at the origin. The diffeomorphism $\Phi_k: (r_k^{-1}\Omega, g_0) \to (\Omega, r_k^{-2}g)$ given by $\Phi_k(x)=r_k\, x$ satisfies $(\Phi_k^*(r_k^{-2}g))_x(v,v) = g_{r_k \, x}(v,v)$ for any tangent vector $v$. Hence, given any $R>0$ and $\eps>0$, for $k$ large enough we have that
\[
1-\eps \le \frac{(\Phi_k^*(r_k^{-2}g))_x(v,v)}{g_0(v,v)} \le 1+\eps,
\]
for any $x \in B_R^{g_0}(0) \subset (r_k^{-1}\Omega, g_0)$ and any tangent vector $v\neq0$. This establishes the convergence in $C^0$ to $\R^n$ as $k\to\infty$.

Concerning the second part of the statement, let $r>0$ to be chosen small and fix a local chart $\varphi:B_{10r}(x_0) \to \varphi(B_{10r}(x_0)) =: \Omega\subset \R^n$ such that $\varphi(x_0)=0$, and denote $g_0:= g_{\rm eu}$. We identify $B_{10r}(x_0)$ with $(\Omega, g)$ in the local chart. Let $\tilde \dist$ be as in \cref{lem:MetricheVicineImplicaBilipschitz} with $N=\R^n$. By continuity of the metric tensor, for any $\eps>0$ we can take $r$ so small that $|g_x(v,v)- g_0(v,v)| \le \eps g_0(v,v)$ for any $x \in \Omega$ and any $v \in \R^n$. For $\eps$ small enough, it follows from \cref{lem:MetricheVicineImplicaBilipschitz} that the identity ${\rm id}:(B^g_r(0), \dist) \to (B^g_r(0), \tilde\dist)$ is $(1+\delta)$-biLipschitz. It remains to observe that $\tilde\dist=\dist_{\rm eu}$ on $B^g_r(0)$. Indeed, fix $p,q \in B^g_r(0)$ and let $\gamma_i:[0,1]\to \Omega$ be a sequence of constant speed curves such that $\int_0^1 |\gamma_i'|_{\rm eu} \to \tilde\dist(p,q)$. It follows that $\gamma_i([0,1]) \subset B^g_{9r}(0)$ for large $i$. Indeed, taking $\sigma:[0,1]\to B^g_{5r}(0)$ a constant speed geodesic for $\dist$ from $p$ to $q$, we know that
\[
\tilde\dist(p,q) \le \int_0^1 |\sigma'|_{\rm eu} \le \frac{1}{\sqrt{1-\eps}}\int_0^1 |\sigma'|_g = \frac{1}{\sqrt{1-\eps}} \dist(p,q) \le  2r\frac{1}{\sqrt{1-\eps}}.
\]
Therefore, if $\gamma_i([0,1]) \not\subset B^g_{9r}(0)$ for some $i$, we would get
\[
\int_0^1 |\gamma_i'|_{\rm eu} \ge  \frac{1}{\sqrt{1+\eps}} \int_0^1 |\gamma_i'|_g \ge \frac{1}{\sqrt{1+\eps}} 16r,
\]
leading to a contradiction for large $i$, provided $\eps$ is small enough. Hence we can pass to the limit $\gamma_i$ to a curve $\gamma:[0,1] \to \overline{B}^g_{9r}(0)$ such that $\tilde\dist(p,q) = \int_0^1 |\gamma'|_{\rm eu}$. 
Hence $\gamma$ is a critical point for the length functional on $(B^g_{10r}(0), g_{\rm eu})$. Then $\gamma$ is a straight segment from $p$ to $q$ and in particular $\tilde\dist(p,q)= \dist_{\rm eu}(p,q)$.
\end{proof}

\subsection{BV functions and sets of finite perimeter}\label{sec:Perimeter}

Let $(M,g)$ be a complete $C^0$-Riemannian manifold of dimension $n$. Then we can consider the complete and separable metric measure space $(M,\dist,\mathcal{H}^n)$.
Following \cite{MIRANDA2003, ADM14}, we define the \emph{total variation} \(|Df|(B)\in[0,+\infty]\) of a function \(f\in L^1_{\mathrm{loc}}(M)\) in a Borel set \(B\subseteq M\) as
\begin{equation}\label{eqn:VariazioneTotale}
|Df|(B)\coloneqq\inf_{B\subseteq\Omega\text{ open}}\inf\bigg\{\liminf_{n\to\infty}\int_\Omega |\nabla f_n|\;\bigg|\;
(f_n)_{n\in\N}\subset\mathrm{Lip}_{\mathrm{loc}}(\Omega),\,f_n\to f\text{ in }L^1_{\mathrm{loc}}(\Omega)\bigg\}.
\end{equation}

\begin{remark}\label{remlip}
On a $C^0$-Riemannian manifold, the previous definition of total variation is equivalent to the usual one adopted on metric measure spaces. More precisely, defining the slope of a locally Lipschitz function $f$ by
\begin{equation}\label{eqn:lip0}
\mathrm{lip}f(x):=\limsup_{y\to x}\frac{|f(x)-f(y)|}{\dist(y,x)},
\end{equation}
then plugging $\lip f_n$ in place of $|\nabla f_n|$ in \eqref{eqn:VariazioneTotale} yields the same quantity. Indeed, if $f \in \Lip_{\rm loc}(M)$, then $\lip f = |\nabla f|$ almost everywhere, see \cref{lem:CoareaContinuousManifold}.
\end{remark}

If for some open cover \((\Omega_n)_{n\in\N}\) of \(M\) we have that \(|Df|(\Omega_n)<+\infty\) holds for every \(n\in\N\), then \(|Df|\) is a \textit{locally finite Borel measure on \(M\)}.
We say that a Borel set \(E\subseteq M\) is \emph{of locally finite perimeter} if \(P(E,\cdot)\coloneqq|D\chi_E|\) is a locally finite measure, called the \emph{perimeter measure}
of \(E\). When \(P(E)\coloneqq P(E,M)<+\infty\), we say that \(E\) is \emph{of finite perimeter}. If $f\in L^1(M)$ satisfies $|Df|(M)<+\infty$, then we say that $f\in BV(M)$.

Given any \(f\in\mathrm{Lip}_{\mathrm{loc}}(M)\), it holds that \(|Df|\) is a locally finite measure and \(| Df|=\lip f\mathcal{H}^n = |\nabla f| \haus^n\), see \cref{lem:CoareaContinuousManifold}. Moreover, we recall the coarea formula in our setting,
\begin{theorem}[Coarea formula {\cite[Proposition 4.2]{MIRANDA2003}}]\label{thm:Coarea}
Let $(M,g)$ be a complete $C^0$-Riemannian manifold. Let \(f\in L^1_{\mathrm{loc}}(M)\) be such that
\(|Df|\) is a locally finite measure. Fix a Borel set \(E\subseteq M\). Then
\(\R\ni t\mapsto P(\{f<t\},E)\in[0,+\infty]\) is a Borel measurable function and it holds that
\[
|Df|(E)=\int_\R P(\{f<t\},E)\de t.
\]
\end{theorem}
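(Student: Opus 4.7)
The natural strategy is to reduce the statement to the general coarea formula for BV functions on metric measure spaces established by Miranda in \cite[Proposition 4.2]{MIRANDA2003}, applied to $(M,\dist,\haus^n)$.

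First I would record that $(M,\dist,\haus^n)$ is a complete, separable metric measure space with $\haus^n$ a Radon measure (already collected in the preliminaries), so that it falls squarely within the setting to which Miranda's argument applies. No further doubling or Poincaré-type assumptions are needed for the coarea formula itself.

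The main step is to identify the total variation defined in \eqref{eqn:VariazioneTotale} with the metric-measure-space total variation used by Miranda, which differs only in that the Riemannian gradient norm $|\nabla f_n|$ is replaced by the upper slope $\lip f_n$ from \eqref{eqn:lip0}. This identification is exactly what \cref{remlip} asserts, and it rests on the a.e.\ equality $\lip f = |\nabla f|$ on $M$ for every $f\in\Lip_{\mathrm{loc}}(M)$, which in turn comes from \cref{lem:CoareaContinuousManifold}. Under the hood, one uses that at $\haus^n$-almost every point $M$ carries local $(1+\delta)$-biLipschitz charts into $(\R^n,\dist_{\mathrm{eu}})$ provided by \cref{lem:LocalePiattezzaC0metrics}; then classical Rademacher differentiability in the chart transfers to the equality of the slope and the Riemannian gradient norm almost everywhere. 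Once this pointwise identity is in hand, the two infima in \eqref{eqn:VariazioneTotale} (computed either with $|\nabla f_n|$ or with $\lip f_n$) agree for every locally Lipschitz approximating sequence, hence the two definitions of $|Df|(B)$ coincide on every Borel $B\subseteq M$.

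With these two identifications in place, \cite[Proposition 4.2]{MIRANDA2003} applies verbatim and yields both the Borel measurability of $t\mapsto P(\{f<t\},E)$ and the integral identity $|Df|(E)=\int_{\R}P(\{f<t\},E)\de t$ for every Borel $E\subseteq M$. The only technically nontrivial ingredient is therefore the a.e.\ equality $\lip f=|\nabla f|$, and it is handled entirely by the almost-Euclidean structure of $M$ at small scales; everything else is a direct invocation of Miranda's metric-measure-space result.
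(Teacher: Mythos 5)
Your proposal is correct and coincides with what the paper does: the theorem is quoted directly from \cite[Proposition 4.2]{MIRANDA2003}, and the only point needing verification — that the total variation in \eqref{eqn:VariazioneTotale} agrees with Miranda's metric-measure one built from $\lip f_n$ — is exactly the content of \cref{remlip}, resting on the a.e.\ identity $\lip f=|\nabla f|$ from \cref{lem:CoareaContinuousManifold}. (The paper proves that identity by uniform approximation with smooth metrics rather than via biLipschitz charts and Rademacher, but since you invoke \cref{lem:CoareaContinuousManifold} itself this is immaterial.)
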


Let us finally introduce the notion of isoperimetric profile.
\begin{definition}
    Let $(M,g)$ be a complete $C^0$-Riemannian manifold with $|M|=+\infty$. The \textit{isoperimetric profile} function is the function $I:(0,+\infty)\to [0,+\infty]$ defined as follows
    \[
    I(V):=\inf\{P(E):E\subset M,\mathcal{H}^n(E)=V\}.
    \]
\end{definition}

\subsection{Technical Lemmas on $C^0$-Riemannian manifolds}

In this section we prove several technical lemmas about $C^0$-Riemannian manifolds we are going to use throughout the paper. Notably, we prove: a precompactness theorem for $BV$ functions on converging sequences of continuous Riemannian manifolds (\cref{lem:PrecompactnessBV}); the fact that every continuous Riemannian manifold is PI on every ball (\cref{lem:TechnicalLemmaC0Manifold}); continuity of the isoperimetric profile of continuous Riemannian manifolds with $C^0$-controlled geometry at infinity (\cref{cor:Continuity}); and a generalized existence theorem for the isoperimetric problem on continuous Riemannian manifolds with $C^0$-controlled geometry at infinity (\cref{thm:MassDecompositionC0}).
\begin{lemma}\label{lem:ApprossimazioneC0Bilipschitz}
    Let $(M,g)$ be a $C^0$-Riemannian manifold. For any $p \in M$, $r>0$ and $\delta>0$ there exists a Riemannian manifold $(N,g^\delta)$ with smooth metric $g^\delta$ such that the inclusion $\iota: (B_r(p), \dist) \to (N, \dist^\delta)$ is well defined and it is $(1+\delta)$-biLipschitz with its image, where $\dist, \dist^\delta$ denote Riemannian distance on $(M,g)$, $(N,g^\delta)$ respectively.
\end{lemma}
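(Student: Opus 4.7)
The plan is to construct the smooth approximation $g^\delta$ on a precompact neighbourhood of $\overline{B_{10r}(p)}$ via a standard partition of unity and coordinate-wise mollification, and then invoke \cref{lem:MetricheVicineImplicaBilipschitz} with the identity map. First I would fix a precompact open set $\Omega \subset M$ with $\overline{B_{10r}(p)} \subset \Omega$, and cover $\overline{\Omega}$ with finitely many coordinate charts $(U_i, \varphi_i)_{i=1}^k$. Choose a smooth partition of unity $\{\chi_i\}$ with $\supp \chi_i \Subset U_i$. In each chart the components $g_{ab}^{(i)}\in C^{0}(\varphi_i(U_i))$ can be mollified by convolution with a standard mollifier $\rho_\eta$ on $\R^n$, producing smooth functions $g_{ab}^{(i),\eta}$ on a slightly smaller set $\varphi_i(U_i')$ containing $\varphi_i(\supp \chi_i)$; these converge uniformly to $g_{ab}^{(i)}$ on compact subsets as $\eta \to 0$. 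Transporting back via $\varphi_i^{-1}$, one obtains smooth symmetric $(0,2)$-tensors $g^{(i),\eta}$ on $U_i'$; extending $\chi_i g^{(i),\eta}$ by zero off $U_i$ yields smooth tensors on $M$, and I would define
\[
g^\eta \eqdef \sum_{i=1}^k \chi_i\, g^{(i),\eta}\qquad\text{on }\Omega.
\]

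Since a convex combination of symmetric positive-definite matrices is positive-definite and since each $g^{(i),\eta}$ is uniformly close to $g$ on $\supp \chi_i$ for $\eta$ small, the tensor $g^\eta$ is smooth and positive-definite on $\Omega$ provided $\eta$ is sufficiently small. Using $\sum_i \chi_i\equiv 1$ on $\Omega$, one has
\[
g^\eta(v,v) - g(v,v) = \sum_{i=1}^k \chi_i(x)\,\bigl(g^{(i),\eta}(v,v) - g(v,v)\bigr),
\]
so, given any prescribed $\eps>0$, the uniform convergence on the compact set $\overline{\Omega}$ allows us to choose $\eta=\eta(\eps)$ so small that $|g^\eta(v,v)-g(v,v)|\le \eps\, g(v,v)$ for all $x\in \overline{\Omega}$ and $v\in T_xM$.

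Now set $N\eqdef \Omega$ endowed with the smooth Riemannian metric $g^\delta \eqdef g^\eta$ (for $\eta$ to be chosen below). The inclusion $\iota : B_r(p)\hookrightarrow N$ is well defined because $B_r(p)\subset B_{10r}(p)\subset \Omega = N$. I would then apply \cref{lem:MetricheVicineImplicaBilipschitz} with $R=r$, $o=p$, the manifold $N=\Omega$, the diffeomorphism $F= \mathrm{id}: B_{10r}(p)\to \Omega$, and $h=g^\delta$; the hypothesis of that lemma holds for $\eps=\eps(\delta)$ small enough, which fixes the choice of $\eta$. The conclusion then gives that $\iota = \mathrm{id}|_{B_r(p)}$ is $(1+\delta)$-biLipschitz from $(B_r(p),\dist)$ onto its image in $(B_r(p),\tilde\dist)$, where $\tilde\dist$ is defined by infimising the $g^\delta$-length over curves in $\Omega$. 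But by the very definition of the Riemannian distance $\dist^\delta$ on $N=\Omega$, we have $\tilde\dist = \dist^\delta$, and therefore $\iota$ is the desired $(1+\delta)$-biLipschitz map.

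The main obstacle is ensuring that the partition-of-unity construction delivers a \emph{smooth positive-definite} tensor whose values stay pointwise comparable to $g$ with a uniform constant on $\overline{B_{10r}(p)}$; this is handled by exploiting compactness of $\overline{\Omega}$ together with the uniform convergence of the chart-wise mollifications. Once this is achieved the biLipschitz conclusion is immediate from \cref{lem:MetricheVicineImplicaBilipschitz}, so no further work on the global distance comparison is needed (we avoid $N=M$ precisely to keep the infimum over curves inside $\Omega$ matching $\tilde\dist$).
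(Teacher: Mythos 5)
Your overall route is the same as the paper's: produce a smooth metric uniformly close to $g$ on a neighbourhood of $\overline{B}_{10r}(p)$ (the paper simply asserts the existence of such an approximation; your partition-of-unity mollification is a fine way to obtain it) and then invoke \cref{lem:MetricheVicineImplicaBilipschitz}. The gap is in the last step, where you claim $\tilde\dist=\dist^\delta$ ``by the very definition''. In \cref{lem:MetricheVicineImplicaBilipschitz} the set over which $\tilde\dist$ infimises lengths is the \emph{image} of the diffeomorphism $F$; with $F=\mathrm{id}$ on $B_{10r}(p)$ that image is $B_{10r}(p)$, not your larger set $N=\Omega$. So $\tilde\dist$ is the infimum of $g^\delta$-lengths of curves confined to $B_{10r}(p)$, while $\dist^\delta$ admits curves wandering through all of $\Omega$; a priori one only has $\dist^\delta\le\tilde\dist$, and equality is not definitional. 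This is precisely where the paper's proof does its real work: there $N$ is made complete by gluing a half-cylinder, and one must rule out shorter paths escaping $B_{10r}(p)$, which the paper does by showing the $\tilde\dist$-minimiser is a $g^\eps$-geodesic and comparing it with the a.e.-unique minimising geodesic of $(N,g^\eps)$, following the second part of the proof of \cref{lem:LocalePiattezzaC0metrics}.

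In your setting the missing step is short, but it must be supplied: for $x,y\in B_r(p)$ one has $\dist^\delta(x,y)\le\tilde\dist(x,y)\le\sqrt{1+\eps}\,\dist(x,y)<3r$, whereas any curve in $\Omega$ from $x$ to $y$ that leaves $B_{10r}(p)$ has $g$-length at least $18r$, hence $g^\delta$-length at least $\sqrt{1-\eps}\cdot 18r>3r$; therefore near-minimisers for $\dist^\delta$ between points of $B_r(p)$ stay in $B_{10r}(p)$ and $\dist^\delta=\tilde\dist$ there. (Alternatively, take $N=B_{10r}(p)$ itself, in which case the identification really is definitional.) A further caveat: the paper deliberately glues a half-cylinder so that $(N,g^\delta)$ is \emph{complete}, while your $N=\Omega$ is incomplete. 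The statement as written does not require completeness, but the lemma is later used in situations (Bishop--Gromov and Buser's inequality in the proof of \cref{lem:TechnicalLemmaC0Manifold}) where completeness of the smooth comparison manifold matters, so you should either incorporate the gluing or justify those downstream applications separately.
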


\begin{proof}
    For any $\eps\in(0,1)$ let $g^\eps$ be a smooth Riemannian metric on $\overline{B}_{20r}^g(p)$ such that $|g(v,v)-g^\eps(v,v)| \le \eps g(v,v)$ for any $v \in T_xM$ and $x \in B_{20r}(p)$. Gluing the boundary of a smooth connected open domain $D$ such that $\overline{B}_{10r}^g(p) \subset D \subset\subset B_{20r}^g(p)$ with a half-cylinder $[0,+\infty)\times \partial D$ and suitably extending the metric $g^\eps$, we obtain a smooth complete Riemannian manifold $(N,g^\eps)$. Denote by $\dist^\eps$ the Riemannian distance on $(N,g^\eps)$ and by $\tilde\dist$ the distance defined in \cref{lem:MetricheVicineImplicaBilipschitz} with $F=\iota:B_{10r}^g(p) \to B_{10r}^g(p)\subset (N,g^\eps)$. Arguing as in the second part of the proof of \cref{lem:LocalePiattezzaC0metrics}, it follows that $\tilde\dist=\dist^\eps$ on $B_r^g(p)$. Indeed, as in the proof of \cref{lem:LocalePiattezzaC0metrics} we find that for any $x,y \in B^g_r(p)$ there exists a constant speed curve $\gamma:[0,1]\to B^g_{10r}(p)$ such that $\tilde\dist(x,y)= \int_0^1 |\gamma'|_{g^\eps}$, hence $\gamma$ is a geodesic for the metric $g^\eps$. For every $x \in B^g_r(p)$, we have that for almost every $y \in B^g_r(p)$ there exists a unique, hence minimizing, geodesic in $(N,g^\eps)$ from $x$ to $y$. Then, for every $x \in B^g_r(p)$, we have that for almost every $y \in B^g_r(p)$ the curve $\gamma$ just obtained must be the minimizing geodesic from $x$ to $y$ in $(N,g^\eps)$. Hence, given $x \in B^g_r(p)$, there holds $\tilde\dist(x,y)=\dist^\eps(x,y)$ for almost every $y \in B^g_r(p)$. Thus by continuity $\tilde\dist=\dist^\eps$ on $ B^g_r(p)$.    
    Therefore choosing $\eps$ small enough and eventually renaming $g^\eps$ into $g^\delta$, the conclusion follows from \cref{lem:MetricheVicineImplicaBilipschitz}.
\end{proof}

\begin{remark}[Representation of the perimeter]\label{rem:RepresentationPerimeter}
    Let $(M,g)$ be an $n$-dimensional $C^0$-Riemannian manifold, and let $E$ be a set of finite perimeter. Then $P(E,\cdot)=\mathcal{H}^{n-1}\llcorner \partial^eE$, being  $\partial^eE:=M\setminus (E^{(0)}\cup E^{(1)})$ the so-called \emph{essential boundary}, where
\begin{equation}\label{eqn:E1E0}
E^{(1)}\coloneqq\bigg\{x\in M\;\bigg|\;\lim_{r\to 0}\frac{|E\cap B_r(x)|}{|B_r(x)|}=1\bigg\},\qquad
E^{(0)}\coloneqq\bigg\{x\in M\;\bigg|\;\lim_{r\to 0}\frac{|E\cap B_r(x)|}{|B_r(x)|}=0\bigg\}.
\end{equation} 
Indeed, fix on a compact ball $B\subset M$ a sequence $g_i$ of smooth metrics such that $g_i\to g$ uniformly on $B$ (see, e.g., \cref{lem:ApprossimazioneC0Bilipschitz}). Then, up to subsequences, $\partial^e E\cap B$ does not depend on the metrics $g_i,g$ chosen in the definition \eqref{eqn:E1E0} (compare also with item (1) in \cref{lem:TechnicalLemmaC0Manifold}). Moreover, by the very definition of Hausdorff measure, $(1-o_i(1))\mathcal{H}^{n-1}_{g_i}\llcorner B \leq \mathcal{H}^{n-1}_{g}\llcorner B \leq (1+o_i(1))\mathcal{H}^{n-1}_{g_i}\llcorner B$. From the classical De Giorgi-Federer's theorem, $|D\chi_E|_i(\Omega)=\mathcal{H}^{n-1}_{g_i}(\partial^eE\cap \Omega)$ for every $\Omega\subset\subset B$. Thus, taking into account \cref{lem:CoareaContinuousManifold} one has that $|D\chi_E|_i(\Omega)\to |D\chi_E|(\Omega)$ for every Borel $\Omega\subset\subset B$, and thus we conclude $|D\chi_E|\llcorner B=\mathcal{H}^{n-1}\llcorner (\partial^e E\cap B)$. Since $B$ was arbitrary, we get the sought claim. In the following, if $E$ is a set of locally finite perimeter, integrals over its essential boundary are tacitly understood to be taken with respect to the perimeter measure.
\end{remark}

\begin{lemma}\label{lem:PrecompactnessBV}
    Let $(M_i,g_i,p_i)$ be a sequence of pointed $C^0$-Riemannian manifolds of dimension $n$ converging in $C^0$-sense to a pointed $C^0$-manifold $(M,g,p)$. Denote by $\dist, \dist_i$ the Riemannian distances on $(M,g), (M_i,g_i)$ respectively. Let $f_i \in BV(M_i)$ be such that $\sup_i \| f_i\|_{L^1(M_i)} + |D f_i|(M_i) <+\infty$.
    Then, up to subsequence, there exist $f \in BV(M)$, $R_i\nearrow+\infty$ and $(1+1/i)$-biLipschitz embeddings $F_i: (B_{R_i}(p), \dist)\to (M_i,\dist_i)$ with $F_i(p)=p_i$ such that the functions $f_i\circ F_i$ converge to $f$ in $L^1_{\rm loc}(M)$. Moreover there holds
    \[
    |D f|(M) \le \liminf_i |D f_i|(M_i).
    \]
    If also ${\rm spt} f_i \subset B_R(p_i)$ for some $R>0$ and for every $i$, then the convergence occurs in $L^1(M)$.
\end{lemma}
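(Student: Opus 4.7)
The plan is to use the $C^0$-convergence to produce $(1+1/i)$-biLipschitz identifications of exhausting pieces of $(M,g)$ with open subsets of $(M_i,g_i)$, pull back the functions $f_i$, and then apply the standard $BV$ compactness theory on $(M,g)$.

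First, by \cref{def:C0locconvergence} applied to pairs $(10k,\eps_k)$ with $\eps_k\searrow 0$ chosen according to \cref{lem:MetricheVicineImplicaBilipschitz}, together with \cref{rem:BiLipschitz}, a diagonal extraction yields an increasing sequence $R_i\nearrow+\infty$ and embeddings $F_i:B_{R_i}(p)\to M_i$ with $F_i(p)=p_i$ which are $(1+1/i)$-biLipschitz onto their image with respect to $\dist$ and $\dist_i$. Set $\tilde f_i\eqdef f_i\circ F_i$. Since a biLipschitz map of factor $L$ distorts the $n$-dimensional Hausdorff measure by a factor between $L^{-n}$ and $L^n$, for every fixed $R>0$ one has $\|\tilde f_i\|_{L^1(B_R(p))}\leq (1+1/i)^n\|f_i\|_{L^1(M_i)}$, which is uniformly bounded. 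Using the slope-based equivalent form of the total variation in \cref{remlip}, together with the elementary bound $\lip(h\circ F_i)(x)\leq (1+1/i)\,\lip h(F_i(x))$ valid for any $h\in\Lip_{\rm loc}(M_i)$, approximating $f_i$ by locally Lipschitz functions yields
\[
|D\tilde f_i|(B_R(p))\leq (1+1/i)^{n+1}|Df_i|(F_i(B_R(p)))\leq (1+1/i)^{n+1}|Df_i|(M_i),
\]
which is also uniformly bounded.

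Second, to extract an $L^1_{\rm loc}$-convergent subsequence I will apply $BV$ compactness on a smooth approximation of $(B_R(p),g)$. By \cref{lem:ApprossimazioneC0Bilipschitz}, for every $R>0$ and $\delta>0$ there exists a smooth complete Riemannian manifold $(N,g^\delta)$ such that the inclusion $(B_R(p),\dist)\hookrightarrow (N,\dist^\delta)$ is $(1+\delta)$-biLipschitz onto its image. The sequence $\tilde f_i|_{B_R(p)}$, viewed as a sequence of $BV$-functions on an open subset of the smooth manifold $(N,g^\delta)$, inherits uniform $L^1$ and $BV$ bounds up to a $(1+\delta)^{n+1}$ factor, so the classical Rellich--Kondrachov $BV$ compactness produces an $L^1$-convergent subsequence on $B_R(p)$. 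A diagonal extraction over $R=1,2,3,\dots$ then yields a subsequence (still denoted by $\tilde f_i$) and a limit $f\in L^1_{\rm loc}(M)$ such that $\tilde f_i\to f$ in $L^1_{\rm loc}(M)$.

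Third, the $L^1_{\rm loc}$-lower semicontinuity of the total variation follows immediately from the defining formula \eqref{eqn:VariazioneTotale}, so combining it with the estimate from the first step one obtains, for every open $\Omega\subset M$,
\[
|Df|(\Omega)\leq \liminf_i |D\tilde f_i|(\Omega)\leq \liminf_i (1+1/i)^{n+1}|Df_i|(M_i)=\liminf_i |Df_i|(M_i).
\]
Sending $\Omega\nearrow M$ along an exhaustion gives $|Df|(M)\leq\liminf_i|Df_i|(M_i)$ and in particular $f\in BV(M)$. Finally, if $\supp f_i\subset B_R(p_i)$ for every $i$ and some fixed $R$, then the $(1+1/i)$-biLipschitz property of $F_i$ with $F_i(p)=p_i$ gives $\supp\tilde f_i\subset B_{(1+1/i)R}(p)\subset B_{2R}(p)$ for $i$ large, so the local $L^1$-convergence automatically upgrades to convergence in $L^1(M)$.

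The main technical point is ensuring that the slope-based definition of the total variation transforms under the $(1+1/i)$-biLipschitz pullback $F_i$ with distortion tending to $1$; this rests on the equivalence in \cref{remlip} between the relaxed and the slope-relaxed definitions of total variation, combined with the elementary biLipschitz inequality on slopes and the biLipschitz distortion estimate for $\haus^n$.
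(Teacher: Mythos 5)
Your proposal is correct and follows essentially the same route as the paper: a diagonal extraction via \cref{rem:BiLipschitz} to get the $(1+1/i)$-biLipschitz embeddings $F_i$, transfer of the uniform $L^1$ and total-variation bounds to the pullbacks $f_i\circ F_i$ using the biLipschitz distortion of $\haus^n$ and of slopes (the paper states the resulting estimate $|Dh_i|(B_r^g(p))\le(1+o(1))|Df_i|(M_i)$ more tersely), classical $BV$ compactness after embedding each ball into a smooth manifold via \cref{lem:ApprossimazioneC0Bilipschitz}, and lower semicontinuity of the total variation under $L^1_{\rm loc}$ convergence. The only difference is that you spell out the slope-based transformation of the total variation in more detail than the paper does, which is a harmless (and arguably welcome) addition.
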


\begin{proof}
By a diagonal argument, up to passing to a subsequence, by \cref{rem:BiLipschitz} there exist  $R_i\nearrow+\infty$ and $(1+1/i)$-biLipschitz embeddings $F_i: B_{R_i}(p)\to M_i$ with $F_i(p)=p_i$. Denote $h_i\eqdef f_i\circ F_i$. For any $r>0$, for $i$ large we have that $h_i \in BV(B_r(p))$ with $\sup_i \| h_i\|_{L^1(B_r(p))} + |D h_i|(B_r(p)) <+\infty$. If we show that $h_i$ admits a subsequence converging in $L^1(B_{r/2}(p))$, the first part of the statement follows. Indeed, by \cref{lem:ApprossimazioneC0Bilipschitz} we can find a smooth Riemannian manifold $(N,g^\delta)$ such that it is well defined the inclusion $\iota:(B_r(p),\dist) \to (N,\dist^\delta)$ and $\iota$ is $2$-biLipschitz with its image. Hence $h_i$ can be seen as an equibounded sequence in $BV(B_r^g(p), g^\delta)$. By classical precompactness we can extract a subsequence converging in $L^1(B_{r/2}^g(p), \haus^n_{g^\delta})$. Since $\haus^n_{g^\delta}$ and $\haus^n_g$ are equivalent, the subsequence converges in $L^1(B_{r/2}(p))$ as well.

The lower semicontinuity inequality readily follows since, for any $r>0$ for $i$ large enough we have
\[
|D h_i|(B_r^g(p)) \le (1+o(1))|D f_i| (B_{2(1+1/i)r}^{g_i}(p_i)) \le (1+o(1))|D f_i| (M_i),
\]
where $o(1)\to0$ as $i\to\infty$.    
\end{proof}

\begin{lemma}\label{lem:TechnicalLemmaC0Manifold}
    Let $(M,g)$ be a $C^0$-Riemannian manifold of dimension $n$. Fix $R>0$, $p\in M$. Then there exists $C:=C(p,R)>1$ such that the following hold.
    \begin{enumerate}
        \item For any $x\in B_R(p)$ and $0<r\leq R$ there holds
        \[
        \frac{|B_{2r}(x)|}{|B_r(x)|} \le C,
        \qquad
        C^{-1}r^n \leq |B_r(x)|\leq Cr^n.
        \]
    
        \item For any $x\in B_R(p)$, any $r\leq R$, and any $f\in\mathrm{Lip}_{\rm loc}(M)$ there holds
        \begin{equation}\label{eq:PoincareOnC0metric}
        \fint_{B_r(x)}\left|f-\fint_{B_r(x)}f\right|\leq Cr\fint_{B_{2r}(x)}|\nabla f|.
        \end{equation}
        
        \item For any $E\subset\subset  B_{R}(p)$ there holds
        \begin{equation}\label{eq:isoperimetricLocaleC0metric}
            |E|^{\frac{n-1}{n}}\leq CP(E).
        \end{equation}
    \end{enumerate}
\end{lemma}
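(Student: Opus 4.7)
The plan is to reduce all three statements to the corresponding classical inequalities on a smooth Riemannian manifold, exploiting the bi-Lipschitz approximation provided by \cref{lem:ApprossimazioneC0Bilipschitz}. Fix once and for all a small $\delta \in (0, 1/100)$. Applying \cref{lem:ApprossimazioneC0Bilipschitz} at the point $p$ with radius $10R$, I obtain a smooth complete Riemannian manifold $(N, g^\delta)$ together with a $(1+\delta)$-biLipschitz embedding $\iota: (B_{10R}(p), \dist) \hookrightarrow (N, \dist^\delta)$. Since $\iota$ distorts distances by a factor of $(1+\delta)^{\pm 1}$, volumes by $(1+\delta)^{\pm n}$, and perimeters by $(1+\delta)^{\pm(n-1)}$ (see \cref{rem:RepresentationPerimeter} for the latter), each of the three inequalities is invariant under transfer across $\iota$, up to modifying the constant $C$ by a uniform factor close to $1$, which can be absorbed into $C(p,R)$. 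Thus it suffices to establish all three estimates on the compact set $\iota(\overline{B}_{2R}(p))$ inside $(N, g^\delta)$.

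For item (1), on the compact region $\iota(\overline{B}_{2R}(p))$ the smooth Riemannian metric $g^\delta$ is $C^0$-comparable to the Euclidean metric in finitely many local charts; a covering argument then yields two-sided bounds $c_1 r^n \le |B^\delta_r(y)|_{g^\delta} \le c_2 r^n$ for every $y \in \iota(B_R(p))$ and $0<r\le 2R$, with $c_1,c_2$ depending on $p, R$. Volume doubling and the Ahlfors regularity of order $n$ follow at once, and the statement on $(M,g)$ is obtained by transferring through $\iota$.

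For items (2) and (3), on $(N, g^\delta)$ I invoke two classical facts: the $(1,1)$-Poincaré inequality on geodesic balls (applicable since $\iota(\overline{B}_{2R}(p))$ has uniformly bounded sectional curvature; this is, e.g., Buser's inequality), and the Euclidean-exponent isoperimetric inequality for sets compactly contained in a fixed bounded region, which is a direct consequence of the Sobolev embedding $W^{1,1} \hookrightarrow L^{n/(n-1)}$ on bounded smooth domains. Both come with constants depending only on the $C^2$-geometry of $g^\delta$ on a bounded region containing $\iota(\overline{B}_{2R}(p))$, hence ultimately on $p, R$. Transferring back via $\iota$ (using, for functions, the identity $|\nabla(f\circ\iota^{-1})|_{g^\delta} \le (1+\delta)|\nabla f|_g\circ \iota^{-1}$ coming from the bi-Lipschitz condition) yields (2) and (3) on $(M,g)$.

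The main technical point is the radius matching in the Poincaré inequality (2): the LHS and RHS balls should have radii $r$ and $2r$, but transfer through $\iota$ slightly distorts this ratio (e.g.\ $2r \mapsto 2(1+\delta)^2 r$). This is handled by combining the $(1,1)$-Poincaré on $(N,g^\delta)$ with the doubling property already established in (1), so that averages on the mildly enlarged ball are comparable to averages on the ball of radius $2r$ up to constants absorbed in $C(p,R)$. No further non-standard input is needed.
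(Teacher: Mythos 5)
Your overall strategy --- transferring all three estimates to a smooth approximant via \cref{lem:ApprossimazioneC0Bilipschitz} and invoking the classical smooth-manifold inequalities there --- is exactly the route the paper takes, and items (1) and (3) go through as you describe. The problem is your proposed repair of the radius mismatch in item (2). After transferring the smooth Poincar\'e inequality with dilation factor $2$ back through $\iota$, you end up with
\[
\fint_{B_r(x)}\Bigl|f-\fint_{B_r(x)}f\Bigr| \;\le\; C\,r \fint_{B_{2(1+\delta)^2 r}(x)}|\nabla f|,
\]
and you claim that doubling lets you replace the enlarged ball by $B_{2r}(x)$ on the right. Doubling only controls the ratio of the \emph{volumes} of the two balls, i.e.\ the denominators of the averages; it gives no control of $\int_{B_{2(1+\delta)^2r}(x)}|\nabla f|$ in terms of $\int_{B_{2r}(x)}|\nabla f|$. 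Indeed, for $f$ constant on $B_{2r}(x)$ but varying in the annulus $B_{2(1+\delta)^2r}(x)\setminus B_{2r}(x)$ the second integral vanishes while the first does not, so the comparison of averages you invoke is simply false. (The final inequality happens to be trivially true for such an $f$, but that only shows your deduction cannot establish it.)

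The gap is easily repaired, and the repair is what the paper does: on the smooth manifold $(N,g^\delta)$ one has the $(1,1)$-Poincar\'e inequality \emph{with the same ball on both sides} (this is the strong form of Buser's inequality, cf.\ \cite[Corollary 5.3.5]{SaloffCosteBook}, with constant depending on a Ricci lower bound and the radius). Applying it on the ball $B^{g^\delta}_{(1+\delta)r}(\iota(x))\supset \iota(B_r(x))$, the right-hand ball pulls back inside $B_{(1+\delta)^2r}(x)\subset B_{2r}(x)$, so the factor-$2$ enlargement in the statement absorbs the bi-Lipschitz distortion with room to spare and no comparison of gradient averages over nested balls is ever needed. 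With that substitution your argument is complete and coincides with the paper's proof.
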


\begin{proof}
The claims are well-known to hold true on manifolds with smooth Riemannian metrics as a consequence of the fact that one can always find a lower bound for the Ricci curvature on compact sets. In fact, on a smooth manifold, (1) is a consequence of the Bishop--Gromov comparison theorem \cite[Lemma 7.1.4]{PetersenBook}, (2) follows from \cite{Buser, RajalaPoincare}, and (3) is a consequence of (1) and (2) by \cite[Theorem 9.7]{HajlaszKoskela} and \cite[Theorem 4.3, Remark 4.4]{Ambrosio02}. Therefore claims (1) and (3) in the setting of the statement readily follow from \cref{lem:ApprossimazioneC0Bilipschitz} applied with $r$ sufficiently large.

It remains to prove (2). Let $(N, g^\delta)$ be given by \cref{lem:ApprossimazioneC0Bilipschitz} with $\delta=1/4$ and radius equal to $3R$. Letting now $f \in \Lip_{\rm loc}(M)$, for $x \in B_R(p)$ and $r \le R$, we have $h\eqdef f \circ \iota^{-1} \in \Lip_{\rm loc}(\iota(B_{2r}(p)), \dist^\delta)$, for $\iota$ as in \cref{lem:ApprossimazioneC0Bilipschitz} where $\dist^\delta$ is the distance on $(N, g^\delta)$. Since $|\nabla h(y)|_{g^\delta} \le 2 |\nabla f (y)|_g$ for any $y \in B_{2r}(p)$, we find
\[
\begin{split}
    \int_{B_r^g(x)}\left|f-\fint_{B_r^g(x)}f\de\vol_g\right|\de\vol_g
    & \le 2 \int_{B_r^g(x)}\left|f-\fint_{B_{\frac54 r}^{g^\delta}(x)}h\de\vol_{g^\delta}\right|\de\vol_g \\
    &\le C \int_{B_{\frac54 r}^{g^\delta}(x)}\left|h-\fint_{B_{\frac54 r}^{g^\delta}(x)}h\de\vol_{g^\delta}\right|\de\vol_{g^\delta} \\
    &\le C r \int_{B_{\frac54 r}^{g^\delta}(x)} |\nabla h |_{g^\delta} \de\vol_{g^\delta}\\
    &\le C r \int_{B^g_{2r}(x)}  |\nabla f (y)|_g \de \vol_g,
\end{split}
\]
where in the third inequality we applied a Poincaré inequality as in \eqref{eq:PoincareOnC0metric}, recalling that on smooth manifolds it is possible to take the integral on the right hand side on the ball of the same radius that appears on the left hand side \cite[Corollary 5.3.5]{SaloffCosteBook}.
\end{proof}

What observed so far implies that $C^0$-Riemannian manifolds locally asymptotic at infinity to space forms are PI spaces, see the forthcoming \cref{cor:C0locasymptoticPi}. We recall their definition, leaving the interested reader to the seminal \cite{HajlaszKoskela, Cheeger00} and to the survey \cite{KleinerMackay}.

\begin{definition}[PI space]\label{def:PI}
Let $(\X,\dist,\meas)$ be a complete and separable metric measure space, where $\meas$ is a Radon measure.
 We say that $\meas$ is {\em uniformly locally doubling} if for every $R>0$ there exists $C_D(R)>0$ such that the following holds 
 $$
 \meas(B_{2r}(x))\leq C_D(R)\meas(B_r(x)), \qquad \forall x\in\X\;\forall r\leq R.
 $$

We say that a {\em weak local $(1,1)$-Poincar\'{e} inequality} holds on $(\X,\dist,\meas)$ if there exists $\lambda$ such that for every $R>0$ there exists $C_P(R)$ such that for every $f\in \mathrm{Lip}(X)$,
the following inequality holds:
$$
\fint_{B_r(x)} |f-\overline f(x)|\de\meas \leq C_P(R)r\fint_{B_{\lambda r}(x)} \mathrm{lip}f\de\meas, 
$$
for every $x\in\X$ and $r\leq R$, where $\overline f(x):=\fint_{B_r(x)}f\de\meas$, and 
\begin{equation}\label{eqn:lip}
\mathrm{lip}f(x):=\limsup_{y\to x}\frac{|f(x)-f(y)|}{\dist(y,x)},
\end{equation}
if $x$ is an accumulation point, or $\mathrm{lip}f(x)=0$ if $x$ is not an accumulation point.

We say that $(\X,\dist,\meas)$ is a {\em PI space} when $\meas$ is uniformly locally doubling and a weak local $(1,1)$-Poincar\'{e} inequality holds on $(\X,\dist,\meas)$.
\end{definition}

\begin{corollary}\label{cor:C0locasymptoticPi}
    Let $(M,g)$ be a $C^0$-Riemannian manifold of dimension $n$ that is $C^0_{\rm loc}$-asymptotic to the $n$-dimensional simply connected complete model $\mathbb H^n_K$ of constant sectional curvature $K\le 0$\footnote{$\mathbb H^n_K=\R^n$ if $K=0$, while $\mathbb H^n_K$ is the $n$-dimensional hyperbolic space of constant sectional curvature $K$ if $K<0$.}. Then $(M, \dist, \haus^n)$ is a PI space.
\end{corollary}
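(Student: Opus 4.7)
The plan is to upgrade the local doubling and Poincaré estimates in \cref{lem:TechnicalLemmaC0Manifold} (where the constants are allowed to depend on the center $p$) to uniform estimates at each scale $R>0$, which is precisely what the definition of a PI space demands. The $C^0_{\rm loc}$-asymptotic hypothesis to $\mathbb H^n_K$ is exactly what controls the constants as the base point escapes to infinity.

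Fix $R>0$ and argue by contradiction, assuming that no uniform doubling constant at scale $R$ works; the Poincaré case is completely analogous. This yields a sequence of centers $p_i\in M$ and radii $r_i\in(0,R]$ along which the doubling ratio $\haus^n(B_{2r_i}(p_i))/\haus^n(B_{r_i}(p_i))$ diverges. First I would rule out the case that $(p_i)$ admits a subsequence converging in $M$ to some $p_\infty$: then for $i$ large one has $B_{2r_i}(p_i)\subset B_{3R}(p_\infty)$, and the constant $C(p_\infty,3R)$ provided by \cref{lem:TechnicalLemmaC0Manifold} uniformly bounds the ratio, a contradiction.

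The remaining, genuinely nontrivial, case is $p_i\to\infty$. By \cref{def:C0locasymptotic} there exists $o\in \mathbb H^n_K$ such that, along a subsequence, $(M,g,p_i)\to(\mathbb H^n_K,h,o)$ in the $C^0$-sense. Fix $\varepsilon>0$ small, to be chosen. By \cref{def:C0locconvergence} and \cref{rem:BiLipschitz}, for $i$ large there exist embeddings $F_i\colon\Omega\to M$ defined on an open set $\Omega\supset \overline{B}_{3R}(o)$, with $F_i(o)=p_i$, such that $|(F_i^*g-h)_x(v,v)|\le \varepsilon\, h_x(v,v)$ for $x\in B_{3R}(o)$. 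These restrict to $(1+\varepsilon)$-biLipschitz embeddings of $(B_{3R}(o),\dist_h)$ into $(M,\dist)$ whose image contains $B_{2R}(p_i)$, and the pointwise metric closeness also gives, for a dimensional constant $c_n$, the measure comparison $(1-c_n\varepsilon)\,F_{i\ast}\haus^n_h\le\haus^n_g\le(1+c_n\varepsilon)\,F_{i\ast}\haus^n_h$ on $B_{2R}(p_i)$.

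Since $\mathbb H^n_K$ is a smooth simply connected space form of constant sectional curvature $K\le 0$, the Bishop--Gromov inequality yields a doubling constant $C_0=C_0(R,K,n)$ on balls of radius $\le 3R$ centered at any point (the choice of $o$ is immaterial by homogeneity), and the Buser/Rajala argument already invoked in the proof of \cref{lem:TechnicalLemmaC0Manifold} gives an analogous uniform $(1,1)$-Poincaré constant. Chaining these intrinsic bounds on $\mathbb H^n_K$ with the biLipschitz and measure comparisons above, one transfers them through $F_i$ to $(M,g)$ on $B_R(p_i)$ with constants arbitrarily close to $C_0$ as $\varepsilon\to 0$, contradicting the assumed divergence of the ratio at $(p_i,r_i)$. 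The main technical obstacle is the careful bookkeeping of how doubling and Poincaré constants degrade under a $(1+\varepsilon)$-biLipschitz change of distance \emph{combined} with an $\varepsilon$-comparable change of metric tensor (needed to move $\haus^n$); since all distortion factors are continuous in $\varepsilon$ and equal to $1$ at $\varepsilon=0$, no genuine difficulty arises beyond keeping track of constants.
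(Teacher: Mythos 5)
Your proposal is correct and rests on exactly the same two ingredients as the paper's proof: the transfer of the doubling and Poincar\'e constants of $\mathbb H^n_K$ to $M$ near infinity via the $(1+\eps)$-biLipschitz embeddings coming from \cref{rem:BiLipschitz}, and \cref{lem:TechnicalLemmaC0Manifold} on the remaining compact region. The only difference is packaging — you phrase it as a compactness/contradiction argument on a hypothetical bad sequence $(p_i,r_i)$, whereas the paper argues directly by fixing $\rho>R$ so that the two regimes $B_{2\rho}(o)$ and $M\setminus B_\rho(o)$ cover $M$ with uniform constants on each — so the mathematical content is the same.
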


\begin{proof}
    Let $R>0$ and fix $o \in M$. By assumptions and recalling \cref{rem:BiLipschitz}, we can fix $\rho>R$ such that for any $x \in M\setminus B_\rho(o)$ there exists a diffeomorphism $F_x:(B_{3R}(x), g) \to (\mathbb H^n_K, g_K)$ that is $2$-biLipschitz with its image, where $g_K$ is the metric on $\mathbb H^n_K$, {and whose image contains $B_R(F_x(x))$}. Since $\mathbb H^n_K$ is PI, arguing as in the proof of \cref{lem:TechnicalLemmaC0Manifold} it follows that there exist $\overline{C}>0$ such that
    \[
        \frac{|B_{2r}(x)|}{|B_r(x)|} \le \overline C,
        \qquad
        \fint_{B_r(x)}\left|f-\fint_{B_r(x)}f\right|\leq \overline C r\fint_{B_{2r}(x)}|\nabla f|,
    \]
    for any $x\in M \setminus B_\rho(o)$, any $r\leq R$, and any $f\in\mathrm{Lip}_{\rm loc}(M)$.
    
    Next apply \cref{lem:TechnicalLemmaC0Manifold} with $p=o$ and $R=2\rho$. Hence for any $x \in M$ we have that either $x \in B_{2\rho}(o)$, or $x\in M \setminus B_\rho(o)$. Hence the fact that $(M,\dist,\haus^n)$ is PI follows putting together the previous inequalities with those given by \cref{lem:TechnicalLemmaC0Manifold}.
\end{proof}

The next corollary states the local H\"{o}lder continuity of the isoperimetric profile of $C^0$-Riemannian manifolds that are $C^0$-locally asymptotic to a model of constant curvature. The proof essentially follows a classical path, see e.g. \cite[Lemma 2.23]{ANP}. However in this context we do not have an explicit asymptotic rate for the perimeter of balls of infinitesimal radii, which are often used to perturb competitors. In place of ball, we shall employ images of Euclidean balls through biLipschitz maps into the manifold, so to get a one-parameter increasing family of sets whose perimeter has an explicit rate as the parameter goes to zero.

\begin{corollary}\label{cor:Continuity}
Let $(M,g)$ be a $C^0$-Riemannian manifold of dimension $n$ that is $C^0_{\rm loc}$-asymptotic to the $n$-dimensional simply connected complete model $\mathbb H^n_K$ of constant sectional curvature $K\le 0$. Denote by $I$ (resp., $I_K$) the isoperimetric profile of $M$ (resp., $\mathbb{H}^n_K$).

Then $I\le I_K$, and $I$ is locally $\tfrac{n-1}{n}$-H\"{o}lder continuous on $(0,+\infty)$.
\end{corollary}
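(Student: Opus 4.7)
The plan is to establish both inequalities by perturbing competitor sets via biLipschitz images of small Euclidean balls: for the first bound the perturbation uses the $C^0_{\rm loc}$-asymptotic charts, and for the second the local almost-Euclidean charts of \cref{lem:LocalePiattezzaC0metrics}.

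For $I \le I_K$, fix $V > 0$ and $\varepsilon > 0$, and let $B_V \subset \mathbb H^n_K$ be an isoperimetric set of volume $V$ contained in a ball $B^{\mathbb H^n_K}_R(o)$. By \cref{def:C0locasymptotic} and \cref{rem:BiLipschitz}, for any diverging sequence $p_i \to \infty$ in $M$ there exists, for $i$ large, a $(1+\varepsilon)$-biLipschitz embedding $F_i\colon B^{\mathbb H^n_K}_R(o) \to M$ with $F_i(o) = p_i$. The image $E_i \eqdef F_i(B_V)$ satisfies $|E_i|_g \in [(1+\varepsilon)^{-n}, (1+\varepsilon)^n] V$ and $P_g(E_i) \le (1+\varepsilon)^{n-1} I_K(V)$. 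One then brings the volume to exactly $V$ by adding or subtracting a biLipschitz image of a small Euclidean ball at a point of $M$ disjoint from $E_i$, as provided by \cref{lem:LocalePiattezzaC0metrics}; this costs at most $C\varepsilon$ in perimeter. Hence $I(V) \le (1+\varepsilon)^{n-1} I_K(V) + C\varepsilon$, and letting $\varepsilon \to 0$ yields $I(V) \le I_K(V)$.

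For the Hölder continuity, fix a compact $[V_0, V_1] \subset (0,+\infty)$ and $V, V' \in [V_0, V_1]$ with $V < V'$; set $s = V' - V$. Take a competitor $F$ with $|F| = V'$ and $P(F) \le I(V') + \eta$ for some $\eta > 0$ small; by the first part, $P(F)$ is uniformly bounded on $[V_0, V_1]$. By \cref{rem:RepresentationPerimeter}, $\haus^n$-a.e.\ point of $M$ lies in $F^{(1)} \cup F^{(0)}$. Fix $\delta \in (0, 1/2)$ and a density-one point $x \in F^{(1)}$; by \cref{lem:LocalePiattezzaC0metrics} there exist $r_x > 0$ and a $(1+\delta)$-biLipschitz chart $\varphi_x\colon B_{r_x}(x) \to \R^n$ with $\varphi_x(x) = 0$. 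Set $D_t \eqdef \varphi_x^{-1}(B^{\mathrm{eu}}_t)$ for $t \in (0, r_x/(1+\delta))$. The biLipschitz property together with the density-one condition yields $|F \cap D_t|_g = (1+o(1)) \omega_n t^n$ as $t \to 0^+$, so the continuous decreasing function $t \mapsto |F \setminus D_t|$ covers an interval $[V' - s_0, V']$ for some $s_0 > 0$. For $s \le s_0$, one chooses $t_* \asymp s^{1/n}$ with $|F \setminus D_{t_*}| = V$, whence
\[
I(V) \le P(F \setminus D_{t_*}) \le P(F) + P(D_{t_*}) \le I(V') + \eta + (1+\delta)^{n-1} n \omega_n t_*^{n-1},
\]
and $\eta \to 0$ gives $I(V) \le I(V') + C_0 s^{(n-1)/n}$ with $C_0$ universal. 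The reverse inequality $I(V') \le I(V) + C_0 s^{(n-1)/n}$ follows from the symmetric construction: start from a near-minimizer of volume $V$, pick a density-zero point, and replace the difference $F \setminus D_{t_*}$ by the union $E \cup D_{t_*}$.

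The main obstacle I anticipate is the uniformity of the threshold $s_0$ and of the chart radii $r_x, r_y$ as $V$ varies over $[V_0, V_1]$, since these a priori depend on the chosen competitor and on the chosen density point. This is addressed by seeking density-zero points at infinity, where the $C^0_{\rm loc}$-asymptotic hypothesis delivers biLipschitz charts with uniform constants (the existence of such points disjoint from any competitor of bounded volume follows, via a pigeonhole argument, from the infinite volume of $M$, inherited from $\mathbb H^n_K$ for $K \le 0$), and by exploiting the uniform perimeter bound $I \le I_K$ together with a compactness argument applied to the radius function of \cref{lem:LocalePiattezzaC0metrics} in order to uniformly handle density-one points.
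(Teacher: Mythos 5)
Your first part ($I\le I_K$) is essentially the paper's argument: push a geodesic ball of $\mathbb H^n_K$ into $M$ via the almost-isometric charts at infinity and correct the volume (the paper corrects it by tuning the radius of the ball inside the chart rather than by an auxiliary perturbation, but this is cosmetic). The overall architecture of the second part — perturb near-minimizers by biLipschitz images of Euclidean balls with uniform constants, uniform chart radius $\overline r$ obtained from the asymptotic hypothesis plus \cref{lem:ApprossimazioneC0Bilipschitz}, and far-away low-density balls for the volume-increasing direction — also matches the paper.

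However, there is a genuine gap in the volume-\emph{decreasing} direction, exactly at the point you flag, and your proposed fix does not close it. The Lebesgue density theorem gives $|F\cap D_t|=(1+o(1))\omega_n t^n$ only with an $o(1)$ whose rate depends on the particular competitor $F$ and on the particular density-one point $x$; hence your threshold $s_0$ and the implicit constant in $t_*\asymp s^{1/n}$ are competitor-dependent. Since for a fixed pair $V<V'$ you must let the minimization error $\eta\to0$, the competitors $F_\eta$ change with $\eta$, and nothing prevents their density rate at the chosen points from degenerating; a compactness argument on the chart radii of \cref{lem:LocalePiattezzaC0metrics} only uniformizes where the charts live, not how fast the density of $F_\eta$ converges to $1$. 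What is needed is a \emph{quantitative} concentration estimate valid for arbitrary sets of finite perimeter: the paper uses \eqref{eq:LocalMassLowerBound} (the analogue of \cite[Lemma 2.10]{ANP}), which produces a point $x_E$ with $|E\cap B_t(x_E)|\ge \widetilde C\min\{|E|^n/P(E)^n, t^n\}$, where $\widetilde C$ depends only on the geometry through the relative isoperimetric inequality in balls (i.e.\ the PI structure of \cref{cor:C0locasymptoticPi}) and Ahlfors regularity. Combined with the uniform perimeter bound $P(E)\le I_K(V_0)+\eps$ coming from the first part, the minimum equals $\widetilde C t^n$ for all $t\le t_0$ with $t_0$ uniform over competitors, which is precisely the input your density-point argument lacks. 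To repair your proof you should replace "fix a density-one point" by an appeal to such a concentration lemma (or reprove it from the relative isoperimetric inequality); as written, the argument only yields a Hölder estimate with constants and a validity range depending on the chosen competitor, which does not suffice to conclude.
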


\begin{proof}
We start by proving that $I\le I_K$. Let $B_t^K(0)\subset \mathbb{H}^n_K$ be a ball with volume $V\in(0,+\infty)$ in $\mathbb{H}^n_K$. If $q_i\in M$ is a diverging sequence of points, for large $i$, up to subsequence, there exist diffeomorphisms $F:(B_{2t}(0), g_K) \to F(B_{2t}(0)) \subset (M,g)$ such that $F(0)=q_i$ and $F$ is $(1+1/i)$-biLipschitz. Hence, there exists $t_i$ such that $t_i\to t$, $E_i\eqdef F(B_{t_i}(0))$ has volume $V$ in $M$, and $P(E_i) \to P(B_t^K(0)) = I_K(V)$. Therefore $I(V) \le \lim_i P(E_i) = I_K(V)$.

Fix $o \in M$. Since $M$ is $C^0_{\rm loc}$-asymptotic to $\mathbb{H}^n_K$, there exists $r>0$ such that for any $p \in M \setminus B_r(o)$ there exists a $2$-biLipschitz map $F_p$ from a Euclidean ball of sufficiently small radius to $M$ with $F_p(0)=p$. Combining with \cref{lem:ApprossimazioneC0Bilipschitz}, we conclude that there exists $\overline{r} \in (0,1)$ such that for any $p \in M$ there exists a $2$-biLipschitz map $F_p:(B_{\overline{r}}^{\rm eu}(0), \dist_{\rm eu} )\to F_p(B_{\overline{r}}^{\rm eu}(0)) \subset M$ such that $F_p(0)=p$. Hence we define the one-parameter family of sets $E_t(p) \eqdef F_p(B_t^{\rm eu}(0))$, for any $p \in M$ and $t \in (0,\overline{r}]$. In particular, recalling also the representation of the perimeter \cref{rem:RepresentationPerimeter} and the fact that the essential boundary is biLipschitz invariant,
\begin{equation}
	P(E_t(p)) \le \overline{C} t^{n-1}, 
	\qquad
	\frac{1}{\overline{C}} t^n \le |E_t(p)| \le \overline{C}t^n,
\end{equation}
for any $p \in M$ and $t \in (0,\overline{r}]$, for some $\overline{C}$ independent of $p,t$.

Combining again the fact that $M$ is $C^0_{\rm loc}$-asymptotic to $\mathbb{H}^n_K$ with item (1) in \cref{lem:TechnicalLemmaC0Manifold} we get that for any $R>0$ there exists a constant $C_R>0$ such that $C_R^{-1}r^n \le |B_r(x)| \le C_R r^n$ for any $x \in M$ and $r\in (0,R)$. Hence recalling that $(M, \dist, \haus^n)$ is also PI by \cref{cor:C0locasymptoticPi}, then it is well-known that there holds a relative isoperimetric inequality in balls of $M$, see \cite[Theorem 5.1]{HajlaszKoskela} and \cite[Remark 4.4]{Ambrosio02}. Since also $\inf_{x\in M} |B_1(x)| >0$ thanks to the asymptotic assumption, it is readily checked that the proof of \cite[Lemma 2.10]{ANP} can be repeated in our setting, yielding that: for any $R>0$ there exists $\widetilde{C}=\widetilde{C}(R)>0$ such that for any $E \subset M$ with $|E| \in (0,+\infty)$ there exists $x_E$ such that
\begin{equation}\label{eq:LocalMassLowerBound}
|E \cap B_r(x_E)| \ge \widetilde{C} \min\left\{ \frac{|E|^n}{P(E)^n} , r^n \right\},	
\end{equation}
for any $r \in (0,R]$.

Fix now $\overline{V} \in (0,+\infty)$ and $\eta \in (0,\overline{V})$. Let also $V_0 \in (\max\{\overline{V}-1, \eta\}, \overline{V}+1)$ and for any $\eps>0$ let $E\subset M$ be a bounded set such that $|E|=V_0$ and $P(E) \le I(V_0) + \eps$ (it is readily checked that, arguing as in \cite[Lemma 2.17]{AFP21}, the isoperimetric profile is achieved by bounded sets).

Let
\[
v\eqdef \min\left\{\min_{x\in M} | E_{\overline{r}}(x)| , 1\right\} \ge \min\{ \overline{C}^{-1}\overline{r}^n, 1\}>0.
\]
Since $E$ is bounded, for any $V \in [V_0,V_0+v)$ there exist $x_V \in M$ and $r_V \le \overline{r}$ such that $|E \cup E_{r_V}(x_V)|=V$ and $|E_{r_V}(x_V)| = V-V_0$. Therefore
\begin{equation*}
	I(V) \le P(E) + P(E_{r_V}(x_V)) \le I(V_0) + \eps + \overline{C} r_V^{n-1}.
\end{equation*}
Since $r_V^n \le \overline{C}(V-V_0)$, letting $\eps\to0$ we conclude that
\begin{equation}\label{eq:Holderianita1}
	I(V) \le  I(V_0) + \overline{C}^{2-\frac1n} (V-V_0)^{\frac{n-1}{n}}.
\end{equation}

We next consider volumes smaller than $V_0$. By \eqref{eq:LocalMassLowerBound} we know that there exists a constant $\widetilde{C}=\widetilde{C}(\overline{r})$ independent of $E$ and a point $x_E$ such that
\[
|E \cap E_t(x_E)| \ge |E \cap B_{\frac{t}{2}}(x_E)| \ge \frac{\widetilde{C}}{2^n} \min\left\{ \frac{|E|^n}{P(E)^n} , t^n \right\} \ge \frac{\widetilde{C}}{2^n} \min\left\{ \frac{V_0^n}{(I_K(V_0)+\eps)^n} , t^n \right\} =  \frac{\widetilde{C}}{2^n} t^n,
\]
for any $t\le t_0$ for some $t_0=t_0(\overline{V}, \eta, n, K, \overline{r}) \in(0, \overline{r})$. Let $\widetilde{C}_2 \eqdef \widetilde{C} 2^{-n} t_0^n>0$. Up to decrease $t_0$, we can assume that $\widetilde{C}_2< \overline{V}$.

If $V_0>\overline{V}-\widetilde{C}_2$, let $V \in ( \max\{\overline{V}-1, \eta, \overline{V}-\widetilde{C}_2\}, \overline{V}+1)$ such that $V < V_0$. Hence there exists $\overline{t} \in (0,t_0)$ such that $|E\setminus E_{\overline{t}}(x_E)| =V$. Similarly as before, we have
\begin{equation*}
	I(V) \le P(E) + P(E_{\overline{t}}(x_E)) \le I(V_0) + \eps +\overline{C} \overline{t}^{n-1}
	\le I(V_0) + \eps + \frac{2^{n-1}\overline{C}}{\widetilde{C}^{\frac{n-1}{n}}}(V_0-V)^{\frac{n-1}{n}},
\end{equation*}
which letting $\eps\to0$ yields
\begin{equation}\label{eq:Holderianita2}
	I(V) \le I(V_0)  + \frac{2^{n-1}\overline{C}}{\widetilde{C}^{\frac{n-1}{n}}}(V_0-V)^{\frac{n-1}{n}}.
\end{equation}

Putting together \eqref{eq:Holderianita1} and \eqref{eq:Holderianita2}, we have proved that there exist $v, C_H(M)>0$ and $\widetilde{C}_2=\widetilde{C}_2(M,\overline{V},\eta)$ such that for any $V_0 \in  ( \max\{\overline{V}-1, \eta, \overline{V}-\widetilde{C}_2\}, \overline{V}+1)$, for any $V \in  ( \max\{\overline{V}-1, \eta, \overline{V}-\widetilde{C}_2\}, \min\{\overline{V}+1, V_0+v\})$ there holds
\begin{equation}\label{eq:Holderianita3}
I(V) \le I(V_0) + C_H|V-V_0|^{\frac{n-1}{n}}.	
\end{equation}
The dependence of the previous constants imply that there exists a neighborhood $U$ of $\overline{V}$ such that \eqref{eq:Holderianita3} holds for any choices of $V,V_0 \in U$. This implies the desired local H\"{o}lder continuity.
\end{proof}

The next theorem is based on a concentration-compactness argument that has been used several times in the literature, applied to the study of the isoperimetric problem in noncompact manifolds. In the smooth setting it has been first obtained in \cite[Theorem 2]{Nardulli}. See also \cite[Theorem 4.48]{RitoreBook} and references therein for a complete account.  Results analogous to \cref{thm:MassDecompositionC0} have been worked out also in the setting of nonsmooth spaces with bounds below on the curvature, see \cite[Theorem 4.6]{AFP21} and  \cite[Theorem 3.3 \& Theorem 1.1]{ANP}.

\begin{theorem}[Asymptotic mass decomposition under $C^0_{\rm loc}$-asymptotic assumptions]\label{thm:MassDecompositionC0}
Let $(M, g)$ be an $n$-dimensional complete $C^0$-Riemannian manifold and assume that $M$ is $C^0_{\rm loc}$-asymptotic to the $n$-dimensional simply connected complete model $\mathbb H^n_K$ of constant sectional curvature $K\le 0$. Fix $o \in M$. Let $V>0$ and let $E_i\subset M$ be a sequence of bounded sets such that $|E_i|=V$ for any $i$ and $\lim_i P(E_i)= I(V)$.

Then, up to subsequence, one of the following alternatives holds true.
\begin{itemize}
    \item The sequence $E_i$ converges in $L^1(M)$ to an isoperimetric set $E$ of volume $V$.

    \item There exist two sequences of radii $R_i, r_i\nearrow+\infty$ with $R_i<r_i$ and a diverging sequence of points $p_i \in M\setminus B_{R_i}(o)$ such that $E_i^c\eqdef E_i \cap B_{R_i}(o)$ converges in $L^1(M)$ to a (possibly empty) isoperimetric set $E$, and $E_i^d\eqdef E_i \cap B_{r_i}(p_i) \setminus B_{R_i}(o)$ converges
    to a ball $B\subset \mathbb H^n_K$ in the sense that
    \[
    \lim_i P(E_i^d) = P(B) , 
        \qquad
    \lim_i |E_i^d| = |B|.
    \]
    Moreover
    \begin{equation*}
        V= |E| + |B|,
        \qquad
        I(V) = P(E) + P(B) .
    \end{equation*}
\end{itemize}
\end{theorem}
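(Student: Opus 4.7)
The plan is a concentration-compactness argument adapted to the $C^0$-setting, with the main inputs being the BV precompactness (\cref{lem:PrecompactnessBV}), the uniform local doubling and relative Poincaré/isoperimetric inequalities from \cref{lem:TechnicalLemmaC0Manifold} and \cref{cor:C0locasymptoticPi}, and the fact that metric balls are the unique isoperimetric regions in the model $\mathbb{H}^n_K$ with $K\le 0$. A diagonal application of \cref{lem:PrecompactnessBV} over an exhaustion by balls $B_{R_k}(o)$, $R_k\nearrow\infty$, together with lower semicontinuity of the perimeter under $L^1_{\rm loc}$-convergence, yields a subsequence along which $\chi_{E_i}\to\chi_E$ in $L^1_{\rm loc}(M)$ for some set of finite perimeter $E\subset M$ with $|E|\le V$ and $P(E)\le \liminf_i P(E_i)=I(V)$. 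Set $m_0\eqdef |E|$ and $v\eqdef V-m_0\ge 0$. If $v=0$, conservation of mass upgrades the convergence to global $L^1(M)$-convergence; lower semicontinuity of $P$ then gives $P(E)\le I(V)=I(|E|)$, so $E$ is an isoperimetric set of volume $V$ and the first alternative holds.

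Assume now $v>0$. Via the coarea formula (\cref{thm:Coarea}) applied to the finite measures $|D\chi_{E_i}|$ I select radii $R_i\to\infty$ along which $|E_i\cap B_{R_i}(o)|\to m_0$, $|E_i\setminus B_{R_i}(o)|\to v$, and the perimeter contributions concentrated on $\partial B_{R_i}(o)$ become negligible in the limit. A standard concentration argument on $E_i\setminus B_{R_i}(o)$, based on the uniform positive lower bound on unit-ball volumes and the relative isoperimetric inequality supplied by \cref{lem:TechnicalLemmaC0Manifold} and \cref{cor:C0locasymptoticPi}, furnishes a sequence $p_i$ with $\dist(p_i,o)\to\infty$ and a radius $\rho>0$ such that $\liminf_i|E_i\cap B_\rho(p_i)|>0$. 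Since $M$ is $C^0_{\rm loc}$-asymptotic to $\mathbb{H}^n_K$, the pointed manifolds $(M,g,p_i)$ converge in $C^0$-sense to $(\mathbb{H}^n_K,g_K,o_K)$, so a further application of \cref{lem:PrecompactnessBV} produces a nontrivial limit $F\subset\mathbb{H}^n_K$ of finite volume and a sequence $r_i\to\infty$ (chosen again via coarea to avoid perimeter on the boundary annuli) along which the sets $E_i^d\eqdef E_i\cap B_{r_i}(p_i)\setminus B_{R_i}(o)$ satisfy $|E_i^d|\to|F|$ and $P(E_i^d)\to P(F)$.

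Iterating the bubble extraction on the residual escaping mass produces at most countably many bubbles $F_1,F_2,\ldots\subset\mathbb{H}^n_K$ with $\sum_k|F_k|\le v$ and
\begin{equation*}
I(V)=\lim_i P(E_i)\ge P(E)+\sum_k P(F_k)\ge P(E)+\sum_k I_K(|F_k|),
\end{equation*}
the last inequality by the isoperimetric inequality in $\mathbb{H}^n_K$. In the opposite direction, arguing as in the proof of $I\le I_K$ in \cref{cor:Continuity}, the $C^0_{\rm loc}$-asymptoticity allows to transplant any metric ball $B\subset\mathbb{H}^n_K$ of volume $v$ into $M$ at arbitrarily large distance from $o$ and from a compact set containing (essentially) $E$, giving the competitor estimate $I(V)\le P(E)+I_K(v)$. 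Because $I_K$ on $\mathbb{H}^n_K$ with $K\le 0$ is strictly concave and strictly subadditive, and is achieved only by metric balls, the only way both chains of inequalities can close up is that precisely one bubble is nontrivial, equal to a ball $B$ of volume exactly $v$, that all escaping mass is captured by $B$, and that $P(E)=I(m_0)$. This yields $V=|E|+|B|$, $I(V)=P(E)+P(B)$, the $L^1(M)$-convergence of $E_i^c\eqdef E_i\cap B_{R_i}(o)$ to $E$, and the volume/perimeter convergence of $E_i^d$ to $B$, i.e.\ the second alternative in the statement.

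I expect the principal obstacle to be ruling out an infinite cascade of bubbles drifting to mutually diverging locations: the strict subadditivity of $I_K$ for $K\le 0$, combined with the possibility of realizing any model ball as an admissible competitor in $M$ at infinity through the $C^0_{\rm loc}$-asymptotic hypothesis, is exactly the mechanism that reduces the a priori multi-bubble decomposition to a single ball. A secondary delicate point is to decouple perimeter cleanly across the annular regions $\partial B_{R_i}(o)$ and $\partial B_{r_i}(p_i)$; this is arranged by the coarea formula applied to the finite measures $|D\chi_{E_i}|$, whose total masses $P(E_i)$ are uniformly bounded by $I(V)+o(1)$.
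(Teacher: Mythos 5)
Your proposal is correct and follows essentially the same route as the paper: a concentration--compactness/asymptotic mass decomposition (the paper cites the machinery of Ritor\'e--Rosales and of \cite{AFP21, ANP}, built on \cref{lem:PrecompactnessBV}, \cref{thm:Coarea}, \cref{cor:C0locasymptoticPi}) yielding a convergent part plus at most countably many bubbles in $\mathbb H^n_K$, followed by the reduction to a single bubble via strict subadditivity of $I_K$ against the competitor obtained by truncating $E$ and transplanting a model ball at infinity. The only point to tighten is that the bubble extraction must capture \emph{all} of the escaping volume ($\sum_k|F_k|=v$, not merely $\le v$), which follows from the no-vanishing argument you allude to (bounded perimeter plus the uniform relative isoperimetric inequality in unit balls).
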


\begin{proof}
{
Since the proof of \cref{thm:MassDecompositionC0} is standard and follows closely the strategy of \cite{AFP21, ANP} we just sketch it. 

At first, one gets the analogue of Ritoré--Rosales' result \cite[Theorem 3.3]{ANP} in the setting of \cref{thm:MassDecompositionC0}. Indeed, the proof of \cite[Theorem 3.3]{ANP} uses the coarea formula, the precompactness of $BV$ in $L^1_{\mathrm{loc}}$, and the existence, around every point $p\in M$, of a one-parameter family $\{\mathcal{B}_{p,r}\}_{r\in (0,\varepsilon)}$ of sets such that $r\mapsto |\mathcal{B}_{p,r}|$ is continuous and vanishing as $r\to 0$, and $P(\mathcal{B}_{p,r})\to 0$ as $r\to 0$. The first two come from \cref{lem:PrecompactnessBV}, and \cref{thm:Coarea}, while for the last one it suffices to take pre-images of small Euclidean balls under the map in the second item of \cref{lem:LocalePiattezzaC0metrics}, as it has been done in the proof of \cref{cor:Continuity}. Once this is done, one follows verbatim the proof of \cite[Theorem 4.6]{AFP21}, which additionally needs that $(M,\dist,\mathcal{H}^n)$ is PI, and $|B_r(p)|\geq \zeta r^3$ for every $r\in (0,R]$, and every $p\in M$, where $\zeta,R>0$ are constants depending on $M$: these two properties come from \cref{cor:C0locasymptoticPi} and from its proof (compare also with item (1) in \cref{lem:TechnicalLemmaC0Manifold}). 

By \cref{cor:C0locasymptoticPi}, the metric measure space $(M, \dist, \haus^n)$ is PI, hence it satisfies a relative local isoperimetric inequality (as already explained in the proof of \cref{cor:Continuity}). We can thus directly
follow \cite[Theorem 4.6]{AFP21} until (4.20) there, and then jump to Step 5 there. One gets the following. There is $\overline{N}\in \N  \cup\{+\infty\}$ and radii $R_i\to +\infty$, $T_{i,j}\to_i+\infty$ for  $1\le j< \overline{N}+1$, and there are mutually (with respect to $j$) diverging $p_{i,j}\in M\setminus B_{R_i}(o)$ such that 
\[
E_i^c \xrightarrow[i]{} E \text{ in $L^1(M)$,}
\qquad
(E_i\setminus B_{R_i}(o)) \cap B_{T_{i,j}}(p_{i,j}) \xrightarrow[i]{} B_j \text{ in $L^1(M)$,}
\]
where $B_j$ is a ball in $\mathbb H^n_K$ for any $j< \overline{N}+1$, and $E$ is an isoperimetric set in $M$. The convergence in $L^1$ to the $B_j$'s has to be intended as in the statement of \cref{lem:PrecompactnessBV}, after the composition with biLipschitz embeddings. Moreover one has 
\[
|E|+\sum_{j=1}^{\overline{N}}|B_j| = V, \qquad P(E) + \sum_{j=1}^{\overline{N}} I_K (|B_j|) =
    P(E) + \sum_{j=1}^{\overline{N}} P(B_j) = I(V),
\]
where $I_K$ is the isoperimetric profile of $\mathbb H^n_K$.
Hence either $\overline{N}=0$ and the first item holds, or $\overline{N}\ge1$. In the latter case, we want to show that $\overline{N}=1$, completing the proof of the second item.} By coarea formula we can fix a sequence $\rho_i\nearrow+\infty$ such that $P(E \cap B_{\rho_i}(o) )\le P(E) + 1/i$. For any $i$ we find balls $B_{s_i}(q_i) \subset M\setminus  B_{\rho_i+1}(o)$ such that $|B_{s_i}(q_i)| = V-|E\cap B_{\rho_i}(o)|$ and such that $B_{s_i}(q_i)$ converges to a ball $B\subset \mathbb H^n_K$ with $\lim_i P(B_{s_i}(q_i)) = P(B)$ and $|B|=V-|E|$. If by contradiction $\overline{N}>1$, since the isoperimetric profile $I_K$ is a strictly subadditive function, we get
\[
\begin{split}
    P(E) + \sum_{j=1}^{\overline{N}} I_K (|B_j|) &=
    P(E) + \sum_{j=1}^{\overline{N}} P(B_j) = I(V) \le \liminf_i P(E\cap B_{\rho_i}(o)) + P(B_{s_i}(q_i)) \\&= P(E) + I_K(|B|) 
    \leq P(E) + I_K\left(\sum_{j=1}^{\overline{N}}|B_j|\right) < P(E) + \sum_{j=1}^{\overline{N}} I_K (|B_j|),
\end{split}
\]
which is a contradiction.
\end{proof}

\section{Local inverse mean curvature flow}\label{sec:IMCFWithGlobalIsoperimetric}

We start by recalling the definition of weak inverse mean curvature flow (IMCF) as introduced in \cite{HuiskenIlmanen}.
\begin{definition}[Weak IMCF - Level set formulation]\label{def:WeakIMCFLevelSet}
    Let $(M,g)$ be a smooth complete Riemannian manifold. Given a precompact  $K\subset M$, a locally Lipschitz function $u:M\to\mathbb R$, and a set of locally finite perimeter $E$, define
    \[
    J_u^K(E):=P(E,K)-\int_{E\cap K}|\nabla u|.
    \]

    Let $\Omega\subset M$ be an open set. A function $u\in\mathrm{Lip}_{\mathrm{loc}}(\Omega)$ is called a \textit{weak solution} (resp., subsolution, supersolution) to the inverse mean curvature flow (IMCF) in $\Omega$ if
\[
J_u^K(\{u<t\})\leq J_u^K(E),
\]
for all $t\in\mathbb R$, all $K\subset\subset \Omega$, and all sets $E$ (resp., $E\supset \{u<t\}$, $E\subset \{u<t\}$) such that $E\Delta \{u<t\}\subset K$.
\end{definition}

\begin{remark}
    By virtue of \cite[Lemma 1.1]{HuiskenIlmanen}, a function $u\in\mathrm{Lip}_{\mathrm{loc}}(\Omega)$ is a weak solution the IMCF in $\Omega$ if and only if
    \begin{equation}\label{eqn:IMCFFunctionFormulation}
    \int_K |\nabla u|+u|\nabla u| \leq \int_K |\nabla v|+v|\nabla u|,
    \end{equation}
    for all $K\subset\subset\Omega$, and all $v\in\mathrm{Lip}_{\mathrm{loc}}(\Omega)$ such that $\{u\neq v\}\subset K$.
\end{remark}
The aim of this section is to show \cref{thm:existenceimcfisoplocalized}, stating that on every punctured ball $B$ centered at $o$ on a smooth complete Riemannian manifold one can define a weak IMCF that is bounded from below explicitly in terms of constants that will nicely behave on metrics $C^0$-close to the flat one. We also gather useful properties of this flow in \cref{rem:PropertiesOfFlow}, and \cref{sec:Connectedness}.
\begin{definition}\label{def:SobolevInequality}
    Let $(M,g)$ be an $n$-dimensional complete $C^0$-Riemannian manifold, and let $1\leq p<n$. Let $\Omega\subset M$ be an open set. We say that \textit{$\Omega$ supports a $(p,p^*)$-Sobolev inequality} if there exists a constant $C>0$ such that
    \begin{equation}\label{eqn:pSobolevInequality}
    \left(\int_M |\psi|^{\frac{np}{n-p}}\right)^{\frac{n-p}{n}} \leq C\int_M |\nabla\psi|^p\,\quad \text{for all $\psi\in\mathrm{Lip}_{\mathrm{c}}(\Omega)$}.
    \end{equation}
    We denote $C_{p,\mathrm{Sob}}(\Omega)$ the smallest constant $C$ for which the latter inequality holds.
\end{definition}

\begin{remark}\label{rem:SobolevToIso}
    It is known that \eqref{eqn:pSobolevInequality} with $p=1$ is equivalent to
    \[
    |E|^{\frac{n-1}{n}}\leq C P(E),\quad \text{for all bounded measurable $E\subset\subset\Omega$}.
    \]
    Indeed, one implication readily comes from the very definition in \eqref{eqn:VariazioneTotale}, while the other comes from an application of the coarea formula in \cref{thm:Coarea}. The latter implication is classical and dates back at least to works of Federer--Fleming and Maz'ya in the 60s, see, e.g., \cite[page 488]{FedererFleming}.
\end{remark}

Let $(M,g)$ be a smooth complete Riemannian manifold. We now provide solutions to the weak IMCF in punctured balls $B_R(o) \setminus \{o\}$. The weak IMCF is obtained in the limit, as $p\to 1^+$, of functions $w_p^R := -(p-1) \log G_p^R$, where $G_p^R$, for $p \in (1, n)$ denotes the $p$-harmonic Green function on $B_R(o)$ with Dirichlet boundary conditions. Namely, $G_p^R$ is the solution to
\begin{equation}
\label{eq:def-green}
\begin{cases}
-\Delta_p G_p^R = \abs{\mathbb{S}^{n-1}}\left(\frac{n-p}{p-1}\right)^{p-1} \delta_o  & \text{on }  B_R(o), \\
\phantom{-\Delta_p} G_p^R = 0  & \text{on }  \partial B_R(o), 
\end{cases}
\end{equation}
where $\delta_o$ is the Dirac delta supported at $o$, and $\abs{\mathbb{S}^{n-1}}$
is the measure of the $(n-1)$-dimensional unit sphere. With the above choice of normalization, it  follows from the blow-up procedure leading to \cite[Theorem 1.1]{kichenassamy-veron} that
\begin{equation}
\label{eq:asygreen}
\begin{split}
\left|\frac{G_p^R(x)}{r(x)^{-{(n-p)}/{(p-1)}}} -1\right| \to 0, 
\qquad
\left|\frac{\abs{\nabla G_p^R(x)}}  {r(x)^{-{(n-1)}/{(p-1)}}} -\frac{n-p}{p-1}\right| \to 0
\end{split}
\end{equation}
as $r(x) \to 0$, where we let $r(x) \eqdef \dist(o, x)$.
A full proof of \eqref{eq:asygreen} can be found in the recent \cite[Theorem 2.1]{bmrsx-preprint}.
We recall that the relative $p$-capacity of a compact $K \Subset B_R(o)$ is defined as
\begin{equation}
    \label{eq:p-cap}
    \mathrm{Cap}_p(K, B_R(o)) := \inf\left\{\int_{B_R(o) \setminus K} \abs{\nabla v}^p \st v\in \mathrm{Lip_c} (B_R(o)), v \geq \chi_K\right\}. 
\end{equation}
The following lemma is well known, and consists essentially in \cite[Lemma 3.8]{holopainen-thesis}. 

\begin{lemma}
\label{lem:expcap}
Let $(M, g)$ be a smooth complete Riemannian manifold, and let $o \in M$, $R > 0$. Let $E_t^R := \{w_p^R \leq t\}$ for $p \in (1, n)$, where $w_p^R \eqdef -(p-1) \log G_p^R$ and $G_p^R$ solves \eqref{eq:def-green}. Then
\begin{equation}
\label{eq:expcap}
    \mathrm{Cap}_p(E_t^R, B_R(o)) = \left(\frac{n-p}{p-1}\right)^{p-1}\abs{\mathbb{S}^{n-1}}e^t ,
\end{equation}
for every $t\in\mathbb R$.
\end{lemma}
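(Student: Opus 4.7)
The plan is to identify $E_t^R$ as a superlevel set of $G_p^R$ and realize the $p$-capacity by the obvious candidate. Since $w_p^R=-(p-1)\log G_p^R$, setting $\alpha\eqdef e^{-t/(p-1)}$ one has $E_t^R=\{G_p^R\ge\alpha\}$. The natural capacitor is
\[
v\eqdef \min\!\bigl\{\alpha^{-1}G_p^R,\,1\bigr\},
\]
which equals $1$ on $E_t^R$, has vanishing boundary values on $\partial B_R(o)$ (since $G_p^R|_{\partial B_R(o)}=0$), and is $p$-harmonic on $B_R(o)\setminus E_t^R$ because $G_p^R$ is $p$-harmonic on $B_R(o)\setminus\{o\}$. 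A standard truncation such as $v_\delta\eqdef \min\{((1+\delta)v-\delta)_+,1\}$ produces admissible competitors in the definition \eqref{eq:p-cap} with $\int|\nabla v_\delta|^p\to\int|\nabla v|^p$, so by the minimizer characterization of $p$-capacity
\[
\mathrm{Cap}_p(E_t^R,B_R(o))=\int_{B_R(o)\setminus E_t^R}|\nabla v|^p=\alpha^{-p}\int_{B_R(o)\setminus E_t^R}|\nabla G_p^R|^p.
\]

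To evaluate the remaining integral I would test the distributional Green-function equation \eqref{eq:def-green} against $v$ itself, rather than invoking the divergence theorem on possibly rough level sets. Since $v\in W^{1,p}_0(B_R(o))$ and $v$ is continuous at $o$ with $v(o)=1$ (as $G_p^R\to+\infty$ at $o$ by \eqref{eq:asygreen}), a density argument gives
\[
\int_{B_R(o)}|\nabla G_p^R|^{p-2}\langle\nabla G_p^R,\nabla v\rangle=\left(\frac{n-p}{p-1}\right)^{p-1}\!|\mathbb{S}^{n-1}|\,v(o)=\left(\frac{n-p}{p-1}\right)^{p-1}\!|\mathbb{S}^{n-1}|.
\]
On the other hand $\nabla v=\alpha^{-1}\nabla G_p^R$ on $B_R(o)\setminus E_t^R$ and $\nabla v=0$ on $E_t^R$, so the left-hand side equals $\alpha^{-1}\int_{B_R(o)\setminus E_t^R}|\nabla G_p^R|^p$. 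Combining with the previous display and using $\alpha^{1-p}=e^t$ yields
\[
\mathrm{Cap}_p(E_t^R,B_R(o))=\alpha^{-p}\cdot\alpha\left(\frac{n-p}{p-1}\right)^{p-1}\!|\mathbb{S}^{n-1}|=e^t\left(\frac{n-p}{p-1}\right)^{p-1}\!|\mathbb{S}^{n-1}|,
\]
which is \eqref{eq:expcap}.

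The only delicate point is justifying that $v$ is an admissible test function both as capacity competitor and in the distributional Green-function equation, since $v$ is not itself compactly supported in $B_R(o)$ and the equation carries a Dirac mass. This reduces to a standard density/truncation argument: $v_\delta$ above has compact support in $B_R(o)\setminus\{o\}$ for small $\delta$ (as $G_p^R\to0$ on $\partial B_R(o)$ and $G_p^R\to+\infty$ at $o$), converges to $v$ in $W^{1,p}_0(B_R(o))$, and satisfies $v_\delta(o)\to 1$. An equivalent route, closer to \cite[Lemma 3.8]{holopainen-thesis}, is to use coarea to write $\int_{B_R(o)\setminus E_t^R}|\nabla G_p^R|^p=\int_0^{\alpha}\!\!\int_{\{G_p^R=s\}}|\nabla G_p^R|^{p-1}\,d\mathcal{H}^{n-1}\,ds$ and to observe that the inner flux is constant in $s$ (by the divergence theorem on regular level sets, which exist for a.e. $s$ by Sard), with value computed from the asymptotics \eqref{eq:asygreen} as $s\to+\infty$.
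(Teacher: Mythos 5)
Your proof is correct, and the overall skeleton (realize the capacity by the capacitary potential $\min\{\alpha^{-1}G_p^R,1\}$ built from the Green function) coincides with the paper's. Where you differ is in the evaluation of $\int_{\{G_p^R<\alpha\}}|\nabla G_p^R|^p$: the paper slices by the coarea formula, shows that the flux $\int_{\{G_p^R=s\}}|\nabla G_p^R|^{p-1}\,d\mathcal{H}^{n-1}$ is independent of $s$ via the divergence theorem, and then evaluates that constant from the gradient asymptotics \eqref{eq:asygreen} at the pole; you instead test the distributional equation \eqref{eq:def-green} against the capacitor $v$ itself, so the Dirac normalization hands you the constant directly through $v(o)=1$, and you only need the blow-up $G_p^R\to+\infty$ at $o$ rather than the precise gradient asymptotics. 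Your route is slightly cleaner in that it sidesteps the discussion of regular level sets, at the modest price of justifying that $v$ is an admissible test function for the measure-valued equation — which you correctly reduce to a truncation argument. One small slip there: $v_\delta$ does \emph{not} have compact support in $B_R(o)\setminus\{o\}$, since $v_\delta\equiv 1$ on a neighborhood of $o$; what is true, and what you actually need, is that $\nabla v_\delta$ is supported in a compact subset of $B_R(o)\setminus\{o\}$ (away from the pole because $v\equiv1$ near $o$, away from $\partial B_R(o)$ because $G_p^R$ vanishes there), so the pairing with $|\nabla G_p^R|^{p-2}\nabla G_p^R$ is unproblematic. With that wording fixed, the argument is complete and the final bookkeeping $\alpha^{1-p}=e^t$ is right.
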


\begin{proof}
In this proof, let $G_p := G_p^{R}$, and $w_p := w_p^{R}$ for the ease of notation.
It is well-known that the $p$-capacity defined in \eqref{eq:p-cap} is attained computing the $L^p$-norm of the gradient of the $p$-harmonic function with Dirichlet data equal to $1$ on $\partial K$, and equal to $0$ on $\partial B_R(o)$ (see \cite{heinonen-kilpelainen-martio} for a thorough account on nonlinear potential theory). In particular, one gets, for every $t\in (0,+\infty)$,
\begin{equation}
    \label{eq:capGp}
    \mathrm{Cap}_p(\{G_p \geq t\}, B_R(o)) = \frac{1}{t^p}\int_{\{G_p < t\}} \abs{\nabla G_p}^{p} = \frac{1}{t^p}\int_0^t\int_{\{G_p = s\}} \abs{\nabla G_p}^{p-1} \de \haus^{n-1} \de s.
\end{equation}
On the other hand, a straightforward application of the divergence theorem combined with the $p$-harmonicity of $G_p$ (see e.g. the computations in \cite[Proposition 2.8]{benatti2022minkowski}) yields that $\int_{\{G_p = s\}} \abs{\nabla G_p}^{p-1}$ attains the same value for almost every $s\in (0,+\infty)$. Such constant is computed using \eqref{eq:asygreen} as
\begin{equation*}
    \label{eq:constantconasygreen}
    \int_{\{G_p = s\}} \abs{\nabla G_p}^{p-1}  \de \haus^{n-1}  = \left(\frac{n-p}{p-1}\right)^{p-1}\abs{\mathbb{S}^{n-1}},
\end{equation*}
for almost every $s \in (0, + \infty)$. Plugging it into \eqref{eq:capGp} leaves us with 
\begin{equation}
\label{eq:p-capalmost}
\mathrm{Cap}_p(\{G_p \geq t\}, B_R(o)) = \frac{1}{t^{p-1}} \left(\frac{n-p}{p-1}\right)^{p-1}\abs{\mathbb{S}^{n-1}},
\end{equation}
for any $t \in (0, + \infty)$.
Rewriting it in terms of $w_p$ as stated in \eqref{eq:expcap} completes the proof.
\end{proof}

We denote with $C_P(B_R(o))$ the \textit{Poincar\'e constant of $B_R(o)$}, defined as the smallest constant $C$ such that
\begin{equation}\label{eqn:PoincareConstant}
\fint_{B_r(x)}\left|f-\fint_{B_r(x)}f\right|\leq Cr\fint_{B_{2r}(x)}|\nabla f|
\end{equation}
holds for every $x\in B_R(o)$, $r\leq R$, and $f \in \Lip_{\rm loc}(M)$. We denote with $C_A (B_R(o))$ the \textit{Ahlfors constant  of $B_R(o)$}, defined as the smallest number $C\geq 1$ such that
\begin{equation}\label{eqn:AhlforsConstant}
C^{-1} r^n \leq \abs{B_r(x)} \leq C r^n,
\end{equation}
for every $x \in B_R(o)$, and every $0 <r \leq R$.

Finally, denoting $A_{\rho_1,\rho_2}(o)\eqdef B_{\rho_2}(o)\setminus \overline{B}_{\rho_1}(o)$ for $\rho_2>\rho_1$, we denote
\begin{equation}\label{eqn:CCov}
\begin{split}
C_{\mathrm{cov}} (B_\rho(o))\eqdef \min\Big\{N\in\mathbb N \st 
&\text{$A_{3r/4,5r/4}(o)$ is covered by $N$ open balls of radius $r/2$} \\
&\text{with centers in $A_{3r/4,5r/4}(o)$ for any $0<r\leq \rho$} \Big\}.
\end{split}
\end{equation}

Observe that on any complete smooth Riemannian manifold $(M,g)$ of dimension $n\ge2$, for any $o \in M$ and $R>0$ there exists $\rho\in(0, R/2]$ such that
\begin{equation}\label{eqn:CondizioneConnessione}
    \text{$\forall 0<r\leq \rho,\,\, \forall p,q \in \partial B_{r}(o)\,\, \exists$ continuous curve $\gamma\subset A_{3r/4,5r/4}(o)$ connecting $p$ and $q$.}
\end{equation}

\begin{theorem}
\label{thm:existenceimcfisoplocalized}
Let $(M,g)$ be a complete smooth Riemannian manifold of dimension $n\ge 2$. Fix $o\in M$, and $R>0$, and let $p\in (1,n)$. Let $w_p^{2R} = -(p-1) \log G_p^{2R}$, with $G_p^{2R}$ as in \eqref{eq:def-green}. Let $\rho \in (0,R/2]$ be such that \eqref{eqn:CondizioneConnessione} is satisfied. Then, the following hold.
\begin{enumerate}
\item The sequence of functions $w_p^{2R}$ converges, up to subsequence,
locally uniformly in $B_{2R}(o)\setminus\{o\}$ as $p \to 1^+$ to a weak solution $w$ of the IMCF on $B_{2R}(o) \setminus \{o\}$.
\item The function $w$ satisfies
\begin{equation}\label{eq:imcfproper}
        w(x) \geq (n-1) \log r(x) -  C, \qquad \text{for all $x\in \overline{B}_{R}(o)\setminus\{o\}$},
    \end{equation}
where $C=C\big(n, C_{1, \mathrm{Sob}}(B_{2R}(o)), C_P(B_{2R}(o)), C_A(B_{2R}(o)), R/\rho, C_{\mathrm{cov}}(B_\rho(o))\big)$, and $r(x)\eqdef \dist(o,x)$.
\item It holds $w(x)\to -\infty$ as $x\to o$.
\item For every $r\leq R$, letting $T_r:=(n-1)\log r-C-1$, there holds $\{w\leq T_r\}\subset B_r(o)$.
\end{enumerate}
\end{theorem}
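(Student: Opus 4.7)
The plan is to follow Moser's scheme \cite{MoserIMCF}: define $w$ as the local uniform limit, along a subsequence $p\to 1^+$, of $w_p^{2R}=-(p-1)\log G_p^{2R}$, so that the weak IMCF property of the limit comes from the $p\to 1$ limit of the variational problem solved by $G_p^{2R}$, as in \cite[Theorem 2.1]{MoserIMCF}. All the content of (1)--(4) is then reduced to producing $p$-uniform bounds on $w_p^{2R}$ with explicit dependence on the geometric constants listed in the statement.

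Three estimates drive the argument. First, plugging the capacity identity \cref{lem:expcap} into the $(p,p^*)$-Sobolev inequality on $B_{2R}(o)$ — whose constant is controlled in terms of $C_{1,\mathrm{Sob}}(B_{2R}(o))$ via a standard truncation argument — yields
\[
|\{w_p^{2R}\le t\}|^{(n-p)/n} \le C_{p,\mathrm{Sob}}(B_{2R}(o))\, |\mathbb{S}^{n-1}|\left(\tfrac{n-p}{p-1}\right)^{p-1} e^t,
\]
which in the limit $p\to 1^+$ gives $|\{w\le t\}|^{(n-1)/n}\le C_1 e^t$ with $C_1=C_1(n, C_{1,\mathrm{Sob}}(B_{2R}(o)))$. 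Second, applying the sharp local gradient estimate of \cite{kotschwar_localgradientestimatesharmonic_2009} to $G_p^{2R}$ gives a $p$-uniform bound $|\nabla w_p^{2R}|\le C_2$ on $\overline{B}_R(o)\setminus B_\eta(o)$ for every $\eta>0$, with $C_2$ depending only on $n$, $R$, $\eta$. Third, the elliptic Harnack inequality of \cite{salvatori-rigoli-vignati} for $G_p^{2R}$ on balls $B_s(x)\subset B_{2R}(o)\setminus\{o\}$ has constants controlled by $C_A(B_{2R}(o))$ and $C_P(B_{2R}(o))$; chaining it along a covering of the annulus $A_{3r/4,5r/4}(o)$ by at most $C_{\mathrm{cov}}(B_\rho(o))$ balls of radius $r/2$ — available since \eqref{eqn:CondizioneConnessione} holds on $(0,\rho]$ — gives
\[
\sup_{\partial B_r(o)} w_p^{2R} - \inf_{\partial B_r(o)} w_p^{2R} \le (p-1)\log C_3
\]
for every $r\le \rho$, with $C_3$ depending on $n$, $C_A(B_{2R}(o))$, $C_P(B_{2R}(o))$, $C_{\mathrm{cov}}(B_\rho(o))$.

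Combining the gradient bound with the Harnack oscillation bound, the family $\{w_p^{2R}\}$ is equicontinuous on $\overline{B}_R(o)\setminus B_\eta(o)$ for each $\eta>0$, and equibounded once one anchors $w_p^{2R}$ at a reference point $x_0\in\partial B_R(o)$ by comparison with the explicit $p$-harmonic radial solution on $B_{2R}(o)$, as in \cite{MoserIMCF}. Arzelà--Ascoli and a diagonal extraction give a subsequence $w_p^{2R}\to w$ locally uniformly on $B_{2R}(o)\setminus\{o\}$, and the argument of \cite[Theorem 2.1]{MoserIMCF} shows that $w$ is a weak solution of the IMCF on $B_{2R}(o)\setminus\{o\}$, proving (1).

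Passing the Harnack bound to the limit forces $w$ to be a function of $r(x)$ on each sphere $\partial B_r(o)$ with $r\le\rho$, so for $t$ small enough $\{w\le t\}\cap B_\rho(o)=B_{r_t}(o)$ for some $r_t$. The volume estimate together with $|B_{r_t}(o)|\ge C_A^{-1}r_t^n$ then forces $r_t\le C_4 e^{t/(n-1)}$, yielding (2) on $\overline{B}_\rho(o)\setminus\{o\}$. For $\rho<r(x)\le R$, (2) reduces to a constant lower bound that follows from the gradient estimate and the value of $w$ already controlled on $\partial B_\rho(o)$, introducing a further dependence on $R/\rho$. Statements (3) and (4) follow at once from (2): $w(x)\to-\infty$ as $r(x)\to 0$, and if $r(x)\ge r$ then $w(x)\ge (n-1)\log r - C > T_r$, so $\{w\le T_r\}\subset B_r(o)$. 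The main technical point is pushing the Harnack chain through the $p\to 1^+$ limit while keeping the dependence on the Ahlfors, Poincaré, and covering constants explicit, so that the resulting constant $C$ admits the stated form.
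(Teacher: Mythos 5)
Your overall architecture is the paper's: capacity identity for the sublevel sets of $w_p^{2R}$, a Sobolev/isoperimetric lower bound on capacity, a Harnack chain with constants controlled by $C_P$, $C_A$, $C_{1,\mathrm{Sob}}$, the Kotschwar--Ni gradient bound, comparison with the radial $p$-harmonic solution for the upper bound, and Moser's limit argument for the weak IMCF property. Two steps, however, do not go through as written.

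First, the constant in item (2). For $r(x)\in(\rho,R]$ you propagate the lower bound from $\partial B_\rho(o)$ using "the gradient estimate", and earlier you assert that the Kotschwar--Ni constant depends "only on $n$, $R$, $\eta$". That is not true: the constant in \cite[Theorem 1.1]{kotschwar_localgradientestimatesharmonic_2009} depends on curvature bounds on the relevant compact set, which are \emph{not} among the admissible dependencies of $C$ in \eqref{eq:imcfproper}. This is not cosmetic: in the application (\cref{prop:GrossiQuantoVoglio}) the estimate must be uniform along a sequence of smooth metrics $g_i$ converging only in $C^0$, whose curvatures are in general unbounded. The paper avoids this by extending the Harnack chain itself beyond $\rho$: one chains balls of radius $\rho/4$ along minimizing geodesics from $\partial B_r(o)$ down to $\partial B_\rho(o)$, with at most $\sim R/\rho$ balls, so the oscillation bound on $\partial B_r(o)$ for $r\in(\rho,\tfrac32R)$ still only involves $n$, $C_P$, $C_A$, $C_{1,\mathrm{Sob}}$, $C_{\mathrm{cov}}$, and $R/\rho$. (A smaller but related point: "Harnack forces $w$ to be constant on spheres, hence sublevel sets are balls" is not a valid implication by itself — a radial function can have annular sublevel sets. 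You need the maximum principle, i.e. $B_r(o)\subset\{w_p\le \max_{\partial B_r(o)}w_p\}$ since $w_p\to-\infty$ at $o$, before comparing volumes.)

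Second, item (4) does not "follow at once from (2)". Estimate \eqref{eq:imcfproper} is only asserted on $\overline{B}_R(o)\setminus\{o\}$ (and your argument only produces it there), so your deduction yields $\{w\le T_r\}\cap\overline{B}_R(o)\subset B_r(o)$ but says nothing about possible points of $\{w\le T_r\}$ in $B_{2R}(o)\setminus\overline{B}_R(o)$. The containment in the \emph{whole} domain is precisely what is used later (to get $\Omega^i_{T_{\rho,\xi}}\subset\subset B_\rho(o)$ inside the much larger ball $B^i_{4\rho+4}(o)$). The paper closes this by a maximum-principle argument on the approximators: $w_{p_i}\ge T_r+\tfrac12$ on $\partial B_r(o)$ and $w_{p_i}\to+\infty$ as $r(x)\to 2R$, so a point $z$ outside $B_r(o)$ with $w_{p_i}(z)\le T_r+\tfrac14$ would force an interior minimum of the $p$-harmonic function in $B_{2R}(o)\setminus\overline{B}_r(o)$, a contradiction. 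You need some version of this step.
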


\begin{remark}\label{rem:RisultatiKai}
    If $(M,g)$ in \cref{thm:existenceimcfisoplocalized} supports a $(1,1^*)$-Sobolev inequality, then the existence of a \emph{global} proper weak IMCF issuing from a point follows from a careful modification of the proof of \cite{KaiXu}. We mention that in the very recent \cite{XuPhDThesis}, the existence of a global solution to the weak IMCF together with the quantitative estimate \eqref{eq:imcfproper} has been proven under the sole assumption that the ambient manifold  supports a $(1,1^*)$-Sobolev inequality (see \cite[Theorem B]{XuPhDThesis}).
\end{remark}

\begin{remark}
\cref{thm:existenceimcfisoplocalized} is analogous to the result claimed in \cite[Theorem 1.7]{mari_flowlaplaceapproximationnew_2022}, and it might be argued that \cref{thm:existenceimcfisoplocalized} follows from some adaptation of such a result. However,\cite[Theorem 1.7]{mari_flowlaplaceapproximationnew_2022} relies on the quantitative estimate claimed in \cite[Theorem 3.6]{mari_flowlaplaceapproximationnew_2022}, whose proof seems to contain a gap.
\end{remark}

The proof of \cref{thm:existenceimcfisoplocalized} requires some preliminary steps. The following result is a direct consequence of \cite[Theorem 1.1]{kotschwar_localgradientestimatesharmonic_2009}.

\begin{theorem}\label{thm:boundkotni}
    Let $(M, g)$ be a complete smooth Riemannian manifold, and let $u_p$ be a positive $p$-harmonic function defined in an open set $U \subset M$, for $p \in (1, 2)$. Let $w_p = -(p-1) \log u_p$. Then, for any compact subset $K \subset U$, we have
    \begin{equation}
\label{eq:boundkotni}
        \abs{\nabla w_p} \leq C(K),
    \end{equation}
    where $C(K)$ does not depend on $p \in (1, 2)$.
\end{theorem}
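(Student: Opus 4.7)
The approach is to directly invoke \cite[Theorem 1.1]{kotschwar_localgradientestimatesharmonic_2009}, which is a Cheng--Yau type local gradient estimate for positive $p$-harmonic functions on Riemannian manifolds with a lower Ricci bound. In the form relevant here, that result states that if $u_p$ is positive and $p$-harmonic on $U$, then for every compact $K \subset U$ one has a pointwise bound of the shape
\[
\frac{|\nabla u_p|}{u_p}(x) \le \frac{C'}{p-1}, \qquad \forall\, x \in K,
\]
where $C'$ depends on $n$, on the distance from $K$ to $\partial U$, and on a lower Ricci bound on a neighborhood of $K$, but \emph{not} on $p$ for $p$ varying in a bounded interval away from $1$ and $n$. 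The sharp dependence on $p-1$ of the right-hand side is precisely the innovation of Kotschwar--Ni with respect to earlier Cheng--Yau type estimates for the $p$-Laplacian, and it is exactly what makes their estimate compatible with the Moser limit procedure $p \to 1^+$.

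Once this is in place, the statement follows from a direct algebraic manipulation. Since $w_p = -(p-1)\log u_p$, we have $\nabla w_p = -(p-1)\, \nabla u_p / u_p$, hence
\[
|\nabla w_p|(x) \;=\; (p-1)\,\frac{|\nabla u_p|(x)}{u_p(x)} \;\le\; C',
\]
for every $x \in K$, with $C'$ uniform in $p \in (1,2)$. Setting $C(K) := C'$ yields \eqref{eq:boundkotni}.

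I do not expect any real obstacle: the whole content of \cref{thm:boundkotni} is a restatement in terms of $w_p$ of the Kotschwar--Ni estimate on $u_p$. The only point worth flagging, if one wanted to be more explicit, is to verify that the constant produced by \cite[Theorem 1.1]{kotschwar_localgradientestimatesharmonic_2009} indeed depends only on quantities that are uniform in $p \in (1,2)$ once $K \Subset U$ is fixed; this reduces to inspecting the Bochner-type inequality and the choice of auxiliary test function used in their proof, which naturally produces constants polynomial in $p-1$ and bounded for $p$ in any compact subinterval of $(1,n)$.
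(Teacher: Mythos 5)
Your proposal is exactly the paper's argument: the paper states \cref{thm:boundkotni} as a direct consequence of \cite[Theorem 1.1]{kotschwar_localgradientestimatesharmonic_2009}, whose estimate $|\nabla u_p|/u_p \le C'/(p-1)$ with $C'$ uniform for $p$ near $1$ translates, via $\nabla w_p = -(p-1)\nabla u_p/u_p$, into \eqref{eq:boundkotni}. The only minor inaccuracy is that the Kotschwar--Ni constant depends on a \emph{sectional} curvature bound near $K$ (as the paper notes right after the statement), not merely a lower Ricci bound.
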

The constant $C(K)$ in \cref{thm:boundkotni} depends on sectional curvature bounds for $g$ on $K$.
The following observation will be useful also for estimating the asymptotic behavior of the Hawking mass at $o$ along the IMCF, see \cref{prop:PerimetroEHawkingsProperty} below.

\begin{remark}
For $p\in (1,2)$, if $u_p$ is a positive $p$-harmonic function on $B_{2R}(o)\setminus\{o\}$, and $\mathrm{Sec}\geq -k^2$ on $B_{2R}(o)$, then there exist two constants $\eta:=\eta(k,n)$ and $\zeta:=\zeta(k,n)$ such that for every $x\in B_{2R}(o)\setminus\{o\}$ with $\dist(x,o)<\min\{\eta,R\}$ there holds
    \begin{equation}\label{eqn:SharpGradientKotNi}
    |\nabla w_p|(x) \leq \frac{\zeta}{\dist(o,x)}. 
    \end{equation}
    Indeed, it suffices to apply \cite[Equation (1.5)]{kotschwar_localgradientestimatesharmonic_2009} on balls $B(x,\dist(o,x)/2)$. We remark that both $\eta,\zeta$ can be chosen to be uniform with respect to $p\to 1^+$.
\end{remark}

The following result yields the uniform lower bound on $w_p^{2R}$ that will result in \eqref{eq:imcfproper}.
Its core is in the Harnack inequality for $p$-harmonic functions that comes with the sharp dependence with respect to $p$. The estimate \eqref{eq:stimaluca} below was suggested to the authors by Luca Benatti.

\begin{theorem}\label{thm:EstimatepHarmonic}
Let $(M,g)$ be a complete smooth Riemannian manifold of dimension $n\ge 2$. Fix $o\in M$, and $R>0$, and let $p \in (1,n)$. Let $w_p^{2R} = -(p-1) \log G_p^{2R}$, with $G_p^{2R}$ as in \eqref{eq:def-green}. Let $\rho \in (0,R/2]$ be such that \eqref{eqn:CondizioneConnessione} holds.
    
Then 
\begin{equation}
\label{eq:estimatepharmonic}
        w_p^{2R}(x) \geq (n-p) \log r(x) -  C, \qquad \text{for all $x\in B_{3R/2}(o)\setminus\{o\}$},
    \end{equation}
for $C=C\big(n, C_{1, \mathrm{Sob}}(B_{2R}(o)), C_P(B_{2R}(o)), C_A(B_{2R}(o)), R/\rho, C_{\mathrm{cov}}(B_\rho(o))\big)$, where $r(x)\eqdef \dist(o,x)$.
\end{theorem}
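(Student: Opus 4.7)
The plan is to recast the claimed lower bound on $w_p^{2R} = -(p-1)\log G_p^{2R}$ as the pointwise upper bound $G_p^{2R}(x) \leq e^{C/(p-1)}\, r(x)^{-(n-p)/(p-1)}$, and to prove this upper bound by first controlling $G_p^{2R}$ from above along each sphere $\partial B_s(o)$ from the minimum side, using the capacity identity of \cref{lem:expcap} together with an isoperimetric-type lower bound on the $p$-capacity, and afterwards transferring the resulting bound to the maximum on the same sphere via a chained quantitative Harnack inequality.

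I would first upgrade the $(1,1^{\ast})$-Sobolev inequality on $B_{2R}(o)$ to the $(p,p^{\ast})$-Sobolev inequality with constant $C_{p,\mathrm{Sob}} \leq C(n)\, C_{1,\mathrm{Sob}}(B_{2R}(o))$ by applying the standard Federer--Fleming argument to $|f|^{\gamma}$ with $\gamma = (n-1)p/(n-p)$, noting that $\gamma$ stays bounded as $p\to 1^{+}$. Testing the resulting Sobolev inequality on admissible competitors for the $p$-capacity then gives $\mathrm{Cap}_p(K, B_{2R}(o)) \geq C_{p,\mathrm{Sob}}^{-1}\, |K|^{(n-p)/n}$ for every compact $K \Subset B_{2R}(o)$. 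I apply this to $K = \{G_p^{2R} \geq m(s)\}$, where $m(s) \eqdef \min_{\partial B_s(o)} G_p^{2R}$: the $p$-harmonicity of $G_p^{2R}$ on $\overline{B_s(o)}\setminus\{o\}$, together with the blow-up $G_p^{2R}\to+\infty$ at $o$ and the minimum principle, forces $K \supset B_s(o)$, so $|K| \geq C_A(B_{2R}(o))^{-1} s^n$. Combining with the capacity identity $\mathrm{Cap}_p(\{G_p^{2R}\geq t\}, B_{2R}(o)) = t^{-(p-1)}\bigl(\tfrac{n-p}{p-1}\bigr)^{p-1}|\mathbb{S}^{n-1}|$ (\cref{lem:expcap} applied on the ball $B_{2R}(o)$) yields
$$
m(s)^{p-1} \;\leq\; c \, s^{-(n-p)}, \qquad 0 < s \leq \rho,
$$
with $\log c$ bounded uniformly as $p \to 1^{+}$ in terms of $n$, $C_{1,\mathrm{Sob}}$ and $C_A$; observe here the crucial uniform boundedness of $\bigl(\tfrac{n-p}{p-1}\bigr)^{p-1}$ as $p \to 1^{+}$.

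To transfer this bound from $m(s)$ to an arbitrary value of $G_p^{2R}$ on $\partial B_s(o)$, I would invoke a quantitative Harnack inequality for nonnegative $p$-harmonic functions along the lines of \cite{salvatori-rigoli-vignati}, providing a Harnack constant $H$ with the \emph{sharp scaling} $(p-1)\log H \leq C(n, C_A, C_P)$. This scaling is the key technical input: the standard Harnack constants for the $p$-Laplacian blow up as $p\to 1^{+}$, and only the form $H = e^{O(1/(p-1))}$ is compatible with the scheme. Using the connectedness hypothesis \eqref{eqn:CondizioneConnessione} and covering $\partial B_s(o)$ by at most $C_{\mathrm{cov}}(B_\rho(o))$ balls centered in $A_{3s/4, 5s/4}(o)$ on which Harnack is available, chaining produces $\max_{\partial B_s(o)} G_p^{2R} \leq H^{C_{\mathrm{cov}}}\, m(s)$; together with the previous step and after taking $-(p-1)\log$, this yields \eqref{eq:estimatepharmonic} on $\{0 < r(x) \leq \rho\}$.

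For $\rho < r(x) \leq 3R/2$, a further finite chain of Harnack inequalities across a fixed covering of the compact region $\overline{B}_{3R/2}(o)\setminus B_\rho(o)$ by balls inside $B_{2R}(o)\setminus\{o\}$, whose number depends only on $R/\rho$ and on the geometric data, gives $G_p^{2R}(x) \leq H^{N'} m(\rho)$ with the same scaling of $H$; the additive loss $(n-p)\log(3R/(2\rho))$ incurred when passing from $(n-p)\log\rho$ to $(n-p)\log r(x)$ is absorbed into $C$ via its dependence on $R/\rho$. The main obstacle of the whole scheme is precisely the scaling of the Harnack constant: all other ingredients reduce to standard elliptic theory, the Sobolev inequality on $B_{2R}(o)$, and the capacity formula of \cref{lem:expcap}.
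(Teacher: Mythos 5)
Your proposal is correct and follows essentially the same route as the paper's proof: the capacity identity of \cref{lem:expcap} combined with the minimum-principle inclusion $B_s(o)\subset\{G_p^{2R}\ge \min_{\partial B_s(o)}G_p^{2R}\}$ and Ahlfors regularity gives the bound on the extremal value over $\partial B_s(o)$, and the sharply scaled Harnack inequality of \cite{salvatori-rigoli-vignati} (constant of the form $e^{O(1/(p-1))}$, which you rightly single out as the crucial input) is chained via \eqref{eqn:CondizioneConnessione} and $C_{\mathrm{cov}}$ for $r\le\rho$ and via an $R/\rho$-controlled chain (the paper uses minimizing geodesics through $\partial B_\rho(o)$) for larger radii. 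The only cosmetic difference is that you derive the isocapacitary inequality $\mathrm{Cap}_p\gtrsim |K|^{(n-p)/n}$ from a $(p,p^\ast)$-Sobolev inequality obtained by Federer--Fleming, whereas the paper cites Grigor'yan's integral formula; both give the same constant behavior as $p\to1^+$.
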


\begin{proof}
Within this proof let $G_p^{2R} =: G_p$, and $w_p^{2R} =: w_p$ for the ease of notation. Let $m(r)=\max_{\partial B_r(o)} w_p$ for $r \in (0, 3R/2)$. Notice that $w_p(x)\to -\infty$ as $x\to o$. Then, by the maximum principle, $B_r (o) \subset \{w_p \leq m(r)\}$, the monotonicity of the $p$-capacity with respect to inclusion and \eqref{eq:expcap} give
\begin{equation*}
    \label{eq:stimaluca1}
    \mathrm{Cap}_p (\overline{B}_r(o), B_{2R}(o)) \leq \mathrm{Cap}_p (\{w_p \leq m(r)\}, B_{2R}(o)) = \left(\frac{n-p}{p-1}\right)^{p-1}\abs{\mathbb{S}^{n-1}} e^{m(r)}, 
\end{equation*}
that results in
\begin{equation}\label{eq:stimaluca}
    m(r) \geq \log (\mathrm{Cap}_p (\overline{B}_r(o), B_{2R}(o))) - (p-1) \log \left(\frac{n-p}{p-1}\right) - \log \abs{\mathbb{S}^{n-1}}.
\end{equation}
It is now well-known that capacities can be estimated exploiting isoperimetric inequalities. More precisely, setting $C =  1/ C_{1, {\rm Sob}}(B_{2R}(o))$, we can apply \cite[Eq. (1.7) p. 141]{grigoryan-isocap} to get
\begin{equation*}
\begin{split}
\mathrm{Cap}_p (\overline{B}_r(o), B_{2R}(o)) &\geq \left(\int_{\abs{B_r(o)}}^{\abs{B_{2R}(o)}}\frac{1}{C v^{\frac{p(n-1)}{n(p-1)}}}  \de v \right)^{1-p}  \\
&= \left(C \frac{(n-p)}{n(p-1)} \right)^{p-1} \abs{B_{r}(o)}^{\frac{n-p}{n}} \left[1 - \left(\frac{\abs{B_r(o)}}{\abs{B_{2R}(o)}} \right)^{\frac{n-p}{n(p-1)}}\right]^{1-p} \\
&\geq \left(C \frac{(n-p)}{n(p-1)} \right)^{p-1} \abs{B_{r}(o)}^{\frac{n-p}{n}}.
\end{split}
\end{equation*}
We can now estimate $\abs{B_{r}(o)}^{\frac{n-p}{n}} \geq C_A(B_{2R}(o))^{\frac{n-p}{n}} r^{n-p}$, and so we get from \eqref{eq:stimaluca} that 
\begin{equation}\label{eq:stimaluca2}
m(r) \geq (n-p) \log r - C
\end{equation}
where now $C=C\big(n, C_{1, \mathrm{Sob}}(B_{2R}(o)), C_A(B_{2R}(o)) \big)$, and it is independent of $p$ as $p\to 1^+$. 

Let us consider now $y\in \partial B_r(o)$ for $r \in (0,\tfrac32 R)$. We aim at estimating $w_p(y)$ combining \eqref{eq:stimaluca2} with a Harnack inequality.
The Harnack inequality for positive $p$-harmonic functions \cite[Theorem 1.28]{salvatori-rigoli-vignati} applied to $G_p$ reads
\begin{equation}\label{eqn:LaBenedettaHarnack}
G_p(y) \leq C_H^{\frac{1}{p-1}}(B_s(z)) G_p(x),
\end{equation}
for any $x, y \in B_{4s/5}(z)$ such that $B_s(z)\subset\subset B_{2R}(o)\setminus\{o\}$, and the Harnack constant $C_H(B_s(z))$ can be estimated from above in terms of $n, C_P(B_{2R}(o)), C_A(B_{2R}(o))$ and $C_{p, \mathrm{Sob}}(B_{2R}(o))$. The interested reader might consult \cite[Theorem 3.4 and Remark 3.5]{mari_flowlaplaceapproximationnew_2022} for the full computations leading to the explicit constant in the Harnack inequality. 
Since $C_{p,\mathrm{Sob}}(B_{2R}(o))\to C_{1,\mathrm{Sob}}(B_{2R}(o))$ as $p\to 1^+$, then \[
C_H(B_s(z)) \le C=C\big(n, C_P(B_{2R}(o)), C_A(B_{2R}(o)), C_{1, \mathrm{Sob}}(B_{2R}(o)) \big)
\]
for any $B_s(z)\subset\subset B_{2R}(o)\setminus\{o\}$.

Fix $x, y \in \partial B_r(o)$ such that
\[
G_p(x) = \min_{\partial B_r(o)} G_p,
\qquad
G_p(y) = \max_{\partial B_r(o)} G_p.
\]
We first consider the case $r \le\rho$, where $\rho$ is as in the assumptions. Hence by \eqref{eqn:CondizioneConnessione} we can connect $x$ and $y$ with a curve $\gamma\subset A_{3r/4,5r/4}(o)$. Letting $N\eqdef C_{\rm cov}(B_\rho(o))$, we can fix a family of at most $N$ balls $\{ B_{r/2}(z_j)\}$ such that $z_j \in A_{3r/4,5r/4}(o)$ and $A_{3r/4,5r/4}(o) \subset \cup_j  B_{r/2}(z_j)$. The existence of $\gamma$ implies that we can find a sequence of points $y:=x_1,\ldots,x_{N'}=:x$ such that $N' \le N+1$, $x_i,x_{i+1}\in \overline{B}_{r/2}(z_{j_i})$ for every $i=1,\ldots,N'-1$ for some $z_{j_i}$. Thus applying iteratively \eqref{eqn:LaBenedettaHarnack} we deduce (the value of the constant $C$ might change from line to line)
\begin{equation}\label{eq:zzPostHarnackScaleBasse}
    \max_{\partial B_r(o)} G_p \le C^{\frac{N'}{p-1}} \min_{\partial B_r(o)} G_p \le C^{\frac{1}{p-1}} \min_{\partial B_r(o)} G_p,
\end{equation}
for $C=C\big(n, C_P(B_{2R}(o)), C_A(B_{2R}(o)), C_{1, \mathrm{Sob}}(B_{2R}(o)), C_{\rm cov}(B_\rho(o)) \big)$.\\
If instead $r \in (\rho, \tfrac32 R)$, then we consider $p_1, p_2 \in \partial B_\rho(o)$ such that $p_1$ (resp. $p_2$) belongs to the intersection of $\partial B_\rho(o)$ with a minimizing geodesic from $y$ to $o$ (resp. from $o$ to $x$). By \eqref{eq:zzPostHarnackScaleBasse} we already know that
\[
G_p(p_1) \le C^{\frac{1}{p-1}} G_p(p_2).
\]
Applying \eqref{eqn:LaBenedettaHarnack} with $s=\rho/4$ iteratively along the geodesic from $y$ to $o$ we find
\[
G_p(y) \le  C^{\frac{N''}{p-1}}  G_p (p_1),
\]
with $N'' \in \N$ such that $N''\le \tfrac32 R / (\tfrac{\rho}{8}) +1$. Arguing analogously along the geodesic from $o$ to $x$, we finally get that
\begin{equation}\label{eq:zzPostHarnack}
    \max_{\partial B_r(o)} G_p \le C^{\frac{1}{p-1}} \min_{\partial B_r(o)} G_p,
\end{equation}
for $C=C\big( n, C_P(B_{2R}(o)), C_A(B_{2R}(o)), C_{1, \mathrm{Sob}}(B_{2R}(o)), C_{\rm cov}(B_\rho(o)) , R/\rho\big)$.
Rewriting \eqref{eq:zzPostHarnack} in terms of $w_p$ and combining with \eqref{eq:stimaluca2} yields
\[
\min_{\partial B_r(o)} w_p = w_p(y) \ge w_p(x) - C = \max_{\partial B_r(o)} w_p - C
\ge (n-p)\log r - C,
\]
for $C=C\big(n, C_{1, \mathrm{Sob}}(B_{2R}(o)), C_P(B_{2R}(o)), C_A(B_{2R}(o)), R/\rho, C_{\mathrm{cov}}(B_\rho(o))\big)$.
\end{proof}

In order to pass to the limit $w_p = -(p-1) \log G_p$ as $p \to 1^+$ on compact sets $K \subset B_{2R}(o)\setminus\{o\}$, we need also some uniform upper bound on $w_p$. This is a well-known consequence of the Laplacian comparison theorem {and of the asymptotics of the Green function at the pole in \eqref{eq:asygreen}}. A proof of the following result can be found in \cite[Theorem 2]{Kura}.
\begin{proposition}\label{prop:BoundbelowG}
    Let $(M, g)$ be an $n$-dimensional smooth complete Riemannian manifold, and let $p\in (1,n)$. Let $G_p$ be the solution of \eqref{eq:def-green} with $\Omega$ in place of $B_R(o)$. Let $r(x):=\dist(o,x)$. Fix $R>0$, and assume $B_R(o)\subset\Omega$. Suppose that $\mathrm{Ric} \geq - (n-1)a$ holds on $B_R(o)$, for some $a > 0$. 
    
    Then
    \begin{equation}
\label{eq:altrocontrollo}
        G_p(x) \geq \int_{r(x)}^R (v_a(t))^{-\frac{1}{p-1}} \de t, 
    \end{equation}
    for any $x \in B_R(o)\setminus\{o\}$, where $v_a(t):= c_{n,p}\left(\left(\sqrt{a}\right)^{-1}\sinh(\sqrt{a}t)\right)^{n-1}$, for suitable $c_{n,p}>0$.   
    Moreover $c_{n,p} \to c_n>0$ as $p\to1^+$.
\end{proposition}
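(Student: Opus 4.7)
The strategy is to construct an explicit radial barrier, verify that it is a weak $p$-subsolution of the $p$-Laplace equation on the punctured ball via the Laplacian comparison theorem, and then invoke the comparison principle for the $p$-Laplacian to bound it above by $G_p$. Define
\[
\varphi(x) \eqdef h(r(x)), \qquad h(r) \eqdef \int_r^R v_a(t)^{-1/(p-1)} \de t,
\]
with $v_a$ as in the statement and $c_{n,p}>0$ to be chosen. The plan is to show $\varphi \le G_p$ on $B_R(o)\setminus\{o\}$.

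Step 1 is a direct computation. Away from the cut locus of $o$, for a radial function $f(r(x))$ with $f'<0$ one has
\[
\Delta_p(f\circ r) = \bigl(|f'|^{p-2}f'\bigr)'(r) + |f'(r)|^{p-2}f'(r)\,\Delta r.
\]
With $f=h$, the choice $|h'|^{p-2}h' = -v_a^{-1}$ gives $\Delta_p \varphi = v_a^{-1}\bigl(v_a'/v_a - \Delta r\bigr)$. Since $v_a(t) = c_{n,p}(\sinh(\sqrt{a}t)/\sqrt{a})^{n-1}$, one computes $v_a'/v_a = (n-1)\sqrt{a}\coth(\sqrt{a}r)$, which is exactly the upper bound on $\Delta r$ provided by the Laplacian comparison theorem under $\Ric \ge -(n-1)a$, and this comparison holds distributionally on all of $B_R(o)\setminus\{o\}$ (so that the cut locus causes no difficulty). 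Thus $\Delta_p\varphi \ge 0$ in the weak sense on $B_R(o)\setminus\{o\}$, independently of the choice of $c_{n,p}$.

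Step 2 is to choose $c_{n,p}$ by asymptotic matching at the pole. Since $v_a(t)\sim c_{n,p}\,t^{n-1}$ as $t\to 0^+$, integration yields
\[
h(r) \sim \frac{c_{n,p}^{-1/(p-1)}(p-1)}{n-p}\, r^{-(n-p)/(p-1)}\quad\text{as }r\to 0^+.
\]
Setting $c_{n,p}\eqdef \bigl(\tfrac{p-1}{n-p}\bigr)^{p-1}$ makes this asymptotic equal $r^{-(n-p)/(p-1)}$, which matches the asymptotic of $G_p$ at $o$ given by \eqref{eq:asygreen}. Since $(p-1)^{p-1}\to 1$ and $(n-p)^{p-1}\to 1$ as $p\to 1^+$, one has $c_{n,p}\to c_n = 1>0$, as required.

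Step 3 is the comparison. Fix $\delta\in(0,1)$. By the matched asymptotics, there exists $\eps_\delta>0$ such that $(1-\delta)\varphi \le G_p$ on $\partial B_{\eps_\delta}(o)$; moreover $\varphi = G_p = 0$ on $\partial B_R(o)$. On the annulus $B_R(o)\setminus \overline{B_{\eps_\delta}(o)}$, the function $(1-\delta)\varphi$ is still a weak $p$-subsolution (because $\Delta_p((1-\delta)\varphi) = (1-\delta)^{p-1}\Delta_p\varphi\ge 0$), while $G_p$ is $p$-harmonic. The comparison principle for the $p$-Laplacian (see e.g.\ \cite{heinonen-kilpelainen-martio}) then yields $(1-\delta)\varphi \le G_p$ on the whole annulus. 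Sending $\eps_\delta\to 0$ and then $\delta\to 0$ gives the desired bound $\varphi\le G_p$ on $B_R(o)\setminus\{o\}$.

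The main technical point to be careful about is that the Laplacian comparison $\Delta r\le (n-1)\sqrt{a}\coth(\sqrt{a}r)$ is used in the distributional sense on the entire punctured ball, including across the cut locus; this is standard but must be invoked explicitly so that the computation of $\Delta_p\varphi$ in Step 1 produces a genuine weak $p$-subsolution rather than just a pointwise inequality off the cut locus. Everything else is a routine application of the comparison principle.
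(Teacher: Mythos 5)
Your argument is correct and is exactly the proof the paper alludes to (the paper does not prove the proposition itself but cites \cite[Theorem 1.2]{Kura}, describing it as a consequence of the Laplacian comparison theorem and of the asymptotics \eqref{eq:asygreen} at the pole — precisely your Steps 1--3, including the careful distributional use of $\Delta r \le (n-1)\sqrt{a}\coth(\sqrt{a}r)$ across the cut locus). One small correction: since $G_p$ solves \eqref{eq:def-green} on $\Omega$ with $B_R(o)\subset\Omega$, it need not vanish on $\partial B_R(o)$; you only have $\varphi = 0 \le G_p$ there, which is all the comparison principle requires, so the proof is unaffected.
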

\begin{remark}
    In \cref{prop:BoundbelowG}, the function 
    \[
    \int_{r(x)}^R (v_a(t))^{-\frac{1}{p-1}} \de t
\]
is the solution of \eqref{eq:def-green} in the space form of constant curvature $a$.
\end{remark}

\begin{proof}[Proof of \cref{thm:existenceimcfisoplocalized}.]
Within this proof let $G_p^{2R} =: G_p$, and $w_p^{2R} =: w_p$ for the ease of notation. Let $K \subset B_{2R}(o) \setminus \{o\}$ be compact. We first show that $w_p$ is equibounded on $K$ as $p \to 1^+$. Indeed by \eqref{eq:estimatepharmonic} we know that $w_p\ge (n-p) \log(r(x)) -C$ on $B_{3R/2}(o)\setminus\{o\}$ for $C$ as in \cref{thm:EstimatepHarmonic}. On the other hand, taking $\overline{r}\in(0,2R)$ such that $K \subset B_{\overline{r}}(o)\setminus\{o\}\subset\subset B_{2R}(o)$, if $\mathrm{Ric} \geq -(n-1)a$ on $B_{2R}(o)$ for some $a >0$, by \eqref{eq:altrocontrollo} we have that for all $x\in B_{\overline{r}}(o)\setminus\{o\}$ there holds
\begin{equation}\label{eq:upperboundw}
w_p(x) \leq - \log\left(\int_{r(x)}^{\overline{r}} (v_a(t))^{-\frac{1}{p-1}} \de t\right)^{(p-1)} \leq - \log\left(\abs{r(x)- \overline{r}}^{p-2} \int_{r(x)}^{\overline{r}} (v_a(t))^{-1}\de t\right).
\end{equation}
Hence the above upper bound is uniform with respect to $p \to 1^+$ for any $x \in K$. Thus $w_p$ is bounded on $K \cap B_{3R/2}(o)$ uniformly with respect to $p\to1^+$. Therefore, applying the gradient bound \eqref{eq:boundkotni} on $K$, we deduce that $w_p$ is bounded on $K$ uniformly with respect to $p\to1$. Exhausting $B_{2R}(o)\setminus\{o\}$ with a sequence of increasing compact sets, by Ascoli--Arzel\`{a} and by a diagonal argument, we get that $w_p$ converges to some function $w$ locally uniformly on $B_{2R}(o)\setminus\{o\}$, up to passing to a subsequence with respect to $p$.

We claim that $w$ satisfies the weak formulation of the IMCF on $B_{2R}(o)\setminus\{o\}$. Indeed, arguing as in \cite[Equation (9)]{MoserIMCF}, one gets that $|\nabla w_p|^p\vol$ converges to $|\nabla w|\vol$ in duality with bounded continuous functions on compact subsets contained in $B_{2R}(o)\setminus \{o\}$, along the suitable sequence $p_i\to 1^+$ for which $w_{p_i}$ converges. Using again \cite[Equation (9)]{MoserIMCF} this is enough to conclude that $w$ is a weak solution of the IMCF on $B_{2R}(o)\setminus\{o\}$. 

The lower bound \eqref{eq:estimatepharmonic} passes to the limit and gives \eqref{eq:imcfproper}. The upper bound \eqref{eq:upperboundw} is preserved on compact subsets of $B_{3R/2}(o) \setminus \{o\}$ for the limit $w$ as well, hence it shows that $w \to -\infty$ as $x  \to o$.

Therefore we proved items (1), (2) and (3) of the statement. 
It remains to show that $\{w \le (n-1)\log r - C-1\} \subset B_r(o)$ for every $r\leq R$. Fix $r\in (0,R]$. By \cref{thm:EstimatepHarmonic} there exists 
\[
C=C\big(n, C_{1, \mathrm{Sob}}(B_{2R}(o)), C_P(B_{2R}(o)), C_A(B_{2R}(o)), R/\rho, C_{\mathrm{cov}}(B_\rho(o))\big),
\] 
such that $w_p \ge (n-p) \log r - C$ on $\partial B_r(o)$. Denoting $T_r \eqdef (n-1) \log r - C -1$, it follows that $w_p \ge T_r + \tfrac12$ on $\partial B_r(o)$ for any $p$ sufficiently close to $1$. By \eqref{eq:imcfproper}, it follows that $\{ w \le T_r\} \cap  \partial B_r(o) = \emptyset$. Suppose by contradiction that there exists $z \in B_{2R}(o) \setminus B_r(o)$ such that $w(z) \le T_r$. Then $w_{p_i}(z) \le T_r+ \tfrac14$ for a sequence $p_i\to 1^+$ such that $w_{p_i}$ converges to $w$, and for $i$ large enough.
Since $w_{p_i} \ge T_r + \tfrac12$ on $\partial B_r(o)$ and $w_{p_i}(x)\to+\infty$ as $r(x)\to 2R$, then $w_{p_i}$ would have an interior minimum on $B_{2R}(o) \setminus \overline{B}_r(o)$, which is a contradiction to the maximum principle for $p$-harmonic functions. 
\end{proof}

\subsection{Properties of the weak IMCF}\label{rem:PropertiesOfFlow}

Let us collect in this section few known properties on the weak IMCF constructed in \cref{thm:existenceimcfisoplocalized}.

Let $(M,g)$ be a smooth complete Riemaniann manifold. Let $o\in M, R>0$ and let $w$ be given by \cref{thm:existenceimcfisoplocalized}. Let $T\in\R$ be such that $\{w \le T\} \subset B_R(o)$ (such a $T$ exists by \cref{thm:existenceimcfisoplocalized}).
For every $t\in (-\infty,T]$, set $E_t:=\{w<t\}$, and $E_t^+ = \{w \le t\}$.
Then the following holds.
    \begin{enumerate}
        \item If $n\le 7$, then for every $t\in (-\infty,T]$ both the set $E_t$ and the set $E_t^+$ have $C^{1,\alpha}$ boundary, for some $\alpha>0$. Indeed, as a direct consequence of \cref{def:WeakIMCFLevelSet}, the previous sets are local $(\Lambda,r_0)$-minimizers in $B_R(o)\setminus\{o\}$, see, e.g., \cite[Example 21.2]{MaggiBook}; thus by (the Riemannian analogue of) \cite[Theorem 21.8]{MaggiBook} one gets the sought claim. In fact, one can obtain $C^{1, 1}$ regularity of the sublevel sets \cite{Heidusch}.\\
        It follows that $\partial E_t = \partial \{ w \le t\}=\{w=t\}$ for almost every $t \le T$, and that $\partial E_s \to \partial E_t$ in $C^1$ as $s\nearrow t$, for every $t \le T$, cf. \cite[Eq. (1.10)]{HuiskenIlmanen}. \\
        Moreover, it is meaningful to speak about the weak mean curvature $H$ on $\partial E_t$, see, e.g., \cite[page 16]{HuiskenIlmanen}. Moreover, $H=|\nabla w|>0$ $\mathcal{H}^{n-1}$-almost everywhere on $\partial E_t$ for almost every $t\in (-\infty,T]$, see \cite[Equation (1.12)]{HuiskenIlmanen}, and \cite[Lemma 5.1]{HuiskenIlmanen}.
        
        \item For almost every $t\in (-\infty,T]$, the boundary $\partial E_t$ has weak second fundamental form $A$ (see \cite[Definition 1.3]{GerochMonotonicityMoser}, or \cite[pages 401--405]{HuiskenIlmanen}), and 
        \[
        \int_{\partial E_t} |A|^2<+\infty.
        \]
        The last inequality is a consequence of \cite[Theorem 1.1(vi)]{GerochMonotonicityMoser}.
        \end{enumerate}
The following Proposition gathers some key properties of the weak IMCF in dimension $3$ constructed through the procedure of the previous section; most notably the Geroch monotonicity formula of Huisken--Ilmanen through such flow.

\begin{proposition}\label{prop:PerimetroEHawkingsProperty}
Let $(M,g)$ be a smooth complete Riemaniann manifold of dimension $n=3$. Let $o\in M, R>0$ and let $w$ be given by \cref{thm:existenceimcfisoplocalized}. Denote $E_t\eqdef \{w <t\}$. Let $T$ be such that $\{w \le T\} \subset B_R(o)$. Let $H$ be the weak mean curvature of the boundary $\partial E_t$. Then the following holds.
            \begin{itemize}
                \item There exist $\bar t\in\mathbb R$, $C_1>0$ such that
                \begin{equation}\label{eqn:AsymptoticsPerimeterAndH}
        {P(E_t)=4\pi e^t}\, \quad \text{for all $t\in (-\infty,T]$}, \quad \text{and} \quad \int_{\partial E_t} H^2 \leq C_1, \quad \text{for almost every $t\in (-\infty,\bar t)$}.
        \end{equation}
        \item For $t\in (-\infty,T]$,  define the \emph{Hawking mass}
\begin{equation}\label{eqn:HawkingMass}
\mathfrak{m}_H(\partial E_t):=\frac{P(E_t)^{1/2}}{(16\pi)^{3/2}}\left(4\pi-\int_{\partial E_t}\frac{H^2}{4}\right).
    \end{equation}
    Then $\mathfrak{m}_H(\partial E_t)\to 0$ for almost every $t\to -\infty$. Moreover, {for a.e. $-\infty<r<s\leq T$, if $\partial E_t$ is connected for a.e. $t \in [r,s]$, then
    \begin{equation}\label{eqn:GerochMonotonicity}
        \begin{split}\mathfrak{m}_H(\partial E_s) &\geq \mathfrak{m}_H(\partial E_r)+ 
         \frac{1}{(16\pi)^{3/2}}\int_r^s P(E_t)^{1/2}\int_{\partial E_t}R_g\de t
        \end{split}.
    \end{equation}
    }
 \end{itemize}
 \end{proposition}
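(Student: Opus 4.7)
The plan is to prove the four claims in turn, using in each case the construction of $w$ as a locally uniform limit of $w_p=-(p-1)\log G_p^{2R}$ combined with the quantitative bounds of \cref{thm:existenceimcfisoplocalized}. For the perimeter identity, note that since $w$ is a weak IMCF on $B_{2R}(o)\setminus\{o\}$, the standard exponential growth $P(\Omega_t)=e^{t-s}P(\Omega_s)$ for $s\le t\le T$ holds, so only the normalization $\lim_{t\to-\infty}e^{-t}P(\Omega_t)=4\pi$ needs to be identified. I would obtain this by passing to $p\to 1^+$ in \eqref{eq:expcap}: the prefactor $((n-p)/(p-1))^{p-1}$ tends to $1$, $|\mathbb{S}^{n-1}|=4\pi$ in dimension $n=3$, and the $p$-capacity of a sublevel set converges to its perimeter via the coarea formula. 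Alternatively, \eqref{eq:imcfproper} and \eqref{eq:upperboundw} force $\Omega_t\to\{o\}$ as $t\to-\infty$, and by \cref{lem:LocalePiattezzaC0metrics} the rescaled level sets converge to a round $2$-sphere of perimeter $4\pi$ in $\mathbb R^3$.

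For the bound on $\int H^2$ and the consequent decay of $\mathfrak m_H$, recall that for almost every $t$, $\partial\Omega_t$ is a $C^{1,\alpha}$-surface on which the weak mean curvature satisfies $H=|\nabla w|$ $\mathcal{H}^{2}$-almost everywhere. The sharp gradient bound \eqref{eqn:SharpGradientKotNi} gives $|\nabla w_p|(x)\le \zeta/r(x)$ uniformly as $p\to 1^+$ on $B_\eta(o)\setminus\{o\}$, and this estimate survives the locally uniform limit, yielding $|\nabla w|(x)\le\zeta/r(x)$ almost everywhere near $o$. Combining with the lower bound $w(x)\ge 2\log r(x)-C$ from \eqref{eq:imcfproper} we obtain $r(x)\ge e^{(t+C)/2}$ for every $x\in\partial\Omega_t$, and hence $H^2\le\zeta^2 e^{-(t+C)}$ pointwise on $\partial\Omega_t$. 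Multiplying by $P(\Omega_t)=4\pi e^t$ gives $\int_{\partial\Omega_t}H^2\le 4\pi\zeta^2 e^{-C}=:C_1$ for every $t$ below a threshold $\bar t$ chosen, via item (4) of \cref{thm:existenceimcfisoplocalized}, so that $\{w\le\bar t\}\subset B_\eta(o)$. Plugging $P(\Omega_t)=4\pi e^t$ and $\int H^2\le C_1$ into the definition of $\mathfrak m_H$ then produces $|\mathfrak m_H(\partial\Omega_t)|\le C\,e^{t/2}\to 0$ as $t\to-\infty$.

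For the Geroch monotonicity \eqref{eqn:GerochMonotonicity} I would follow the Huisken--Ilmanen strategy of \cite[Section 5]{HuiskenIlmanen}. For almost every $t\in[r,s]$ the level set $\partial\Omega_t$ is $C^{1,\alpha}$ with weak second fundamental form in $L^2$ and positive weak mean curvature $H=|\nabla w|$, which is exactly the regularity required to run Geroch's chain-rule computation of $\frac{d}{dt}\mathfrak m_H$ in a weak sense. The nonnegative terms in that computation — the gradient term in $|\nabla H|^2/H^2$ and the umbilic deficit $(\lambda_1-\lambda_2)^2$ — are discarded, and by Gauss--Bonnet one has $\int_{\partial\Omega_t}K=2\pi\chi(\partial\Omega_t)\le 4\pi$ precisely under the connectedness assumption (any connected closed surface has Euler characteristic at most $2$), which controls the topological term. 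Integrating the resulting differential inequality over $[r,s]$ then yields \eqref{eqn:GerochMonotonicity}. The main technical obstacle is the rigorous justification of Geroch's chain rule in this weak IMCF setting: one has to work with the $C^{1,\alpha}$ level sets endowed with their $L^2$ second fundamental form, identify the evolution identities via coarea, and control the endpoint values of $\mathfrak m_H$ along a full-measure set of levels. All this is precisely what is carried out in \cite[Section 5]{HuiskenIlmanen}, and it transfers to our punctured-ball setup thanks to the regularity and asymptotic properties of $w$ provided by \cref{thm:existenceimcfisoplocalized}.
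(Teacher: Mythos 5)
Your overall route is the same as the paper's (capacity identity for the perimeter, the Kotschwar--Ni gradient bound for $\int H^2$, Huisken--Ilmanen for Geroch), but there is a concrete sign error in the step that produces the bound on $\int_{\partial\Omega_t}H^2$. From the \emph{lower} bound $w(x)\ge 2\log r(x)-C$ of \eqref{eq:imcfproper} and $w(x)=t$ on the level set you can only conclude $2\log r(x)\le t+C$, i.e. $r(x)\le e^{(t+C)/2}$ --- an \emph{upper} bound on $r(x)$, which says $\partial\Omega_t$ is contained in a small ball but gives no control on $H\le \zeta/r(x)$. To bound $H$ from above you need $r(x)$ bounded \emph{below} on $\partial\Omega_t$, and for that you must use the \emph{upper} bound on $w$ coming from the Laplacian comparison for the Green function, i.e. \eqref{eq:upperboundw} (applied with $\overline r=2r(x)$ and passed to the limit $p\to1^+$), which yields $w(x)\le 2\log r(x)+\vartheta$ near $o$ and hence $r(x)\ge e^{(t-\vartheta)/2}$ on $\partial\Omega_t$. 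This is exactly how the paper proceeds; with that substitution the rest of your computation ($H^2\lesssim e^{-t}$ times $P(\Omega_t)=4\pi e^t$) goes through and the decay of the Hawking mass follows.

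Two smaller points. First, in the perimeter identity the passage from $\lim_{p\to1}\mathrm{Cap}_p(E_t,B_{2R}(o))$ to $P(\Omega_t)$ is not "via the coarea formula": for a general set the $p$-capacities converge to the perimeter of the strictly outward minimizing hull, so you must invoke the minimizing hull property of the weak IMCF level sets (as in \cite{FogagnoloMazzieriHulls}, localized to the ball), and you also need to relate the sublevel sets of $w$ to those of $w_p$ (the paper does this by showing $E^{p_i}_{t-\eps}\subset E_t\subset E^{p_i}_{t+\eps}$ from locally uniform convergence). Second, for the Geroch monotonicity your deferral to \cite[Section 5]{HuiskenIlmanen} matches the paper, which likewise does not reprove it and points to forthcoming work for the adaptation to IMCFs obtained as $p\to1^+$ limits.
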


\begin{proof}
Denote $E_t^+\eqdef \{ w\le t\}$ and let $E^p_t\eqdef \{ w^{2R}_p \le t\}$ for $w^{2R}_p$ as in \cref{thm:existenceimcfisoplocalized}. We shall omit the superscript $2R$ in the sequel, as $R$ is fixed. We know from \cref{thm:existenceimcfisoplocalized} that $w_p$ converges to $w$ locally uniformly on $B_{2R}(o)\setminus\{o\}$ along a sequence $p_i\to1$.

We claim that for any $\sigma \in (-\infty,T), \eps>0$ there exists $\tilde p>1$ such that $E^{p_i}_{t-\eps} \subset E_t^+ \subset E^{p_i}_{t+\eps}$ for any $p_i \in (1,\tilde p)$ and $t \in [\sigma, T]$.\\
We prove the containment $E^{p_i}_{t-\eps} \subset E_t^+$ first. Assume by contradiction that there exist $\sigma \in (-\infty, T)$, $\eps>0$ such that, up to subsequence, there exist $t_i \in [\sigma, T]$ such that $E^{p_i}_{t_i-\eps} \not\subset E_{t_i}^+$ for any $i$. Then there exist points $x_i \in B_{2R}(o) \setminus\{o\}$ such that $w_{p_i}(x_i) \le t_i-\eps$ but $w(x_i)>t_i$ for any $i$. Hence $\liminf_i \dist(x_i,o)>0$, for otherwise $-\infty = \liminf_i w(x_i) \ge \sigma$ by \cref{thm:existenceimcfisoplocalized}. Moreover, there exists $\eta>0$ such that $w_{p_i} > T + \eta$ on $\partial B_R(o)$ for large $i$, for otherwise $\{w \le T\} \cap \partial B_R(o)$ would be nonempty. Hence $x_i \in B_R(o)$ for large $i$, for if $x_i \in B_{2R}(o) \setminus \overline{B}_R(o)$, since $w_{p_i}(x_i) \le T-\eps$ and $w_{p_i}(x)\to+\infty$ as $r(x)\to2R$, then $w_{p_i}$ would have an interior minimum on $B_{2R}(o) \setminus\overline{B}_R(o)$, contradicting the maximum principle for $p$-harmonic functions. Hence, up to subsequence, $t_i\to \tau \in [\sigma, T]$ and $x_i\to x \in \overline{B}_R(o) \setminus\{o\}$. But this contradicts the local uniform convergence, as this implies $\tau-\eps \ge \lim_i w_{p_i}(x_i) = w(x)= \lim_i w(x_i) \ge \tau$.\\
The containment $E_t^+ \subset E^{p_i}_{t+\eps}$ follows by an analogous contradiction argument. This time the contradicting sequence of points $x_i \in B_{2R}(o)\setminus\{o\}$ satisfies $w_{p_i}(x_i)>t_i+\eps$ and $w(x_i) \le t_i$, for $t_i \in [\sigma, T]$. Hence $x_i \in \{w\le T\} \subset B_R(o)$. Also $x_i\not\to o$ because of the uniform upper bound \eqref{eq:upperboundw}. Hence we have again the convergence $x_i\to x \in \overline{B}_R(o) \setminus\{o\}$, up to subsequence, and one derives a contradiction as in the previous case.

Recalling \cref{lem:expcap}, for  any $\sigma \in (-\infty,T), \eps>0$ there exists $\tilde p>1$ such that
\[
\begin{split}
4\pi\left( \frac{3-p_i}{p_i-1} \right)^{p_i-1}e^{t-\eps}
&=
\mathrm{Cap}_{p_i}(E^{p_i}_{t-\eps} , B_{2R}(o)) \le \mathrm{Cap}_{p_i}(E_t^+ , B_{2R}(o)) \le \mathrm{Cap}_{p_i}(E^{p_i}_{t+\eps} , B_{2R}(o))
\\&
=
4\pi\left( \frac{3-{p_i}}{{p_i}-1} \right)^{{p_i}-1}e^{t+\eps},
\end{split}
\]
for any $p_i \in (1,\tilde p)$ and $t \in [\sigma, T]$. Letting first $p_i\to 1$ and then $\eps\to0$ and $\sigma\to-\infty$, we find that
\[
\lim_{i} \mathrm{Cap}_{p_i}(E_t^+ , B_{2R}(o)) = 4\pi e^t,
\]
for any $t \in (-\infty, T]$. Then the first equality in \eqref{eqn:AsymptoticsPerimeterAndH} will follow if we prove that
\begin{equation}\label{zzEqConvergenzaCapToP}
    \lim_{p\to 1} \mathrm{Cap}_p(E_t^+ , B_{2R}(o)) = P (E_t),
\end{equation}
for any $t \in (-\infty, T]$.\\
Fix $t \in (-\infty, T]$. It follows from the very definition of weak IMCF as in \cite[Minimizing Hull Property 1.4]{HuiskenIlmanen} that $E_t^+$ is strictly outward minimizing relatively to $B_{2R}(o)$; meaning that whenever $E_t^+ \subsetneq F \subset\subset B_{2R}(o)$, then $P(E_t^+) < P(F)$. It is readily checked that the proof of \cite[Theorem 1.2]{FogagnoloMazzieriHulls} can be localized in the ball $B_{2R}(o)$, thus yielding that
\[
\lim_{p\to 1} \mathrm{Cap}_p(E_t^+ , B_{2R}(o)) = P (E_t^+).
\]
Finally, as in \cite[Minimizing Hull Property 1.4(iv)]{HuiskenIlmanen}, there holds $P(E_t^+)=P(E_t)$, so that \eqref{zzEqConvergenzaCapToP} follows.

We show the second property in \eqref{eqn:AsymptoticsPerimeterAndH}.{ By applying \eqref{eq:upperboundw} with $\overline{r}=2r(x)$ and sending $p=p_i\to 1^+$, we get that there exists $\tilde \eta, \vartheta>0$ such that if $r(x)<\tilde\eta$, then $w(x)\leq 2\log (r(x))+\vartheta$. On the other hand, for $\bar t$ small enough we have $E_t \subset B_{e^{(t+C)/2}}(o)$
        for every $t<\bar t$, see 
        item (4) in \cref{thm:existenceimcfisoplocalized}. Up to possibly taking a smaller $\bar t$, for every $t<\bar t$ we have $\partial E_t \subset\{w=t\}\subset M\setminus B_{e^{(t-\vartheta)/2}}$.
        Thus
        \[
        \esssup_{\partial E_t} |\nabla w| \leq \esssup_{\partial E_t}\frac{\zeta}{\dist(o,x)} \leq \zeta e^{\vartheta/2} e^{-t/2},
        \]
        for almost every $t<\bar t$, where the first inequality comes from the fact that \eqref{eqn:SharpGradientKotNi} passes to the limit as $p_i\to 1^+$. Thus, for almost every $t\in (-\infty,\bar t)$, there holds
        \[
        \int_{\partial E_t} |\nabla w|^2\leq |\partial E_t|\zeta^2e^{\vartheta}e^{-t}=4\pi\zeta^2 e^{\vartheta}=:C_1 < +\infty,
        \]
        and then the first item is proved recalling that $H=|\nabla w|$ $\mathcal{H}^{n-1}$-almost everywhere on $\partial E_t$ for almost every $t\in (-\infty,\bar t)$.

        The fact that $\mathfrak{m}_H(\partial E_t)\to 0$ for almost every $t\to-\infty$ is a direct consequence of the first item and the definition of Hawking mass.\\
        The last part of the second item is the analogue of \cite[Geroch Monotonicity Formula 5.8]{HuiskenIlmanen}, which provides the analogous monotonicity formula for a weak IMCF globally defined on a complete manifold. The validity of \eqref{eqn:GerochMonotonicity} in the present setting is well-known to the experts, and can be deduced as a consequence of \cite[Theorem 4.1]{BenattiPludaPozzetta}. In fact, the estimate \eqref{eqn:GerochMonotonicity}, which does not takes into account all the positive terms present in the complete \cite[Geroch Monotonicity Formula 5.8]{HuiskenIlmanen} (see also \cite[Theorem 5.5]{BenattiPludaPozzetta}), can also be derived by suitably passing to the limit $p\to 1$ in \cite[Theorem 2.11]{BenattiFogagnoloMazzieri23SIGMA}, taking into account \cite[Corollary 4.7, Corollary 4.8]{BenattiPludaPozzetta}.
        }
\end{proof}

\subsection{Connectedness of level sets of the weak IMCF}\label{sec:Connectedness}

In the following lemma we collect some facts about connectedness of level set of the weak IMCF.
This is analogous to \cite[Connectedness Lemma 4.2]{HuiskenIlmanen}, we provide a proof for the convenience of the reader.
    
 \begin{lemma}\label{lem:Connectedness}
         Let $\Omega$ be an open set 
         with Lipschitz boundary 
         in a smooth complete $n$-dimensional Riemannian manifold $(M,g)$ with $n\le 7$, and let $o\in\Omega$. Let $\Omega\subset \subset U$, where $U$ is an open set, and let $w\in\mathrm{Lip}_{\mathrm{loc}}(U\setminus\{o\})$ be a weak solution of the IMCF on $U\setminus\{o\}$, see \cref{def:WeakIMCFLevelSet}. Then the following hold.
         \begin{enumerate}
             \item Let $t\in\mathbb R$. Then every connected component of $\{w<t\}\cap\Omega$ (resp., $\{w>t\}\cap\Omega$) is not relatively compact in $\Omega\setminus\{o\}$.
             \item Assume further that $\partial\Omega$ is connected, and there exist $t_0<t_1$ such that:
             \begin{itemize}
                 \item $\{w<t_1\} \subset\subset \Omega$;
                 \item there exists an open set $\Omega''\ni o$, such that $\Omega''\setminus\{o\}$ is connected, and $\Omega''\setminus\{o\}\subset\{w<t_0\}$.
             \end{itemize}
             Then for every $t\in (t_0,t_1)$ we have that both $\{w<t\}$ and $\{w>t\}\cap\Omega$ are connected.
             \item 
            Let the hypotheses of (2) above be satisfied. Assume further that $\Omega$ is connected, and $H_1(\Omega;\mathbb Z)=\{0\}$. Then $\partial\{w<t\}$ is connected for every $t\in (t_0,t_1)$.
         \end{enumerate}
\end{lemma}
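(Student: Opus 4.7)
For (1), I would argue by contradiction. Suppose $V$ is a component of $\{w<t\}\cap\Omega$ with $\overline V\subset\subset\Omega\setminus\{o\}$. Since distinct components of the open set $\{w<t\}$ are separated, I can isolate $V$ by an open set $W$ with $\overline V\subset\subset W\subset\subset\Omega\setminus\{o\}$ and $W\cap\{w<t\}=V$, so that $w\ge t$ on $\partial W$. Setting $m\eqdef\min_{\overline V}w<t$ and picking $\eps\in(0,t-m)$, the Lipschitz competitor $v$ defined as $\max(w,m+\eps)$ on $W$ and as $w$ elsewhere differs from $w$ only on $\{w<m+\eps\}\cap V\subset\subset W$, with $|\nabla v|=0$ there. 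Testing in the variational formulation \eqref{eqn:IMCFFunctionFormulation} with $K=\overline W$ yields
\[
\int_{\{w<m+\eps\}\cap V}|\nabla w|\,(1+w-m-\eps)\,\le\,0,
\]
and since the bracket lies in $(1-\eps,1)$, this forces $|\nabla w|=0$ a.e.\ on $\{w<m+\eps\}\cap V$. On each connected component $A'$ of $\{w<m+\eps\}\cap V$ the Lipschitz function $w$ is then constant with some value $c(A')<m+\eps$, contradicting continuity of $w$ at points of $\partial A'\cap V\subset V\setminus\{w<m+\eps\}$, where $w\ge m+\eps$. The case of $\{w>t\}\cap\Omega$-components is symmetric, using $v=\min(w,M-\eps)$ with $M\eqdef\max_{\overline V}w>t$.

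For (2), by (1) every component of $\{w<t\}\cap\Omega$ (resp.\ $\{w>t\}\cap\Omega$) must accumulate on $\{o\}\cup\partial\Omega$. The inclusion $\{w<t\}\subset\{w<t_1\}\subset\subset\Omega$ rules out accumulation at $\partial\Omega$ for components of $\{w<t\}$, so they all accumulate at $o$. Since $\Omega''\setminus\{o\}\subset\{w<t_0\}\subset\{w<t\}$ is connected, it lies in a single component $V_0$, and any other component $V_1$, which must accumulate at $o$, would meet the neighborhood $\Omega''\setminus\{o\}$ of $o$ and hence coincide with $V_0$. For $\{w>t\}\cap\Omega$, the inclusion $\Omega''\setminus\{o\}\subset\{w<t_0\}$ keeps the set away from $o$, so every component accumulates on $\partial\Omega$; continuity of $w$ together with $w\ge t_1>t$ on the connected Lipschitz $\partial\Omega$ produces a connected open collar $C\subset\{w>t\}\cap\Omega$ of $\partial\Omega$, and since each component of $\{w>t\}\cap\Omega$ must meet $C$ and $C$ is connected, each component contains $C$, leaving only one.

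For (3), recall from item (1) of \cref{rem:PropertiesOfFlow} that $\partial\Omega_t$ is a two-sided $C^{1,\alpha}$-hypersurface in $\Omega$ for every $t$, that $\partial\Omega_t=\{w=t\}$ for a.e.\ $t$, and that $\partial\Omega_s\to\partial\Omega_t$ in $C^1$ as $s\nearrow t$. For $t\in(t_0,t_1)$ in the full-measure set where $\partial\Omega_t=\{w=t\}$, part (2) gives $\Omega\setminus\partial\Omega_t=\Omega_t\sqcup(\{w>t\}\cap\Omega)$ with exactly two connected components. Taking an open tubular neighborhood $N$ of $\partial\Omega_t$ (so $N\setminus\partial\Omega_t\simeq\partial\Omega_t\sqcup\partial\Omega_t$) and noting that $H_1(\Omega;\mathbb Z_2)=0$ follows from $H_1(\Omega;\mathbb Z)=0$ via universal coefficients, the Mayer--Vietoris sequence in $\mathbb Z_2$-coefficients for the cover $\Omega=(\Omega\setminus\partial\Omega_t)\cup N$ reduces to
\[
0\longrightarrow H_0(N\setminus\partial\Omega_t;\mathbb Z_2)\longrightarrow H_0(\Omega\setminus\partial\Omega_t;\mathbb Z_2)\oplus H_0(N;\mathbb Z_2)\longrightarrow H_0(\Omega;\mathbb Z_2)\longrightarrow 0,
\]
and the dimension count $2k=2+k-1$ yields $k=1$, where $k$ is the number of components of $\partial\Omega_t$. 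For every remaining $t\in(t_0,t_1)$, approximating by $s_n\nearrow t$ in this good set and applying the $C^1$-convergence $\partial\Omega_{s_n}\to\partial\Omega_t$, which identifies the hypersurfaces as graphs over each other for large $n$, preserves connectedness.

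The most delicate point is the competitor construction in (1): the isolating neighborhood $W$ is crucial, for if the variation $v$ were allowed to modify $w$ near minima of other components of $\{w<t\}$, the local cancellation producing $|\nabla w|\equiv 0$ in the variational inequality would be lost, and no contradiction would follow.
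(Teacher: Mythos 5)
Your items (1) and (2) follow essentially the paper's route (constant/truncation competitors in the weak formulation \eqref{eqn:IMCFFunctionFormulation}, then the accumulation-at-$o$/at-$\partial\Omega$ argument), and item (3) is a valid Mayer--Vietoris argument via a different cover. One step in (1) does not hold as stated, though it is repairable: you assert that ``distinct components of the open set $\{w<t\}$ are separated'' and hence that there is an open $W\supset\supset\overline V$ with $W\cap\{w<t\}=V$. This is false for general open sets --- distinct components can have intersecting closures (two half-disks meeting along a diameter), in which case no such $W$ exists and your competitor would also modify $w$ on the other component, where the sign of $1+w-m-\eps$ is not controlled. The fix is either (i) to drop $W$ entirely and set $v\eqdef\max(w,m+\eps)$ on $V$ and $v\eqdef w$ elsewhere: since $w\ge t>m+\eps$ on $\partial V$ (any $x\in\partial V\subset\Omega$ with $w(x)<t$ would lie in $\{w<t\}\cap\Omega$ and hence in $V$, which is open), $v$ is locally Lipschitz and $\{v\ne w\}=\{w<m+\eps\}\cap V\subset\subset\Omega\setminus\{o\}$, so the computation goes through unchanged; or (ii) to invoke the $(\Lambda,r_0)$-minimality/$C^{1,\alpha}$ regularity of $\partial\{w<t\}$ (item (1) of \cref{rem:PropertiesOfFlow}) to see that each boundary point lies in the closure of exactly one component, so closures of distinct components are in fact disjoint. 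A second, smaller point: your contradiction at $\partial A'\cap V$ presupposes $\partial A'\cap V\neq\emptyset$; if it is empty then $A'$ is clopen in the connected set $V$, so $A'=V$ and $w\equiv m$ on $\overline V$, contradicting $w\ge t$ on $\partial V$. The paper's own proof of the sublevel case avoids all of this by passing to a component $C''$ of $\{w<\bar t+\eta\}$ inside $C'$ and reusing the superlevel competitor $v\equiv\bar t+\eta$ there.

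For item (3) the paper thickens the level set in the range of $w$: it takes $A=\{w<t+\eta\}\cup\{o\}$ and $B=\{w>t-\eta\}\cap\Omega$, so that $A\cap B=\{t-\eta<w<t+\eta\}$ has at least as many components as $\{w=t\}$ for small $\eta$, and both $A$ and $B$ are connected directly by item (2); no tubular neighborhood is needed. Your version thickens in space instead, using a tubular neighborhood $N$ of the compact $C^{1,\alpha}$ hypersurface $\partial\Omega_t$; this works but requires two extra geometric inputs (two-sidedness of $\partial\Omega_t$, which holds since it bounds the open set $\Omega_t$, and the existence of a collar for a $C^1$ compact hypersurface), and you should note that $\Omega\setminus\partial\Omega_t$ contains $o$, so its two components are $\{o\}\cup\Omega_t$ (open because $\Omega''\subset\{o\}\cup\{w<t_0\}$, connected because $o\in\overline{\Omega_t}$) and $\{w>t\}\cap\Omega$; the count of two is unaffected and your dimension count $2k=k+1$ then gives $k=1$ as claimed. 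The reduction of the non-generic levels $\tau$ via $C^1$-convergence of $\partial\Omega_{t_i}\to\partial\Omega_\tau$ is exactly the paper's.
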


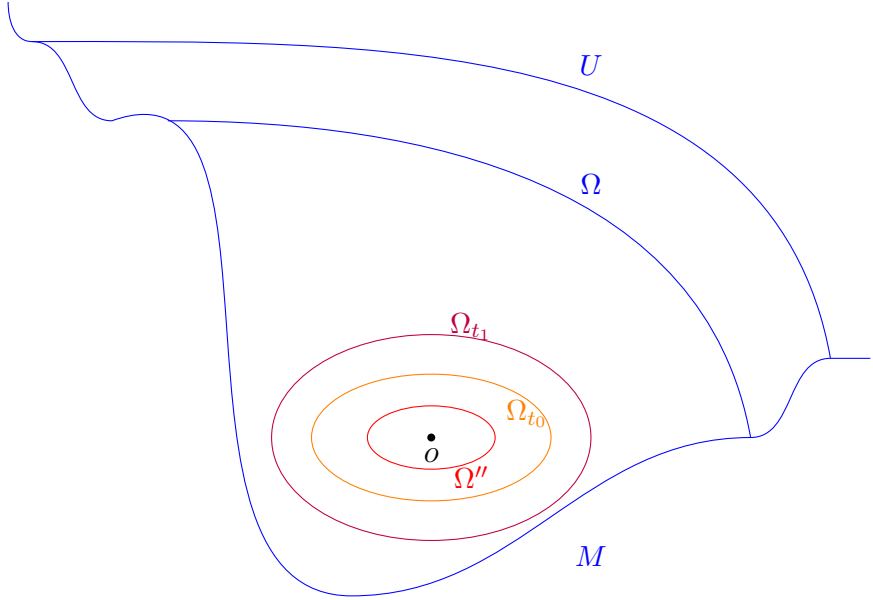
\begin{figure}[h]
    \begin{center}
     \begin{tikzpicture}[scale=1.05]
    \draw[blue] (-1.3,7.5) to[out=270,in=180] (-1,7) to[out=0,in=180] (0,6) to[out=20,in=180] (3,0) to[out=0,in=180] (8,2) to [out=0,in=180] (9,3) to (9.5,3);
    \node[blue] at (6,0.5) {\small$M$};
    
    \draw[blue] (-1,7) to[out=0,in=100] (9,3);
    \node[blue] at (6,6.7) {\small$U$};
    
    \draw[blue] (0.7,6) to[out=0,in=100] (8,2);
    \node[blue] at (6,5.2) {\small$\Omega$};
    
    \fill (4,2) circle (0.05) node[below] {$o$};
    
    
    \draw[purple] (4,2) ellipse (2 and 1.3);
    \node[purple] at (4.5,3.4) {\small$E_{t_1}$};
    
    \draw[orange] (4,2) ellipse (1.5 and 0.8);
    \node[orange] at (5.2,2.3) {\small$E_{t_0}$};
    
    \draw[red] (4,2) ellipse (0.8 and 0.4);
    \node[red] at (4.5,1.5) {\small$\Omega''$};
\end{tikzpicture}
\end{center}
\caption{The picture sketches the assumptions of  \cref{lem:Connectedness}.}\label{fig:DisegnoLemmaTopologico}
\end{figure}

\begin{proof}[Proof of \cref{lem:Connectedness}]
         The proof of item (1) follows verbatim as in \cite[Connectedness Lemma 4.2(i)]{HuiskenIlmanen}. Let us repeat it here for the ease of the reader. Assume by contradiction a connected component $C$ of $\{w>t\}\cap\Omega$ is relatively compact in $\Omega\setminus\{o\}$. Then consider the function $v:=w$ on $U\setminus (C\cup\{o\})$ and $v:=t$ on $C$. By \eqref{eqn:IMCFFunctionFormulation} we get
         \[
         \int_C |\nabla w|+w|\nabla w|\leq \int_C t|\nabla w|.
         \]
         Since $C\subset \{w>t\}$, the latter implies that $|\nabla w|=0$ on $C$, and thus $w=t$ on $C$, which is a contradiction. 
         
         Now assume by contradiction a connected component $C'$ of $\{w<t\}\cap\Omega$ is relatively compact in $\Omega\setminus\{o\}$. Take $\bar t:=\min_{\overline{C'}} w$. Then for $0<\eta<1$ small enough there is a connected component $C''$ of $\{w<\bar t+\eta\}\cap\Omega$ inside $C'$. Notice that $w\geq\bar t>\bar t+\eta -1$ on $C''$. Repeating the previous argument with $\bar t+\eta$ in place of $t$ and $C''$ in place of $C$ gives again a contradiction.

         The item (2) follows from item (1). First, by the assumption in the first bullet, for $t \in (t_0, t_1)$ every connected component of $\{w<t\}$ stays away from $\partial\Omega$. Then, from item (1), for every $t\in (t_0,t_1)$, $o$ is in the closure of any connected component of $\{w<t\}$. Then every connected component of $\{w<t\}$ intersects $\Omega''\setminus\{o\}$. Since $\Omega''\setminus\{o\}$ is connected, and $\Omega''\setminus\{o\}\subset \{w<t_0\}\subset \{w<t\}$, thus every connected component of $\{w<t\}$ contains $\Omega''\setminus\{o\}$. Thus, there exists at most one connected component of $\{w<t\}$, because every connected component contains $\Omega''\setminus\{o\}$, which is itself connected. Similarly, from the hypotheses of the second bullet, for every $t\in (t_0,t_1)$, we have $\partial\Omega\subset\{w>t\}$, and $\{w>t\}\cap\Omega\subset\Omega\setminus\overline{\Omega''}$. Thus, every connected component of $\{w>t\}\cap\Omega$ avoids $o$, and then, from item (1), its closure must then intersect $\partial\Omega$. Since $\partial\Omega$ is connected, there exists at most one connected component of $\{w>t\}\cap\Omega$, arguing as before.

         The item (3) is inspired by \cite[Connectedness Lemma 4.2(ii)]{HuiskenIlmanen}. Analogous arguments have appeared in \cite[Lemma 2.3]{MunteanuWang}, \cite[Lemma 6.1]{JDGLiTam}, and \cite[Lemma 4.46]{LeeBook}. We give here a self-contained proof using item (2) and the Mayer--Vietoris sequence. 

        {Recall from item (1) of \cref{rem:PropertiesOfFlow} that $\partial\{w<t\}$ is $C^{1,\alpha}$ for every $t \in (t_0,t_1)$, and that $\partial \{w<t\} = \{w=t\}$ for almost every $t \in (t_0,t_1)$. It is sufficient to prove that $\partial \{w<t\}$ is connected for $t$ such that $\partial \{w<t\} = \{w=t\}$. Indeed, for $\tau \in (t_0,t_1)$ such that $\partial \{w<\tau\} \neq \{w=\tau\}$, there exists a sequence $t_i\nearrow \tau$ such that $\partial \{w<t_i\} = \{w=t_i\}$. From item (1) of \cref{rem:PropertiesOfFlow} we know that $\partial \{w < t_i\} \to \partial \{ w < \tau\}$ in $C^1$, hence connectedness will be preserved in the limit.
         
        Hence we can assume by contradiction that there exists $t\in (t_0,t_1)$ such that $\partial\{w<t\}=\{w=t\}$ is not connected.
        Since $\partial\{w<t\}$ is $C^{1,\alpha}$, it has a finite number $m\geq 2$ of connected components. Since $\bigcap_{\eta>0} \{t-\eta<w<t+\eta\} = \{w = t\} = \partial \{w < t\}$,} there exists $\eta$ small enough such that $[t-\eta,t+\eta]\subset (t_0,t_1)$ and $\{t-\eta<w<t+\eta\}$ has $m'\geq 2$ connected components. Call $A:=\{w<t+\eta\}\cup\{o\}$ and $B:=\{w>t-\eta\}\cap\Omega$. Notice that $A$ and $B$ are open, and by item (2) they are both connected.
         
         Finally notice that $A\cap B=\{t-\eta<w<t+\eta\}$ is not connected, and $A\cup B=\Omega$. The Mayer--Vietoris exact sequence (where homology is understood with integer coefficients) ends with 
         \[
         \ldots \rightarrow H_1(\Omega) \rightarrow H_0(A\cap B) \rightarrow H_0(A)\oplus H_0(B) \rightarrow H_0(\Omega) \rightarrow 0.
         \]
         Recall that for a topological space $X$ there holds $H_0(X;\mathbb Z)\cong \mathbb Z^\ell$, where $\ell$ is the number of connected components of $X$. Thus by using the assumptions of item (3) the previous exact sequence becomes 
         \[
         \ldots \rightarrow 0 \rightarrow \mathbb Z^{m'}\rightarrow \mathbb Z^2 \rightarrow \mathbb Z \rightarrow 0,
         \]
         from which $\mathbb Z\cong \mathbb Z^2 / \mathbb Z^{m'}$, and thus $m'=1$, which results in a contradiction.
     \end{proof}

\section{Proof of the main results}\label{sec:Proofs}

In this section we prove the main theorems \cref{thm:CorIsop}, \cref{thm:CorMassa}.

\subsection{Producing a set satisfying the reverse Euclidean isoperimetric inequality}\label{sec:Producingset}

In this section we show how, in the hypotheses of \cref{thm:CorMassa} and \cref{thm:small}, we can produce sets  that satisfy the Euclidean reverse isoperimetric inequality with sharp constant.

\begin{lemma}\label{lem:AlmostShi}
    Let $(M,g)$ be a smooth complete Riemannian manifold of dimension $3$. Let $o \in M$ and $R>0$. Let $w$ be given by \cref{thm:existenceimcfisoplocalized}.
    Let $T \in\R$ be such that $\{w<T\}\subset\subset B_\rho(o)$ for some $\rho\le R$. Suppose that $\partial \{w < t\}$ is connected for any $t <T$. If $R_g\geq -\delta$ on $B_\rho(o)$, for some $\delta>0$, then
    \begin{equation}
    \label{eq:quasishi}
    |E_t|\geq \frac{1}{\sqrt{1+\frac{2}{3}\delta e^T}}\frac{P(E_t)^{3/2}}{6\sqrt{\pi}}\qquad \forall t<T,
    \end{equation}
    where $E_t \eqdef \{ w <t\}$.
\end{lemma}

\begin{proof}
We recall that $E_t$ has $C^{1,\alpha}$ boundary, $\partial E_t$ admits weak mean curvature $H$ for all $t<T$, and $H=|\nabla w|>0$ $\haus^2$-a.e. for a.e. $t\in(-\infty,T)$, see item (1) of \cref{rem:PropertiesOfFlow}.
The Hawking mass of a set $\Omega$ with $C^1$ boundary $\partial\Omega$ possessing weak mean curvature $H$ is given by
\[
\mathfrak{m}_H(\partial\Omega)=\frac{P(\Omega)^{1/2}}{(16\pi)^{3/2}}\left(4\pi-\int_{\partial\Omega}\frac{H^2}{4}\right).
\]
As $\{w \le T'\} \subset \subset B_\rho(o)$ for any $T'<T$, we can apply \cref{prop:PerimetroEHawkingsProperty}, which yields that $P( E_t)=4\pi e^t$ for any $t<T$, and that $\mathfrak{m}_H(\partial E_t)\to 0$ for almost every $t\to-\infty$.
Hence the Geroch Monotonicity formula \eqref{eqn:GerochMonotonicity} along a sequence $r\to-\infty$ and $s=t$, together with the fact that $R_g\geq -\delta$ on $B_\rho(o)$, gives that for a.e. $t\in (-\infty, T)$ there holds
    \[
    \mathfrak{m}_H(\partial E_t)\geq -\frac{\delta}{(16\pi)^{3/2}}\int_{-\infty}^t P(E_t)^{3/2}=-\frac{\delta}{(16\pi)^{3/2}}\int_{-\infty}^t(4\pi)^{3/2}e^{3t/2}=-\frac{\delta}{12}e^{3t/2}.
    \]
    Since $\mathfrak{m}_H(\partial E_t) = \frac{2\pi^{1/2}e^{t/2}}{64\pi^{3/2}}\left(4\pi-\int_{\partial E_t}\frac{H^2}{4}\right)$, for a.e. $t\in (-\infty,T)$, we find
\begin{equation}\label{eqn:EstimateOnH2}
     \int_{\partial E_t} H^2\leq 16\pi + \frac{32}{3}\delta\pi e^t.
\end{equation}
    By H\"older inequality we get that for almost every $t\in (-\infty,T)$ there holds
    \begin{equation}\label{eqn:HolderPerimeter}
    P(E_t)\leq \left(\int_{\partial E_t}|\nabla w|^2\right)^{1/3}\left(\int_{\partial E_t}\frac{1}{|\nabla w|}\right)^{2/3}.
    \end{equation}
    Hence, by recalling that $\int_{\partial E_t}|\nabla w|^2=\int_{\partial E_t} H^2$ is finite by \eqref{eqn:EstimateOnH2} for almost all $t\in (-\infty,T)$, recalling that $|\nabla w|>0$ $\haus^2$-a.e. on $\partial E_t$ for a.e. $t \in (-\infty,T)$, and by using the coarea formula together with \eqref{eqn:HolderPerimeter} and \eqref{eqn:EstimateOnH2}, for any $t<T$ we obtain
    \begin{equation}
    \begin{split}
    \label{eq:chainreverse}
        |E_t| &\geq \int_{E_t\cap\{|\nabla w|>0\}} \frac{1}{|\nabla w|}|\nabla w| = \int_{-\infty}^t \left(\int_{\partial E_\tau\cap\{|\nabla w|>0\}} \frac{1}{|\nabla w|}\right)\de\tau \\
        &=\int_{-\infty}^t \left(\int_{\partial E_\tau} \frac{1}{|\nabla w|}\right)\de\tau \geq \int_{-\infty}^t \left(P( E_\tau)^{3/2}\left(\int_{\partial E_\tau}|\nabla w|^2\right)^{-1/2} \de \tau \right)\\
        &\geq \int_{-\infty}^t \frac{2\pi e^{3\tau/2}}{\sqrt{1+\frac{2}{3}\delta e^\tau}} \de \tau  \geq \int_{-\infty}^t \frac{2\pi e^{3\tau/2}}{\sqrt{1+\frac{2}{3}\delta e^T}} \de \tau \\
        &=
        \frac{1}{\sqrt{1+\frac{2}{3}\delta e^T}} \, \frac{4}{3}\pi e^{3t/2}=\frac{1}{\sqrt{1+\frac{2}{3}\delta e^T}}\, \frac{P(E_t)^{3/2}}{6\sqrt{\pi}}.
      \end{split}  
    \end{equation}
\end{proof}

In the $C^0_{\rm loc}$-asymptotically flat case we construct sets with arbitrarily large perimeter and volume that satisfy the reverse Euclidean isoperimetric inequality. This represents the crucial step for the proof of \cref{thm:CorMassa} and of \cref{thm:CorIsop}.
 
\begin{proposition}\label{prop:GrossiQuantoVoglio}
    Let $(M,g)$ be a $3$-dimensional complete $C^0$-Riemannian manifold without boundary, let $K\subset M$ be a compact set, and let $\Omega$ be an unbounded connected component of $M\setminus K$. Assume that $\Omega$ is $C^0_{\mathrm{loc}}$-asymptotic to $\mathbb R^3$ (\cref{def:C0locasymptotic}), and that $R_g\geq 0$ in the approximate sense on $\Omega\setminus K'$ (\cref{def:NonnegativeScalInApproximate}), where $K'\subset M$ is a compact set. Then, there exists a universal constant $\vartheta\in(0,1)$ such that the following holds.
    
    For every $\mathscr{P}>0$, there exists a set of finite perimeter $E\subset\subset \Omega\setminus K'$ such that
    \[
    \vartheta \mathscr{P}\leq P(E)\leq \mathscr{P},
    \]
    and
    \begin{equation}\label{eqn:IlControlloAlContrario}
    |E|\geq \frac{1}{6\sqrt{\pi}}\mathscr{P}^{3/2}\geq \frac{1}{6\sqrt{\pi}}P(E)^{3/2}.
    \end{equation}
\end{proposition}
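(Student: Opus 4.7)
The plan is to realize the sharp reverse Euclidean isoperimetric inequality for a single set in $(M,g)$ by passing to an $L^1$-limit of IMCF level sets constructed on the smooth scalar-curvature approximations $g_j$ of $g$ on a region that is arbitrarily close to Euclidean.

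Fix $\mathscr{P}>0$ and set $\vartheta=1/2$. Let $\bar C$ be the universal upper bound for the constant appearing in \cref{thm:existenceimcfisoplocalized} when the relevant punctured ball is $2$-biLipschitz to a Euclidean one (so that the Sobolev, Poincaré, Ahlfors and covering constants are universally controlled, and one may take $\rho=R/2$ in \eqref{eqn:CondizioneConnessione}). Choose $R=R(\mathscr{P})$ so large that $2\log R-\bar C-1\geq \log(\mathscr{P}/(4\pi))+1$, and $\eta>0$ so small that $(1+\eta)^4\leq 2$. By the $C^0_{\mathrm{loc}}$-asymptoticity of $\Omega$ to $\R^3$ and the unboundedness of $\Omega$, pick $p\in \Omega$ so far from $K\cup K'$ that $B_{2R}(p)\subset\Omega\setminus K'$ admits a $(1+\eta)$-biLipschitz diffeomorphism $F$ onto the Euclidean ball $B_{2R}^{\mathrm{eu}}(0)$; in particular $B_{2R}(p)$ is diffeomorphic to, and therefore simply connected as, a Euclidean ball. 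Using \cref{def:NonnegativeScalInApproximate}, take smooth $g_j\to g$ locally uniformly with $R_{g_j}\geq -\varepsilon_j\to 0$ on $\Omega\setminus K'$; for $j$ large, $g_j$ is close enough to $g$ on $B_{2R}(p)$ that $B_{2R}^{g_j}(p)$ is still $2$-biLipschitz to a Euclidean ball and all the geometric constants of \cref{thm:existenceimcfisoplocalized} remain universally bounded.

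Apply \cref{thm:existenceimcfisoplocalized} on $(M,g_j)$ at the basepoint $p$ with parameter $R$, denoting the resulting weak IMCF by $w_j$. Set $\tau\eqdef\log(\mathscr{P}/(4\pi))$. Item (4) of the theorem and the choice of $R$ give $\{w_j\leq\tau+1\}\subset B_R^{g_j}(p)$, while item (3) places a small punctured neighborhood of $p$ inside $\{w_j<t_0\}$ for some $t_0$ very negative. Combined with the simple connectedness of a Euclidean-like neighborhood sandwiched between $B_R^{g_j}(p)$ and $B_{2R}^{g_j}(p)$ (obtained via $F$), these match the hypotheses of \cref{lem:Connectedness}(3) and force $\partial\{w_j<t\}$ to be connected for every $t\in(t_0,\tau+\tfrac12)$. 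Then \cref{lem:AlmostShi} with $\delta=\varepsilon_j$ and $T=\tau+\tfrac12$ applied to $E_j\eqdef\{w_j<\tau\}$ yields
\[
P_{g_j}(E_j)=\mathscr{P},
\qquad
|E_j|_{g_j}\geq \frac{\mathscr{P}^{3/2}}{6\sqrt{\pi}\,\sqrt{1+\tfrac{2}{3}\varepsilon_j e^{\tau+1/2}}}.
\]

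Since $E_j\subset B_R(p)\subset\subset\Omega\setminus K'$ for $j$ large and $g_j\to g$ uniformly on $B_{2R}(p)$, the perimeters and volumes in $g_j$ and in $g$ differ by factors $1+o_j(1)$. By \cref{lem:PrecompactnessBV} applied to the convergence $(M,g_j,p)\to(M,g,p)$, a subsequence of $E_j$ converges in $L^1$ to some $E\subset\subset\Omega\setminus K'$, and lower semicontinuity of the perimeter together with convergence of the volume on the bounded region give
\[
P_g(E)\leq \mathscr{P},
\qquad
|E|_g\geq \frac{\mathscr{P}^{3/2}}{6\sqrt{\pi}}.
\]
Finally, transporting $E$ through $F^{-1}$ and applying the Euclidean isoperimetric inequality yields $|E|_g\leq (1+\eta)^6 P_g(E)^{3/2}/(6\sqrt{\pi})$; combined with the lower bound on $|E|_g$, this forces $P_g(E)\geq (1+\eta)^{-4}\mathscr{P}\geq\mathscr{P}/2=\vartheta\mathscr{P}$. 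The main subtlety is to coordinate the two approximation procedures---the scalar-curvature regularization $g_j\to g$ and the $C^0$-flat approximation of $B_{2R}(p)$ via $F$---so that every geometric constant used in \cref{thm:existenceimcfisoplocalized,lem:AlmostShi,lem:Connectedness} remains uniform and the limiting set $E$ inherits, with equality in the right direction, the quantitative estimates of \cref{lem:AlmostShi}.
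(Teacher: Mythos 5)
Your proposal is correct and follows essentially the same route as the paper's proof: locate a nearly Euclidean large ball far out in $\Omega$, run the localized weak IMCF of \cref{thm:existenceimcfisoplocalized} on the smooth approximations $g_j$ with uniformly controlled Sobolev/Poincar\'e/Ahlfors/covering constants, use \cref{lem:Connectedness} and \cref{lem:AlmostShi} to get the almost-sharp reverse isoperimetric inequality at the prescribed perimeter level, pass to the $L^1$-limit via \cref{lem:PrecompactnessBV}, and recover the lower perimeter bound from the Euclidean-type isoperimetric inequality on the nearly flat ball. The only (immaterial) differences are the explicit choice $\vartheta=1/2$ and that you obtain the final lower bound by transporting $E$ through the biLipschitz chart rather than invoking the Sobolev inequality, which is equivalent by \cref{rem:SobolevToIso}.
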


\begin{proof}
Let $\rho>1$, $\eta<1/2$ to be chosen. The choice of $\rho$, only depending on $\mathscr{P}$ and on geometric constants on $\mathbb R^3$, will be made clear during the proof in \eqref{eqn:SceltaRho}. We do not insist on the precise choice of $\eta$ for the sake of readability; however, it will be clear from the proof that choosing $\eta<10^{-3}$ is sufficient. In this proof we will repeatedly use the elementary metric results recorded in \cref{lem:MetricheVicineImplicaBilipschitz}, and in the last part of the proof of \cref{lem:TechnicalLemmaC0Manifold}.

By applying a contradiction argument and \cref{rem:BiLipschitz} (see also the beginning of the proof of \cref{cor:C0locasymptoticPi}) we can find a compact set $\mathcal{C}\supset  (K\cup K')$ such that for every $x\in \Omega\setminus \mathcal{C}$ there exists $F_x:(B_{64\rho+64}(x),g)\to (\mathbb R^3,g_{\mathrm{eu}})$ which is a $(1+\eta)$-biLipschitz diffeomorphism with its image, with $F_x(x)=0$, and whose image contains $B_{32\rho+32}^{\mathbb R^3}(0)$.
Let us now fix $o$ such that 
\[
B_{64\rho+64}(o)\subset\subset \Omega\setminus \mathcal{C}.
\]
Let $g_i$ be a sequence of smooth Riemannian metrics on $M$ converging to $g$ locally uniformly with $R_{g_i} \ge -\eps_i$ on $\Omega\setminus\mathcal{C}$. We will frequently pass to subsequences with respect to $i$ in the course of the proof, without relabeling. We will denote $B_s(p),B_s^i(p)$ the open balls of radius $s$ and center $p$ in the metrics $g,g_i$, respectively. Up to passing to a subsequence with respect to $i$, we can assume that 
\begin{equation}\label{eqn:Contenimento}
B^i_{\rho-1}(o)\subset\subset B_\rho(o)\subset F_o^{-1}(B_{2\rho}^{\mathbb R^3}(0)) \subset\subset B_{4\rho+4}^i(o)\subset\subset \Omega\setminus \mathcal{C}\,\,\text{for all $i\in\mathbb N$}.
\end{equation}

Notice that (see \cref{def:C0locconvergence}) the compact $\mathcal{C}$ can be chosen such that, additionally, we have 
\begin{equation}\label{eqn:BelleVicine}
|(g-F_o^*g_{\mathrm{eu}})_x(v,v)|\leq \eta(F_o^*g_{\mathrm{eu}})_x(v,v),
\end{equation}
for every $x\in B_{64\rho+64}(o)$, and every $v\in T_xM$. Then, for $i$ large enough, we have
\begin{equation}\label{eqn:ErControllo}
|(g^i-F_o^*g_{\mathrm{eu}})_x(v,v)|\leq 2\eta(F_o^*g_{\mathrm{eu}})_x(v,v),
\end{equation}
for every $x\in B_{32\rho+32}^i(o)$, and every $v\in T_xM$. As a consequence, the map $F_x:(B^i_{16\rho+16}(x),g_i)\to (\mathbb R^3,g_{\mathrm{eu}})$ is $(1+3\eta)$-biLipschitz with its image. Notice also that the image of this map contains the ball $B_{8\rho+8}^{\mathbb R^3}(0)$.
\smallskip

As a consequence of \eqref{eqn:ErControllo}, if $\eta$ is small enough, the constants 
\[
C_{1,\mathrm{Sob}}(B^i_{4\rho+4}(o)),C_P(B^i_{4\rho+4}(o)),C_A(B^i_{4\rho+4}(o))
\]
appearing in \cref{thm:existenceimcfisoplocalized} (and defined in \eqref{eqn:pSobolevInequality}, \eqref{eqn:PoincareConstant}, \eqref{eqn:AhlforsConstant}), are uniformly bounded from above by a universal constant multiplied by the value of the corresponding constants on $\mathbb R^3$, which are independent of $\rho$. Moreover, since $F_x:(B^i_{16\rho+16}(x),g_i)\to (\mathbb R^3,g_{\mathrm{eu}})$ is $(1+3\eta)$-biLipschitz with its image, and contains the ball $B_{8\rho+8}^{\mathbb R^3}(0)$, the following holds: for every $r\leq \rho+1$ we can connect any two points $p,q\in \partial B^i_r(o)$ with a continuous curve $\gamma$ in the annulus $A^i_{3r/4, 5r/4}(o)$. Indeed, it suffices to connect $F_x(p)$ with $F_x(q)$ with a curve $\tilde\gamma\subset \mathbb R^3$ in the annulus $A^{\mathbb R^3}_{7r/8,9r/8}(0)$, and take $\gamma:=F_x^{-1}(\tilde\gamma)$. Moreover, with the same reasoning, the constant $C_{\mathrm{cov}}(B_{\rho+1}(o))$ defined in \eqref{eqn:CCov} is bounded above by a universal constant which only depends on the following universal constant independent of $\rho$:
\begin{equation}\label{eqn:CCovNuova}
\begin{split}
\tilde{C}\eqdef \min\Big\{N\in\mathbb N \st 
&\text{$A^{\mathbb R^3}_{r/2,3r/2}(0)$ is covered by $N$ open balls of radius $r/4$} \\
&\text{with centers in $A^{\mathbb R^3}_{r/2,3r/2}(o)$ for any $0<r\leq \rho+1$} \Big\}.
\end{split}
\end{equation}

Hence, we can apply \cref{thm:existenceimcfisoplocalized} on the ball $B^i_{2R}(o):=B^i_{4\rho+4}(o)$, where the radius $\rho+1$ in this proof corresponds to the number $\rho$ in the statement of \cref{thm:existenceimcfisoplocalized}. As a result of the discussion above, the constant $C$ appearing in \cref{thm:existenceimcfisoplocalized} when applied to the ball $B^i_{4\rho+4}(o)$ is bounded above by a universal constant $\xi>1$ independent of $i$, $\rho$, for $i$ large enough.

Now let $w_i$ be the weak IMCF issuing from $o$ in $B^i_{4\rho+4}(o)$, given by \cref{thm:existenceimcfisoplocalized}. Denote $E_t^i:=\{w_i< t\}$. Let $T_{\rho,\xi}:=2\log(\rho-1)-\xi-1$, and take $\rho>1$ such that
\begin{equation}\label{eqn:SceltaRho}
    4\pi e^{T_{\rho,\xi}}=4\pi e^{2\log(\rho-1)-\xi-1}=\mathscr{P}.
    \end{equation} 
    Notice that the choice of $\rho$ only depends on $\mathscr{P}$ and on the universal constant $\xi$.
    By the last assertion of \cref{thm:existenceimcfisoplocalized}, applied with $r:=\rho-1$, we have that 
\[
\{w_i< T_{\rho,\xi}\}=\Omega_{T_{\rho,\xi}}^i\subset B_{\rho-1}^i(o)\subset\subset B_\rho(o).
\]

Now we aim at applying item (3) of \cref{lem:Connectedness} with the choices $U:= B_{4\rho+4}^i(o)$, $\Omega:=F_o^{-1}(B_{2\rho}^{\mathbb R^3}(0))$, and $t_1:=T_{\rho,\xi}$. We stress that $\{w_i< T_{\rho,\xi}\}\subset\subset F_o^{-1}(B_{2\rho}^{\mathbb R^3}(0))$, and
\[
F_o^{-1}(B_{2\rho}^{\mathbb R^3}(0)),\,\,\text{and}\,\,\partial F_o^{-1}(B_{2\rho}^{\mathbb R^3}(0)) = F_o^{-1}(\partial B_{2\rho}^{\mathbb R^3}(0))\,\text{are connected}.
\]
Moreover, for any arbitrary $t_0<T_{\rho,\xi}$, one can define $\Omega''$ to be a sufficiently small ball so that all the hypotheses of item (2) in \cref{lem:Connectedness} are met, since we have $w_i(x)\to -\infty$ as $x\to o$. 
Finally noticing that $H_1(F_o^{-1}(B_{2\rho}^{\mathbb R^3}(0));\mathbb Z)=\{0\}$, one gets that all the hypotheses of item (2) and (3) in \cref{lem:Connectedness} are met, and thus $\partial\{w_i< t\}$ is connected for every $t<T_{\rho,\xi}$.

Therefore, recalling that $R_{g_i} \ge -\eps_i$ on $B^i_{4\rho + 4}(o)$, by \cref{lem:AlmostShi} we get
\begin{equation}\label{eqn:LimitNew}
    |\Omega_{T_{\rho,\xi}}^i|_i\geq \frac{1}{\sqrt{1+\frac{2}{3}\varepsilon_i e^{T_{\rho,\xi}}}}\frac{P_i(\Omega_{T_{\rho,\xi}}^i)^{3/2}}{6\sqrt{\pi}} = \frac{1}{\sqrt{1+\frac{2}{3}\varepsilon_i e^{T_{\rho,\xi}}}}\frac{(4\pi e^{T_{\rho,\xi}})^{3/2}}{6\sqrt{\pi}} = \frac{1}{\sqrt{1+\frac{2}{3}\varepsilon_i \frac{\mathscr{P}}{4\pi}}}\frac{\mathscr{P}^{3/2}}{6\sqrt{\pi}},
    \end{equation}
    where we used that $P_i(E_t^i)=4\pi e^t$, see \cref{prop:PerimetroEHawkingsProperty}, where $P_i(\cdot)$ denotes perimeter computed with respect to the metric $g_i$.

    Now, since $\Omega^i_{T_{\rho,\xi}}\subset\subset B_\rho(o)$, and the perimeters $P_i(\Omega^i_{T_{\rho,\xi}})$ are equibounded, we can use the precompactness and lower semicontinuity result in \cref{lem:PrecompactnessBV} to get a set $E_\rho\subset B_\rho(o)$ such that $\Omega^i_{T_{\rho,\xi}}\to E_\rho\subset\subset \Omega\setminus \mathcal{C}$ in $L^1$, $P(E_\rho)\leq \mathscr{P}$, and, by passing \eqref{eqn:LimitNew} to the limit, such that
    \begin{equation}\label{eqn:IlControlloDentro}
    |E_\rho|\geq \frac{\mathscr{P}^{3/2}}{6\sqrt{\pi}},
    \end{equation}
    which completes the proof of \eqref{eqn:IlControlloAlContrario}. 
    
    Finally, notice that $E_\rho \subset\subset B_{\rho+1}(o)$. Moreover \eqref{eqn:BelleVicine} holds. This implies that, if $\eta<1$ is small enough, on $B_{\rho+1}(o)$ there holds a $(1,1^*)$-Sobolev inequality \eqref{eqn:pSobolevInequality} with $C_{1, {\rm Sob}}(B_{\rho+1}(o))$ bounded from above by a universal constant only depending on the constant in the Euclidean $(1,1^*)$-Sobolev inequality. By \cref{rem:SobolevToIso}, this implies that $P(E)\geq \tilde{\vartheta}|E|^{2/3}$ for a universal $\tilde{\vartheta}$, for every $E\subset\subset B_{\rho+1}(o)$. Applying the latter inequality on $E_\rho$, and using again \eqref{eqn:IlControlloDentro}, we finally get $P(E_\rho)\geq \vartheta \mathscr{P}^{2/3}$ for a universal $\vartheta$, concluding the proof.
\end{proof}

\subsection{Proof of the main results and consequences}

We are now ready to prove the main results of the paper.

\begin{proof}[Proof of \cref{thm:CorMassa}]
It is a direct consequence of \cref{prop:GrossiQuantoVoglio} and the definition of quasi-local isoperimetric mass in \cref{def:IsopMass}.
\end{proof}

\begin{proof}[Proof of \cref{thm:small}]
The proof follows from a simplification of the arguments that led to \cref{prop:GrossiQuantoVoglio}. Indeed, we can apply \cref{lem:AlmostShi} to $B^i_{r(o)}(o)$, where $r(o)$ is chosen so that $B^i_{r(o)}(o)$ is contractible for any $i$ sufficiently large. As in the proof of \cref{prop:GrossiQuantoVoglio}, one can find a common time $T\in\R$ such that weak inverse mean curvature flows $w^i_{r(o)}$ in $B^i_{r(o)}(o)$ issuing from $o$ are well-defined with $\{w^i_{r(o)} \le T\} \Subset B^i_{r(o)}(o)$. Passing to the limit as $i \to +\infty$ suitable sublevel sets $\{w^i_{r(o)} \leq t\}$, for $t < T$, one gets as in \cref{prop:GrossiQuantoVoglio} limit sets $E_t$ satisfying the reverse isoperimetric inequality with sharp constant. By \eqref{eq:imcfproper}, one can also choose $t$ so that the sets $E_t$ are contained in any arbitrarily small ball centered at $o$.
\end{proof}

\begin{proof}[Proof of \cref{thm:CorIsop}]
We deal with the existence of isoperimetric sets of big volume first.
Suppose by contradiction that there exists $v_0 >0$ such that for every $v\in (v_0,+\infty)$ there are no isoperimetric sets of volume $v$ in $M$. We claim that then $I$ is strictly increasing on $(2v_0,+\infty)$.

Let $v\in (2v_0,+\infty)$. By \cref{thm:MassDecompositionC0} there is an isoperimetric set $E\subset M$ with $|E|\leq v_0$ (possibly empty) and a ball $B\subset \mathbb R^3$ with $|B|\geq v-v_0 \geq v/2$ such that 
\[
v=|E|+|B|_{\mathrm{eu}},\qquad I(v)=P(E)+P_{\mathrm{eu}}(B).
\]
Let now $\varepsilon>0$ be such $B_\varepsilon$ is a ball in $\mathbb{R}^3$ concentric to $B$ and with volume $|B_\varepsilon|_{\mathrm{eu}}=|B|_{\mathrm{eu}}-\varepsilon$. Notice that, by approximating $B_\varepsilon\subset \mathbb R^3$ with sets diverging along the manifold, arguing as in the proof of the upper bound for the isoperimetric profile in \cref{cor:Continuity}, we get $I(v-\varepsilon)\leq P(E)+P_{\mathrm{eu}}(B_\varepsilon)$. Thus, taking $\varepsilon\to0$, we find
\[
\frac{I(v)-I(v-\varepsilon)}{\varepsilon} \geq \frac{P_{\mathrm{eu}}(B)-P_{\mathrm{eu}}(B_\varepsilon)}{\varepsilon}
\xrightarrow[\eps\to 0]{}
 2 \left(\frac{4\pi}{3}\right)^{1/3}|B|_{\mathrm{eu}}^{-1/3}
 \geq 2 \left(\frac{4\pi}{3}\right)^{1/3}v^{-1/3}.
\]
    Thus we get that the lower left Dini derivative satisfies
    \[
    \overline{D^-} I(v):=\liminf_{\varepsilon\to 0^+} \frac{I(v)-I(v-\varepsilon)}{\varepsilon} \geq 2 \left(\frac{4\pi}{3}\right)^{1/3}v^{-1/3} 
    >0, \qquad \forall v\in (2v_0,+\infty).
    \]
    Since $I$ is continuous by \cref{cor:Continuity},  the latter implies that $I$ is strictly increasing on $(2v_0,+\infty)$. Thus the sought claim is proved.

    We aim now at showing that there exists an isoperimetric set with volume strictly greater than $2v_0$, thus reaching a contradiction. Fix $v>2v_0$. By \cref{thm:MassDecompositionC0} there is an isoperimetric set $E\subset M$ with $|E|\leq v_0$ and a ball $B\subset \mathbb R^3$ with $|B|\geq v-v_0>0$ such that 
\[
v=|E|+|B|_{\mathrm{eu}},\qquad I(v)=P(E)+P_{\mathrm{eu}}(B).
\]
By \cref{thm:BoundedIsoperimetricResidues} we know that $E$ is bounded. 

We apply \cref{prop:GrossiQuantoVoglio} with $K=\emptyset$ and $K'=\overline{B}_r(o)$ for some ball $B_r(o)$ with $r>1$ such that $E\cup \mathcal{C} \subset B_{r-1}(o)$, where $\mathcal{C}$ is as in the statement. Then there is a set $F\subset M$ such that $F\subset\subset M\setminus K'$, $P(F)\leq P_{\mathrm{eu}}(B)$, and 
\[
|F|\geq \frac{1}{6\sqrt{\pi}}P_{\mathrm{eu}}(B)^{3/2} = |B|_{\mathrm{eu}}.
\]
 Thus the set $E\cup F$ is such that 
\[
|E\cup F| = |E|+|F| \geq |E|+|B|_{\mathrm{eu}} = v.
\]
Hence, since $I$ is strictly increasing on $(2v_0,+\infty)$, one gets
\[
I(|E\cup F|) \geq I(v) = P(E)+P_{\mathrm{eu}}(B),
\]
but at the same time
\[
\begin{split}
I(|E\cup F|) &\leq P(E\cup F)=P(E)+P(F)\leq P(E)+P_{\mathrm{eu}}(B).
\end{split}
\]
Hence all the inequalities in the previous formula are equalities, and then $E\cup F$ is an isoperimetric set with volume $>2v_0$, which is a contradiction. 
\smallskip

We now briefly argue for the existence of isoperimetric sets with small volume. Assume again for a contradiction that there exists $v_0$ such that no isoperimetric set of volume $v$ exists for any $v \in (0, v_0)$. Then, by \cref{thm:MassDecompositionC0}, for every $v\in (0,v_0)$ we have $I(v) = P_{\rm eu}(B_v)$, where $B_v$ is the round ball in $(\R^3, \delta)$ of volume $v$. We can now reach a contradiction as follows. By \cref{thm:small} there is $v'\in (0,v_0)$ and $F$ such that 
\[
|F|=v', \qquad |F|\geq \frac{1}{6\sqrt{\pi}}P(F)^{3/2}.
\]
Thus, since for every $v\in (0,v_0)$ we have $I(v)=P_{\mathrm{eu}}(B_v)$, we can use the information above to conclude that 
\[
P(F)\geq I(v')= \sqrt[3]{36\pi} (v')^{2/3} \geq P(F).
\]
Hence, all the above inequalites are in fact equalities, and $F$ is an isoperimetric set with volume $v'\in (0,v_0)$, reaching a contradiction.
\end{proof}

In the proof of \cref{thm:CorIsop} we argued that, if for some $v_0 > 0$ no isoperimetric sets exist for any volume $v > v_0$  then 
the isoperimetric profile is strictly increasing for large volumes. This implied that some isoperimetric set of large volume exists, resulting in a contradiction. In fact, if the isoperimetric profile is strictly increasing, then one can deduce that isoperimetric sets exist for any volume. This is what happens in a smooth asymptotically flat $3$-manifold of nonnegative scalar curvature that is complete with no closed minimal surfaces or endowed with a horizon boundary. For this reason, the following result can be seen as a generalization of the existence result for any volume obtained in \cite[Proposition K.1]{CarlottoChodoshEichmair} (see also \cite[Theorem 3.6]{BFSurvey}). Moreover, when 
in addition the isoperimetric profile diverges at infinity, such isoperimetric sets realize the isoperimetric mass in the sense of \eqref{eq:isomassconisoperimetrici}.
The isoperimetric profile diverges at infinity for instance when a global Euclidean-like isoperimetric inequality is in force. The latter happens, for example, when the manifold is $C^0$-asymptotically flat globally.

\begin{proposition}
\label{prop:profileincreasing}
Let $(M, g)$ be a $C^0$-Riemannian manifold that is $C^0_{\mathrm{loc}}$-asymptotic to $\mathbb R^3$, and such that $R_g\geq 0$ in the approximate sense on $M\setminus K$, where $K\subset M$ is a compact set. Assume  that its isoperimetric profile $I(v)$ is strictly increasing. Then, for every volume $v$, there exists an isoperimetric set $E_v$ of volume $v$ on $M$. If in addition $\lim_{v \to + \infty} I(v) = + \infty$, then
\begin{equation}
\label{eq:isomassconisoperimetrici}
    \mathfrak{m}_{\mathrm{iso}} = \limsup_{v \to +\infty} \frac{2}{P(E_v)}\left(|E_v|-\frac{P(E_v)^{3/2}}{6\sqrt{\pi}}\right).
\end{equation}
\end{proposition}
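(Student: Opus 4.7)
The plan is to split the proof into the two asserted statements, both established by invoking the asymptotic mass decomposition \cref{thm:MassDecompositionC0}, the reverse-isoperimetric construction \cref{prop:GrossiQuantoVoglio}, and the boundedness of the localized part of generalized isoperimetric minimizers provided by \cref{thm:BoundedIsoperimetricResidues}. The overall philosophy mirrors the contradiction step in the proof of \cref{thm:CorIsop}, where these ingredients were already combined; here one turns that step into a constructive existence statement and then reads off the mass identity.

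For existence of an isoperimetric set $E_v$ for every $v>0$, I would fix $v$ and pick a bounded minimizing sequence $(E_j)$ with $|E_j|=v$ and $P(E_j)\to I(v)$, using that the profile is finite (by comparison with Euclidean balls, cf.\ \cref{cor:Continuity}) and attained by bounded sets. Applying \cref{thm:MassDecompositionC0}, either a subsequence of $E_j$ converges in $L^1$ to an isoperimetric set of volume $v$, and we are done, or mass escapes to infinity: the bounded part converges to an isoperimetric set $E$ (possibly empty) with $|E|<v$, and
\[
|E|+|B|_{\mathrm{eu}}=v,\qquad I(v)=P(E)+P_{\mathrm{eu}}(B),
\]
for some Euclidean ball $B\subset\R^3$ of positive volume. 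By \cref{thm:BoundedIsoperimetricResidues}, $E$ is bounded; picking $K'$ compact and containing both $K$ and a large ball around $E$, and applying \cref{prop:GrossiQuantoVoglio} with $\mathscr{P}=P_{\mathrm{eu}}(B)$, I obtain $F\subset\subset M\setminus K'$ with $P(F)\le P_{\mathrm{eu}}(B)$ and $|F|\ge |B|_{\mathrm{eu}}$. Since $F$ is disjoint from $E$ we get $|E\cup F|\ge v$ and $P(E\cup F)\le I(v)$, and the chain
\[
I(v)\le I(|E\cup F|)\le P(E\cup F)\le I(v)
\]
forces, by strict monotonicity of $I$, the equalities $|E\cup F|=v$ and $P(E\cup F)=I(v)$, so $E_v\eqdef E\cup F$ is isoperimetric of volume $v$.

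For the identification of $\mathfrak{m}_{\mathrm{iso}}$ I would establish both inequalities. The $\ge$ direction is immediate: since $P(E_{v_j})=I(v_j)\to+\infty$ for any $v_j\to+\infty$, the family $(E_{v_j})$ is admissible in \cref{def:IsopMass}, so taking the supremum over such sequences yields $\mathfrak{m}_{\mathrm{iso}}\ge\limsup_{v\to+\infty}\tfrac{2}{P(E_v)}\bigl(|E_v|-\tfrac{P(E_v)^{3/2}}{6\sqrt{\pi}}\bigr)$. For the $\le$ direction, given any admissible sequence $(\Omega_j)$ with $P(\Omega_j)\to+\infty$, if $|\Omega_j|$ remains bounded along a subsequence then the associated ratio tends to $-\infty$ along that subsequence and contributes nothing. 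Otherwise, setting $v_j\eqdef|\Omega_j|\to+\infty$, the isoperimetric property of $E_{v_j}$ yields $P(\Omega_j)\ge I(v_j)=P(E_{v_j})$ with $|\Omega_j|=|E_{v_j}|$; moreover, for each fixed $V\ge 0$ the function
\[
p\longmapsto \frac{2}{p}\left(V-\frac{p^{3/2}}{6\sqrt{\pi}}\right)=\frac{2V}{p}-\frac{\sqrt{p}}{3\sqrt{\pi}}
\]
is strictly decreasing in $p>0$, so
\[
\frac{2}{P(\Omega_j)}\left(|\Omega_j|-\frac{P(\Omega_j)^{3/2}}{6\sqrt{\pi}}\right)\le \frac{2}{P(E_{v_j})}\left(|E_{v_j}|-\frac{P(E_{v_j})^{3/2}}{6\sqrt{\pi}}\right),
\]
and passing to the $\limsup$ closes the bound.

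The main subtle point I anticipate is the strict-monotonicity step in Part 1: it is precisely the strict (rather than only non-strict) monotonicity of $I$ that converts $|E\cup F|\ge v$ into $|E\cup F|=v$, without which one would only produce an isoperimetric set of some volume $\ge v$. The role of \cref{thm:BoundedIsoperimetricResidues} is to make sure $E$ is bounded so that, together with the freedom in choosing $K'$ in \cref{prop:GrossiQuantoVoglio}, one can realize $E\cap F=\emptyset$ and add perimeters and volumes. Part 2 is then essentially a calculus check once the monotonicity of the perimeter-deficit ratio is recorded.
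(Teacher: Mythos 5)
Your proposal is correct and follows essentially the same route as the paper: the asymptotic mass decomposition of \cref{thm:MassDecompositionC0} combined with \cref{thm:BoundedIsoperimetricResidues} and the set $F$ from \cref{prop:GrossiQuantoVoglio}, with strict monotonicity of $I$ forcing $E\cup F$ to be isoperimetric of volume exactly $v$, and then the monotonicity of the perimeter-deficit ratio for the mass identity. The only differences are cosmetic (you phrase the existence step constructively rather than by contradiction, and you spell out the bounded-volume subsequence case and the monotonicity in $p$, which the paper leaves implicit).
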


\begin{proof}
We assume by contradiction that, for some $v > 0$, there exists no isoperimetric sets of volume $v$. By \cref{thm:MassDecompositionC0}, we have $I(v) = P(E) + P_{\rm eu} (B)$ for a possibly empty $E \subset M$ realizing $P(E) = I(|E|)$ and $B \subset \mathbb{R}^3$ a nonempty ball, such that $|E| + |B|_{\rm eu} = v$.  
The proof of \cref{thm:CorIsop} shows that we can find a set $F \subset\subset M\setminus \overline{B}_{r+1}(o)$, for some ball such that $E\subset B_r(o)$, with $P(F) \leq P_{\rm eu} (B)$ and with volume $|F| \geq |B|_{\rm eu}$. If $|F| = |B|_{\rm eu}$, then $E \cup F$ is an isoperimetric set of volume $v$, giving a contradiction. If $\abs{F} > \abs{B}_{\rm eu}$, we derive a contradiction with strict monotonicity of $I$. Indeed, on the one hand we would have
\[
I(\abs{E} + \abs{F}) > I(\abs{E} + |B|_{\rm eu}) = I(v),
\]
and on the other hand there holds
\[
I(\abs{E} + \abs{F}) \leq P(E) + P(F) \leq P(E) + P_{\rm eu}(B) = I(v).
\]
This proves the existence of isoperimetric sets of any volume.

We are left to prove \eqref{eq:isomassconisoperimetrici}. 
Observe that, since the isoperimetric profile diverges at infinity, the isoperimetric sets $E_v$ have perimeter diverging to infinity as $v \to +\infty$, and thus they are valid competitors in the definition \eqref{eq:isomass} of $\mathfrak{m}_{\mathrm{iso}}$. Hence
\[
{\mathfrak{m}}_{\mathrm{iso}}  \geq \limsup_{v \to +\infty} \frac{2}{P(E_v)}\left(|E_v|-\frac{P(E_v)^{3/2}}{6\sqrt{\pi}}\right).
\]
On the other hand, let $(F_j)_{j \in \N}$ be any other sequence of finite perimeter sets such that $P(F_j) \to +\infty$. Let $E_j$ be an isoperimetric set of volume $V_j = \abs{F_j}$. Then, the sequence $(E_{j})_{j \in \N}$ satisfies
\[
\frac{2}{P(F_j)}\left(|F_j|-\frac{P(F_j)^{3/2}}{6\sqrt{\pi}}\right) \leq \frac{2}{P(E_{j})}\left(|E_{j}|-\frac{P(E_j)^{3/2}}{6\sqrt{\pi}}\right),
\]
implying \eqref{eq:isomassconisoperimetrici}.
\end{proof}
We close this section with the proof of \cref{thm:quasilocalrigidity}.

\begin{proof}[Proof of \cref{thm:quasilocalrigidity}]
Let $o \in M$ and $B_R(o) \Subset \Omega$ with $H_1(B_R(o), \mathbb{Z}) = \{0\}$. Let $w$ be given by \cref{thm:existenceimcfisoplocalized}. By \eqref{eq:quasilocalrigidity}, any set compactly contained in $\Omega$ satisfies the Euclidean isoperimetric inequality. Hence \cref{lem:AlmostShi} implies that \eqref{eq:quasishi} holds with $\delta = 0$ and with the equality sign for all $t \in (-\infty, T)$ for some $T$, i.e., sublevel sets of the local weak IMCF from $o$ achieve equality in the Euclidean isoperimetric inequality.

Going back to the proof of \cref{lem:AlmostShi}, it follows that all the inequalities in \eqref{eq:chainreverse} are equalities for all $t \in (-\infty, T)$. This implies that the Hawking mass $\mathfrak{m}_H(\partial E_t) $ vanishes for all $t \in (-\infty, T)$. At this point, we can conclude following the argument in \cite[pp. 422-424]{HuiskenIlmanen}. For the reader's convenience, we briefly include the line of reasoning.

Using that the derivative of the Hawking mass \cite[Geroch Monotonicity Formula 5.8]{HuiskenIlmanen} vanishes (see also \cite[Theorem 5.5]{BenattiPludaPozzetta} for the version obtained through $p$-harmonic approximation, as in the present case), one gets that $\partial E_t$ is a smooth totally umbilical hypersurface with constant mean curvature for any $t \in (-\infty,T)$. Also, the hypersurfaces $\partial E_t$ are not minimal since $\int_{\partial E_t} H^2 = 16\pi$ as $\mathfrak{m}_H(\partial E_t)=0$. One can also deduce from the latter that the flow does not jump.

It follows that the hypersurfaces $\partial E_t$ yield a smooth foliation of a neighborhood of $o$. Denoting by $H(t)>0$ the constant mean curvature of $\partial E_t$, we can rewrite the metric $g$ on $\{w < T\}$ as $g|_{\{w < T\}} = H^{-2}(t)\mathrm{d} t \otimes \mathrm{d}t + g_{\partial E_t}$. Differentiating $g_{\partial E_t}$ with respect to $t$ and exploiting the total umbilicity of $\partial E_t$, the explicit value of $H^2(t)$, and the Euclidean asymptotic behavior of $g$ close to $o$, one obtains that $(\{w < T\},  g)$ is isometric to $\left( [0, S) \times \mathbb{S}^2, \mathrm{d}s \otimes \mathrm{d}s + r^2 g_{\mathbb{S}^2} \right)$, for some $S>0$, where $g_{\mathbb{S}^2}$ is the standard round metric on the sphere. Thus, $(\{w<T\},g)$ is flat, as desired. We refer the reader also to \cite[Proof of Theorem 1.2]{benatti2022minkowski} for additional details on this computation.
\end{proof}

\appendix

\section{Auxiliary results}

In this appendix we collect two technical results we used in the paper.

\begin{lemma}\label{lem:CoareaContinuousManifold}
Let $(M, g)$ be an $n$-dimensional complete $C^0$-Riemannian manifold. Then the following hold.
\begin{itemize}
    \item Let $f \in \Lip_{\rm loc}(M)$. Then $\lip f = |\nabla f|$ almost everywhere.

    \item Let $\Omega\subset M$ be an open set and suppose that $g_i$ is a sequence of smooth Riemannian metrics converging to $g$ uniformly on $\Omega$. Then for any $f \in L^1_{\rm loc}(\Omega, g)$ there holds
    \[
    \lim_i |D f|_i(\Omega) = |D f|(\Omega),
    \]
    where $|D f|_i$ denotes the total variation of $f$ as a function in $L^1_{\rm loc}(\Omega, g_i)$.

    \item For any $f \in \Lip_{\rm loc}(M)$, there holds $|D f|   =\lip f \,\haus^n = |\nabla f | \, \haus^n$. In particular
    \[
    |D \dist_{x_0} | = \haus^n,
    \]
    for any $x_0 \in M$, where $\dist_{x_0}$ denotes distance from $x_0$.
\end{itemize}
\end{lemma}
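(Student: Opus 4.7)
The plan is to establish the three items sequentially, with item (3) relying on items (1) and (2). For item (1), I would argue at a generic $\haus^n$-a.e. point. Fix $f\in\Lip_{\rm loc}(M)$ and $x_0\in M$, and choose a local chart $\psi$ around $x_0$ with $\psi(x_0)=0$ and, after a linear change of coordinates, $g_{ij}(0)=\delta_{ij}$. The composition $f\circ\psi^{-1}$ is locally Lipschitz in the Euclidean sense on $\psi(U)$, so by classical Rademacher it is differentiable at $\mathcal{L}^n$-a.e.\ point; this transfers to $\haus^n$-a.e.\ $x_0\in M$ since $\haus^n$ and $\mathcal{L}^n$ are mutually absolutely continuous in the chart (with density $\sqrt{\det g}$). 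By the second part of \cref{lem:LocalePiattezzaC0metrics}, combined with continuity of $g$ at $x_0$, for every $\delta>0$ one finds a ball on which $\dist(x_0,\cdot)$ is $(1+\delta)$-biLipschitz to $|\psi(\cdot)|_{\rm eu}$. At a differentiability point this yields
\[
(1+\delta)^{-1}|\nabla_{\rm eu}(f\circ\psi^{-1})(0)|\le \lip f(x_0)\le (1+\delta)|\nabla_{\rm eu}(f\circ\psi^{-1})(0)|,
\]
and under the normalization $g_{ij}(0)=\delta_{ij}$ the Euclidean gradient at the origin coincides with $|\nabla f|_g(x_0)$. Letting $\delta\to 0$ gives item (1).

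For item (2), the uniform convergence $g_i\to g$ on $\Omega$ produces errors $\eta_i\downarrow0$ with $(1-\eta_i)g\le g_i\le(1+\eta_i)g$ and $\sqrt{\det g_i}/\sqrt{\det g}\in[1-\eta_i,1+\eta_i]$. In particular $L^1_{\rm loc}(\Omega,g)$ and $L^1_{\rm loc}(\Omega,g_i)$ agree as sets with equivalent topologies, and for every $\phi\in\Lip_{\rm loc}(\Omega)$ one has
\[
\int_\Omega|\nabla\phi|_{g_i}\de\vol_{g_i}\le (1+\eta_i)^2\int_\Omega|\nabla\phi|_g\de\vol_g.
\]
Plugging any competitor for $|Df|(\Omega)$ into the relaxed definition \eqref{eqn:VariazioneTotale} computed with respect to $g_i$ and taking the infimum yields $|Df|_i(\Omega)\le(1+\eta_i)^2|Df|(\Omega)$; the opposite inequality is entirely symmetric, so $|Df|_i(\Omega)\to|Df|(\Omega)$.

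For item (3), I would reduce to the smooth case via \cref{lem:ApprossimazioneC0Bilipschitz}: given any precompact open $\Omega\subset M$, approximate $g$ uniformly on $\Omega$ by smooth metrics $g_i$. In the smooth setting the identity $|Df|_i\res\Omega=|\nabla f|_{g_i}\haus^n_{g_i}\res\Omega$ for $f\in\Lip_{\rm loc}$ is standard (upper bound by taking $f$ itself as a competitor, lower bound via smoothing and lower semicontinuity). Item (2) gives $|Df|_i(\Omega)\to|Df|(\Omega)$, while uniform convergence of the metrics together with $f\in\Lip_{\rm loc}$ yields $\int_\Omega|\nabla f|_{g_i}\de\haus^n_{g_i}\to\int_\Omega|\nabla f|_g\de\haus^n$. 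Hence $|Df|(\Omega)=\int_\Omega|\nabla f|\de\haus^n$ on every precompact open set, and by Radon regularity the two measures coincide. Combined with item (1), $|Df|=\lip f\,\haus^n=|\nabla f|\,\haus^n$. The formula $|D\dist_{x_0}|=\haus^n$ then reduces to $\lip\dist_{x_0}=1$ $\haus^n$-a.e.: the upper bound follows from the triangle inequality, while for $y\neq x_0$ a minimizing geodesic $\gamma:[0,L]\to M$ from $x_0$ to $y$ (whose existence is guaranteed by Hopf--Rinow) provides points $z_n=\gamma(L-1/n)\to y$ with $|\dist(x_0,z_n)-\dist(x_0,y)|/\dist(y,z_n)=1$, so that $\lip \dist_{x_0}(y)\ge 1$ at every $y\ne x_0$. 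The main obstacle I expect is ensuring that the chart normalization in item (1) and the smoothing procedure in item (3) are carried out coherently, so that the pointwise identities and the measure-theoretic identities are compatible modulo $\haus^n$-negligible sets.
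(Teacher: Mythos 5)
Your proposal is correct, and items (2) and (3) follow essentially the same route as the paper: compare the relaxed total variations under the two-sided bound $(1\pm\eta_i)g\le g_i\le(1\pm\eta_i)g$ to get item (2), then pass the smooth identity $|Df|_i=|\nabla f|_{g_i}\,\haus^n_{g_i}$ to the limit on small balls and localize. The only genuine divergence is in item (1): you argue directly in a chart normalized so that $g_{ij}(x_0)=\delta_{ij}$, invoke classical Rademacher for $f\circ\psi^{-1}$, and use the $(1+\delta)$-biLipschitz chart of \cref{lem:LocalePiattezzaC0metrics} to identify $\lip f(x_0)$ with the Euclidean slope and hence with $|\nabla f|_g(x_0)$ at each differentiability point. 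The paper instead approximates $g$ by smooth metrics $g_i$ near $x$, shows $\lip_i f(x)\to\lip f(x)$ by comparing the induced distances, and then uses the smooth-case identity $\lip_i f=|\nabla^{g_i}f|_{g_i}$ a.e.; your version is somewhat more self-contained since it does not route through the smooth statement. Your concern about coherence of the normalizations is not an obstacle: differentiability a.e.\ is a chart-independent property, so the pointwise identity of item (1) holds on a fixed full-measure set and is compatible with the measure identity of item (3). Finally, your explicit verification that $\lip\dist_{x_0}\equiv1$ away from $x_0$ (triangle inequality for $\le$, points along a minimizing geodesic for $\ge$) supplies a detail the paper leaves implicit.
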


\begin{proof}
Let $f \in \Lip_{\rm loc}(M)$. Recall that the identity $\lip f = |\nabla f|$ almost everywhere readily follows on smooth Riemannian manifolds exploiting the exponential map. Fix $x \in M$ and let $g_i$ be a sequence of smooth Riemannian metrics converging to $g$ in $C^0$-sense on a neighborhood $A$ of $x$. We can write $(1-\eps_i)^2 g(v,v) \le g_i(v,v) \le (1+\eps_i)^2 g(v,v) $ for any tangent vector $v$ on $A$, for some $\eps_i\to0$. Denote by $\dist_i$ the distance function on $(A,g_i)$ defined by taking infimum of lengths of curves contained in $A$. For any $i$, let $y_j\in A$ such that $\lim_j\dist_i(x,y_j)=0$ and
\[
{\rm lip}_i f(x) = \lim_j \frac{|f(x)-f(y_j)|}{\dist_i(x,y_j)},
\]
where ${\rm lip}_i f$ denotes the slope of $f$ as a function in $(A, g_i)$.
For $j$ large, $\dist_i(x,y_j)$ is realized by a curve contained in $\Omega$. Hence $\dist_i(x,y_j) \ge (1-\eps_i) \dist(x,y_j)$ for any $j$ large. Thus 
\[
{\rm lip} f(x) \ge \limsup_j \frac{|f(x)-f(y_j)|}{\dist(x,y_j)} \ge  (1-\eps_i) \lim_j \frac{|f(x)-f(y_j)|}{\dist_i(x,y_j)} = (1-\eps_i) {\rm lip}_i f(x).
\]
A symmetric argument implies that $\lim_i \lip_i f(x) = \lip f(x)$. Since $\lip_i f= |\nabla^{g_i} f|_{g_i}$ almost everywhere, and norms of gradients clearly pass to the limit, then $\lip f(x) = \lim_i |\nabla^{g_i} f|_{g_i} = |\nabla f|$ almost everywhere.

Let now $f, g_i$ be as in the second item.
We can write again that $(1-\eps_i)^2 g(v,v) \le g_i(v,v) \le (1+\eps_i)^2 g(v,v) $ for any tangent vector $v$ on $\Omega$, for some $\eps_i\to0$. 
Denote by $\dist_i$ the distance function on $(\Omega,g_i)$ defined by taking infimum of lengths of curves contained in $\Omega$ and by $|D (\cdot)|_i$ the corresponding total variation.
Let $f_k\in \Lip_{\rm loc}(\Omega)$ be a sequence converging to $f$ in $L^1_{\rm loc}$ on $(\Omega,g)$ such that $|D f|(\Omega) = \lim_k \int_\Omega {\rm lip} f_k$.
As before, one estimates
\[
{\rm lip} f_k(x) \ge (1-\eps_i) {\rm lip}_i f_k(x).
\]
Therefore
\[
|D f|(\Omega) = \lim_k \int_\Omega {\rm lip} f_k  \ge (1-\tilde\eps_i) \liminf_k \int_\Omega {\rm lip}_i f_k \de \haus^n_{g_i} \ge (1-\tilde\eps_i)  |D f|_i(\Omega),
\]
for suitable $\tilde\eps_i\to 0$. Hence $|D f|(\Omega) \ge \limsup_i |D f|_i(\Omega)$. An analogous argument shows that $|D f|(\Omega) \le \liminf_i |D f|_i(\Omega)$.

Now let $f \in \Lip_{\rm loc}(M)$. For any $x_0 \in M$ there exist $r_0>0$ and a sequence of smooth metrics $g_i$ on $B_{r_0}(x_0)$ uniformly converging to $g$ on $B_{r_0}(x_0)$. The above argument also shows that ${\rm lip}_i f \to {\rm lip} f$ pointwise on $B_{r_0}(x_0)$, hence in $L^1_{\rm loc}$ on $(B_{r_0}(x_0), g)$, being $f$ locally Lipschitz. Using the second item, and since $|Df|_i={\rm lip}_i f \, \haus^n_{g_i}$ we have
\[
|D f|(B_{r_0}(x_0)) = \lim_i \int_{B_{r_0}(x_0)} {\rm lip}_i f \de \haus^n_{g_i} = \int_{B_{r_0}(x_0)} {\rm lip} f \de \haus^n.
\]
Taking into account the first item, the third one follows as well.
\end{proof}

Boundedness of isoperimetric sets usually follows from a suitable deformation lemma, see e.g. \cite[Theorem B.1]{AFP21}. It is not completely clear how to work out such an argument in the $C^0$-setting, yet one can slightly modify the argument, taking advantage of the asymptotic behavior imposed, to obtain the following weaker version, that still suffices for our aims.

\begin{theorem}\label{thm:BoundedIsoperimetricResidues}
  Let $(M, g)$ be an $n$-dimensional complete $C^0$-Riemannian manifold that is $C^0_{\mathrm{loc}}$-asymptotic to $\mathbb R^n$. Assume that for some $V > 0$ there exist $E\subset M$ and a nonempty Euclidean ball $B \subset \R^n$ such that $\abs{E} + \abs{B}_{\mathrm{eu}} = V$ and $I(V) = P(E) + P_{\mathrm{eu}}(B)$. Then $E$ has a bounded representative.
\end{theorem}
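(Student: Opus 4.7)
The plan is to argue by contradiction. Fix $o \in M$ and set $m(r) \eqdef |E \setminus \overline{B_r(o)}|$. If $E$ admits no bounded representative, then $m(r) > 0$ for every $r > 0$ while $m(r) \to 0$ as $r \to \infty$. For almost every $r > 0$ I will build a competitor of total volume $V$, using perimeter $P(E \cap B_r(o))$ inside a large compact set, and perimeter $P_{\mathrm{eu}}(B_r')$ outside of it, where $B_r' \subset \mathbb R^n$ is the Euclidean ball with $|B_r'|_{\mathrm{eu}} = |B|_{\mathrm{eu}} + m(r)$. Arguing as in the upper bound for the isoperimetric profile in the proof of \cref{cor:Continuity}, the $C^0_{\mathrm{loc}}$-asymptotic hypothesis lets one realize a set with the same volume and perimeter as $B_r'$ at infinity in $M$ through $(1+\varepsilon)$-biLipschitz charts centered at a diverging sequence of points, yielding
\[
I(V) \leq P(E \cap B_r(o)) + P_{\mathrm{eu}}(B_r').
\]
Combining with $I(V) = P(E) + P_{\mathrm{eu}}(B)$ and the splitting identity
\[
P(E \cap B_r(o)) + P(E \setminus \overline{B_r(o)}) = P(E) + 2\haus^{n-1}(E^{(1)} \cap \partial B_r(o)),
\]
valid for almost every $r$, one obtains
\[
P(E \setminus \overline{B_r(o)}) \leq 2 \haus^{n-1}(E^{(1)} \cap \partial B_r(o)) + \bigl[P_{\mathrm{eu}}(B_r') - P_{\mathrm{eu}}(B)\bigr] \leq 2 \haus^{n-1}(E^{(1)} \cap \partial B_r(o)) + C m(r),
\]
where $C = C(|B|_{\mathrm{eu}}, n)$, thanks to the $C^1$-regularity of $v \mapsto n\omega_n^{1/n} v^{(n-1)/n}$ near $v = |B|_{\mathrm{eu}} > 0$.

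The next step is to lower bound $P(E \setminus \overline{B_r(o)})$ by a dimensional multiple of $m(r)^{(n-1)/n}$. Using that $M$ is $C^0_{\mathrm{loc}}$-asymptotic to $\mathbb R^n$, I fix $R_0$ so large that for every $x \in M \setminus B_{R_0}(o)$ the ball $B_2(x)$ is $2$-biLipschitz to a Euclidean $2$-ball, and I cover $M \setminus B_{R_0+2}(o)$ by a family $\{B_1(x_j)\}_{j \in \mathbb N}$ with bounded overlap $N$. For $r$ sufficiently large, $m(r)$ is so small that $|E \cap B_1(x_j)| \leq m(r) \ll |B_1(x_j)|$, and the classical Euclidean relative isoperimetric inequality in the unit ball (transferred via the biLipschitz maps, using \cref{lem:MetricheVicineImplicaBilipschitz}) yields
\[
P(E, B_1(x_j)) \geq c_n |E \cap B_1(x_j)|^{(n-1)/n},
\]
uniformly in $j$, for a purely dimensional $c_n>0$. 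Summing over $j$ and exploiting the elementary inequality $\sum_j a_j^{(n-1)/n} \geq \bigl(\sum_j a_j\bigr)^{(n-1)/n}$ for nonnegative $a_j$ (since $(n-1)/n < 1$), we obtain
\[
P(E \setminus \overline{B_r(o)}) \geq P(E, M \setminus \overline{B_r(o)}) \geq \tilde c \, m(r)^{(n-1)/n}
\]
for $r$ larger than some $r_0$, with $\tilde c = \tilde c(n, N) > 0$.

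Combining the two bounds and absorbing the linear term $C m(r)$ into the dominant $m(r)^{(n-1)/n}$ for $r$ large (where $m(r)$ is small), we reach
\[
\tfrac{\tilde c}{2} m(r)^{(n-1)/n} \leq 2 \haus^{n-1}(E^{(1)} \cap \partial B_r(o)).
\]
Applying the coarea formula \cref{thm:Coarea} to $\dist_o$, and using that $|D \dist_o| = \haus^n$ from \cref{lem:CoareaContinuousManifold}, yields $-m'(r) = \haus^{n-1}(E^{(1)} \cap \partial B_r(o))$ for almost every $r$. The resulting differential inequality $-m'(r) \geq \tfrac{\tilde c}{4} m(r)^{(n-1)/n}$ integrates to
\[
m(r)^{1/n} \leq m(r_0)^{1/n} - \tfrac{\tilde c}{4n}(r - r_0),
\]
which becomes negative for $r$ large enough, contradicting $m(r) > 0$.

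The main obstacle I expect is the sum-over-balls step in the lower bound of $P(E \setminus \overline{B_r(o)})$: it requires a Euclidean-like relative isoperimetric inequality with a \emph{uniform} constant on a family of small balls in the asymptotic region, which should be available via a careful application of \cref{lem:MetricheVicineImplicaBilipschitz} together with the $C^0_{\mathrm{loc}}$-asymptotic hypothesis to transfer classical Euclidean inequalities. A subsidiary technicality is the exact-volume construction of the competitor $B_r'$ at infinity in $M$, which can be dealt with by perturbing the radius of the approximating ball, exactly as in \cref{cor:Continuity} and \cref{thm:MassDecompositionC0}.
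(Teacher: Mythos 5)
Your proposal is correct and follows essentially the same strategy as the paper's proof: cut $E$ at radius $r$, absorb the lost volume $m(r)$ into an enlarged Euclidean ball placed at infinity (costing only $Cm(r)$ in perimeter), and close a differential inequality for $m(r)$ using a Euclidean-type isoperimetric lower bound on the residue $E\setminus \overline{B}_r(o)$, exactly as in the ODE argument of \cite[Theorem B.1]{AFP21} that the paper invokes. The only real divergence is in how the lower bound $P(E\setminus\overline{B}_r(o))\gtrsim m(r)^{(n-1)/n}$ is obtained: the paper applies the global isoperimetric inequality for small volumes ($I(v)\ge \xi v^{(n-1)/n}$ for $v$ small, via \cite[Proposition 3.20]{APP} and the uniform lower volume bound on balls), whereas you reprove it by hand through a bounded-overlap covering of the asymptotic region by uniformly biLipschitz-Euclidean balls and the concavity inequality $\sum_j a_j^{(n-1)/n}\ge(\sum_j a_j)^{(n-1)/n}$. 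Both routes work; yours is more self-contained, the paper's is shorter. One small imprecision in your covering step: you apply the relative isoperimetric inequality to $E$ in $B_1(x_j)$ and claim $|E\cap B_1(x_j)|\le m(r)$, which fails for balls straddling $\partial B_r(o)$ where $E\cap B_r(o)$ may fill most of $B_1(x_j)$; you should instead apply it to the set $E\setminus\overline{B}_r(o)$ itself (of finite perimeter for a.e.\ $r$), for which the volume fraction in every covering ball is indeed at most $m(r)$, and whose perimeter is exactly the quantity you are bounding. With that adjustment the argument is complete.
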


\begin{proof}
Without loss of generality, we can assume that $E$ has positive volume. Indeed, if $|E|=0$, then $\emptyset$ is a bounded representative.
Take $p\in M$. 
Let $V(r) = |{E\setminus B_r(p)}|$. Call $A(r) = P(E, M \setminus B_r(p))$. By coarea (see \cref{thm:Coarea}), and since by \cref{lem:CoareaContinuousManifold} we have $|D\dist_p|=\haus^n$, notice that $V'(r) = - P(B_r(p),E)$. 
Moreover, a Euclidean-like isoperimetric inequality holds for small volumes. Indeed, the proof of \cref{cor:C0locasymptoticPi} (compare also with item (1) in \cref{lem:TechnicalLemmaC0Manifold}) shows that there is a constant $\zeta$ such that $|B_r(p)|\geq \zeta r^3$ for every $r\in (0,1]$, and every $p\in M$. Thus \cite[Proposition 3.20]{APP} gives that  $I(v)\geq \xi v^{2/3}$ for every $v\in (0,\eta)$ for some $\eta,\xi>0$. Now arguing verbatim as after \cite[Equation (B.1)]{AFP21}, we can reduce ourselves to show that, for some constant $C$, there holds
\begin{equation}
\label{eq:claim}
A(r) \leq - V'(r) + CV(r),
\end{equation}   
for almost all large enough $r$. Assuming by contradiction that $V(r)>0$ for any $r>0$, the latter would result in a contradiction by ODE comparison on $V(r)$, see \cite[Theorem B.1]{AFP21}. To this aim, let $B^r$ be a smooth deformation of $B \subset\R^n$ of volume $\abs{B} + V(r)$ such that
\begin{equation}
\label{eq:relazione2}
    P_{\mathrm{eu}}(B^r) \leq P_{\mathrm{eu}}(B) + C V(r),
\end{equation}
where $C$ only depends on $B$.
Notice it is enough to take $B^r$ to be a ball containing $B$.
 Then,
 $\abs{E\cap B_r(p)} + \abs{B^r}_{\mathrm{eu}} = V$ and thus, by using the analogue of \cite[Proposition 3.2]{AFP21}, which can be proved analogously as in \cite{AFP21} (again exploiting that, arguing as in \cite[Lemma 2.17]{AFP21}, the isoperimetric profile is achieved by bounded sets), we get
\begin{equation}\label{eq:relazione}
    P(E) + P_{\mathrm{eu}}(B) = I(V) \le I( |E|- V(r)) + I_{\R^n}(|B|+V(r)) \le P(E\cap B_r(p)) + P_{\mathrm{eu}}(B^r).
\end{equation}
On the other hand, for almost every radius, we have (see \cite[Proposition 2.6]{APPV23}, which can be applied thanks to \cref{cor:C0locasymptoticPi}) that
\begin{equation}
\label{eq:relazione1}
    P(E\cap B_r(p)) \leq P(E) - P(E, M\setminus B_r(p)) + P(B_r(p),E).
\end{equation}
Plugging \eqref{eq:relazione2} and \eqref{eq:relazione1} into \eqref{eq:relazione}, we are left with
\eqref{eq:claim}. So the proof is concluded.
\end{proof}

\bigskip

\noindent \textbf{Data availability statement.} Not applicable.

\bigskip

\noindent \textbf{Conflicts of interest.} The authors declare no conflicts of interest.

\printbibliography

\end{document}